\newcommand*\linenomathpatch[1]{%
  \cspreto{#1}{\linenomath}%
  \cspreto{#1*}{\linenomath}%
  \csappto{end#1}{\endlinenomath}%
  \csappto{end#1*}{\endlinenomath}%
}
\newcommand*\linenomathpatchAMS[1]{%
  \cspreto{#1}{\linenomathAMS}%
  \cspreto{#1*}{\linenomathAMS}%
  \csappto{end#1}{\endlinenomath}%
  \csappto{end#1*}{\endlinenomath}%
}
  \let\linenomathAMS\linenomathWithnumbers
  \patchcmd\linenomathAMS{\advance\postdisplaypenalty\linenopenalty}{}{}{}
  \let\linenomathAMS\linenomathNonumbers
\newtheorem{theorem}{Theorem}[section]
\newtheorem{lem}[theorem]{Lemma}
\newtheorem{assumption}{Assumption}[section]
\newtheorem{defn}{Definition}[section]
\newtheorem{prop}{Proposition}[section]
 \newcommand{\intR}{\int_{\mathbb{R}^2}}
 \newcommand{\dr}{\mathrm{d}\rho}
 \newcommand{\dR}{\mathrm{d}R}
 \DeclareMathOperator{\sgn}{sign}
 \definecolor{darkblue}{rgb}{0,0,0.7}
 \definecolor{lightblue}{rgb}{0,0.4,0.7}
\title{Steady states of an Elo-type rating model\\ for players of varying strength}
\author[1]{Bertram D\"uring}
\author[1]{Josephine Evans}
\author[1]{Marie-Therese Wolfram}
\affil[1]{Mathematics Institute, University of Warwick, Coventry CV4 7AL, United Kingdom}
\date{}
\begin{document}

\maketitle

\begin{abstract}
    \noindent In this paper we study the long-time behaviour of a kinetic formulation of an Elo-type rating model for a large number of interacting players with variable strength. The model results in a non-linear mean-field Fokker-Planck equation and we show the existence of steady states via a Schauder fixed point argument. Our proof relies on the study of a related linear equation using hypocoercivity techniques.
\end{abstract}

\noindent \textbf{Keywords:} Nonlinear Fokker-Planck equation, Elo rating model, kinetic model, steady states, hypocoercivity\\
\noindent \textbf{Mathematics Subject Classification:} 35Q84, 35B40, 47D07, 35Q91, 35K15\\

\section{Introduction}

In 2015 Jabin and Junca \cite{JJ2015} introduced a kinetic version of the Elo rating model for two player zero sum games. The Elo model was originally introduced by the Hungarian physicist Arpad Elo to rank chess players, but variants of it are nowadays used in bingo, football, basketball and American football. In this model players are characterised by their strength $\rho$, which is an unobservable characteristic, and their rating $R$, which is observable. In the ideal situation the rating $R$ of a player converges to their strength $\rho$ over time as ratings are updated.

We start by recalling the kinetic version of the Elo rating model \cite{JJ2015}. Consider $N\in\mathbb{N}$ players who participate in a sequence of two-player zero sum games. Each player is characterised by their  respective rating $R_\ell$ (which is observable) and strength $\rho_\ell$ (which is unobservable), $\ell \in \lbrace 1,\dots, N \rbrace$. When two players $i,j\in\{1,\dots,N\}$ play a game, their ratings are updated after the encounter using the following binary interaction rule:
\begin{subequations}
\label{eq:orig_elo_inter}
\begin{align}
 R_i^* &= R_i + K\left(S_{ij} -b(R_i - R_j) \right),\\
 R_j^* &= R_j + K\left(-S_{ij} - b(R_j - R_i)\right),
\end{align}
\end{subequations}
where $K$ is a positive constant and $S_{ij}$ the outcome of the game. The random variable $S_{ij}$ takes values $\pm 1$ and, in average, outcomes are assumed to depend on the difference in the underlying strength, that is $\mathbb{E}(S_{ij}) = b(\rho_i - \rho_j)$. Note that this assumption can be generalised to include, for example, draws, then $S_{ij} \in \lbrace -1, 0, 1\rbrace$ or to consider continuous random variables $S_{ij}$ on the interval $[-1,1]$. The function $b$ is usually set to
\begin{align*}
    b(z) = \textrm{tanh} (c z) \quad \text{ with } c \in \mathbb{R}.
\end{align*}
In general $b$ is assumed to be an odd, sufficiently smooth function. Hence, in \eqref{eq:orig_elo_inter} the expected outcome of the game based on the difference in ratings, that is $b(R_i - R_j)$, is compared to the actual score and the ratings are adjusted accordingly. \\
\noindent Jabin and Junca showed that the distribution of players $f = f(\rho, R, t)$ satisfies the following Fokker-Planck equation in the quasi-invariant limit in \cite{JJ2015}:
\begin{align}\label{eq:fpe}
    \partial_t f(\rho, R, t) = \partial_R \left(a[f] f(\rho,R,t)\right)\quad \text{ in }\Omega\times(0,\infty),
\end{align}
with a given initial distribution $f(\rho, R, t) =f_0(\rho, R)$, and $\Omega \subseteq \mathbb{R}^2$. The operator $a$ is given by
\begin{align}
\label{eq:a}
    a[f] = \int_{\Omega} \left(b(\rho-\rho') -b(R-R')\right) f(r',\rho',t) \, \dr' \dR'.
\end{align}
We observe that the distribution of players with respect to their ratings is translated by the `velocity' $a$, which depends on the difference between the expected outcome and the actual score (integrated against the agent distribution).\\ 
Note that interactions \eqref{eq:orig_elo_inter} are translation-invariant on $\mathbb{R}$, since they depend on the difference between ratings only. Similarly, if the initial datum $f_0$ of the Fokker-Planck equation \eqref{eq:fpe} is shifted by constants $\rho_0$ and $R_0$ in $\mathbb{R}^+$, that is $g_0 = f_0(\rho + \rho_0, R+R_0)$, then $g(\rho, R, t) = f(\rho + \rho_0, R + R_0,t)$ is the solution to \eqref{eq:fpe}. Using energy arguments, Jabin and Junca \cite{JJ2015} show that solutions $f$ to \eqref{eq:fpe} concentrate on the diagonal as $t\rightarrow \infty$. Hence, the observable ratings $R$ are guaranteed to converge to the unobservable strengths $\rho$, giving justification for the validity of the kinetic Elo rating model in the many-agents and long-time limit.

Originating in statistical mechanics, in particular in rarefied gas dynamics, Boltzmann-type and Fokker-Planck-type equations (and other kinetic models) have found new applications in socio-economic applications in the past two decades, see \cite{ToscaniPareschi:ims} for an overview. Applications aside from Elo-type rating models \cite{JJ2015,DTW2019,duringwolframfischer2021elo} include wealth distribution in societies \cite{cordier2005kinetic,during2007hydrodynamics,During:economykineticmodel,during2018kinetic}, opinion formation \cite{Toscani:opinion,During:strongleaders,albi2017opinion,Degond2017,During:inhomogeneous,during2021kinetic}, compartmental epidemiology \cite{dimarco2021kinetic,albi2021control} and others.

A generalisation of the original Elo model \eqref{eq:fpe} with variable underlying strength $\rho$ was proposed and investigated by D\"uring et al.\ \cite{DTW2019}. Here the players' strength changes in encounters and is influenced by random fluctuations (introducing an additional diffusion term). We outline the details of the modelling in Section~\ref{s:modelling}, but note that the corresponding player distribution satisfies the following nonlinear Fokker-Planck equation,
\begin{align}
\label{eq:genfpe}
     \partial_t f(\rho, R, t) = \partial_R \left(a[f] f(\rho,R,t)\right) + \gamma \partial_{\rho} \left(a_1[f] f(\rho,R,t)\right) + \frac{\sigma^2}{2} \partial_{\rho}^2 f(\rho,R,t)\quad \text{ in }\Omega\times(0,\infty),
\end{align}
where $\gamma > 0$ and 
\begin{align*}
    a_1[f] = \int_{\Omega} b(\rho-\rho') f(\rho', r',t)\, \dr' \dR'.
\end{align*}
In \cite{DTW2019} existence of weak solutions to \eqref{eq:genfpe} was proved (for $\Omega \subset \mathbb{R}^2$) and numerical experiments illustrated the dynamics of the model. Some heuristic arguments on the long-time behaviour of solutions to \eqref{eq:genfpe} were given, but no rigorous analysis carried out.  In this paper we present an existence results of steady states to equation \eqref{eq:genfpe}. 
Since the diffusion part in \eqref{eq:genfpe} is singular,
the equation is degenerate parabolic. Degenerate Fokker-Planck
equations frequently, despite their lack of coercivity, exhibit
exponential convergence to equilibrium, a behaviour which has been
referred to by Villani as {\em hypocoercivity\/} \cite{Villani2009}. This was subsequently extended \cite{DMS} to a wide variety of kinetic equations.

For linear kinetic Fokker-Planck equations most existing quantitative equilibration results are confined to equations with explicit steady states, or even with linear (in the variables) drift terms, where the whole semigroup can be written explicitly. It is sometimes possible to work with non-explicit steady states as in \cite{CalvezRaoulSchmeiser}. This result requires precise bounds on the non-explicit steady state. The other option is to work in a perturbative setting around an equation where the steady state is know as in \cite{BouinHoffmannMouhot}. In the present situation of \eqref{eq:genfpe}, however, there is no obvious choice of equation to perturb around. The non-explicitness of the steady state encountered here is similar to the theory of non-equilibrium steady states in thermodynamics which appears in the kinetic theory of gasses, see the review article \cite{non-equilibrium} and references therein.

Linking the linear theory of hypocoercivity to non-linear equations is usually done through a linear stability analysis. This provides another motivation for finding the steady state as it gives us something to linearise around. The Vlasov Poisson Fokker-Planck equation is structurally similar to equation \eqref{eq:genfpe} studied here. The linearised problem for this equation is studied in \cite{VlasovPoissonFokker-Planck}. Often for non-linear kinetic equations the challenge in this is to link the (typically small) spaces in which we can study the linearised equations with (typically larger) spaces in which we expect the fully non-linear equation to be well-posed. This was done for the Boltzmann equation in \cite{MischlerMouhotGualdani}. As our equation is well-posed in spaces with exponential weights (due to the boundedness of the non-linear drift terms) we do not expect the same kind of problems to appear here. Studying the long-time behaviour of the fully non-linear equation \eqref{eq:genfpe} is in general very challenging. This problem is strongly linked to the problem of uniqueness for a steady state. We would need to have an entropy function which works for data far from the steady state. For the Boltzmann equation and the kinetic Fokker-Planck equation this role is played by the Boltzmann entropy. There are very few fully non-linear results giving conditional convergence to equilibrium results  \cite{DVFokkerPlanck,DVBoltz}. 

This paper is organised as follows: we discuss the underlying modeling assumption of the generalised Elo model proposed by D\"uring et al.\ \cite{DTW2019} in Section~\ref{s:modelling} and illustrate the dynamics of solutions with computational experiments in Section~\ref{s:numerics}. Section~\ref{s:existence_stat_sol} presents the main contribution of the paper -- the existence of steady solutions. We conclude by discussing the link to hypocoercivity and future research directions in Section~\ref{s:outlook}.

\section{The kinetic Elo rating system with learning effects}

In this section we recall the underlying modeling assumptions and present computational results, that motivate and guide the presented analytical results.

\subsection{Modelling}\label{s:modelling}
We start by discussing the generalisation of the kinetic model proposed by D\"uring et al.\  \cite{DTW2019}. We recall that the rating $R$ should ideally correspond to the underlying strength $\rho$, giving a way to rank and compare players, whose intrinsic strength is not observable.

While the original Elo model assumes that the underlying players' strength is constant, D\"uring et al.\ proposed that players' strength changes over time in various ways: (i) gain of strength by learning from encounters, which depends on the strength difference between the players; (ii) gain or loss of self-confidence due to winning or being defeated in a game; and (iii)
random performance fluctuations. In particular, they generalised the interaction rule \eqref{eq:orig_elo_inter} to
\begin{subequations}
\label{eq:elo_inter}
\begin{align}
 R_i^* &= R_i + K\left(S_{ij} -b(R_i - R_j) \right),\\
 R_j^* &= R_j + K\left(-S_{ij} - b(R_j - R_i)\right),\\
 \rho_i^* &= \rho_i + \gamma h(\rho_j - \rho_i) +  \eta,\\
 \rho_j^* &= \rho_j + \gamma h(\rho_i - \rho_j) + \tilde{\eta},
\end{align}
\end{subequations}
where $\eta$ and $\tilde{\eta}$ are independent and identically distributed random variables with mean zero and variance $\sigma^2$ (which account for daily fluctuations in the individual performance). The function $h$ models individual learning. D\"uring et al.\ proposed the following form (with parameters $\alpha,\beta\ge 0$)
\begin{align*}
    h(\rho_i - \rho_j) = \alpha h_1(\rho_i - \rho_j) + \beta h_2(\rho_i- \rho_j, S_{ij}),
    \end{align*}
where $h_1$ corresponds to the increase of strength and knowledge from encounters,
\begin{align}\label{eq:h1}
    h_1 (z) = 1+ b(z).
\end{align}
Since $h_1$ is positive, both players are able to learn and improve in each game, depending on the difference in strengths, and with a player with lower strength benefiting more. The function $h_2$ models the gain or loss of self-confidence if a player wins a game or is defeated in it -- it is either positive or negative depending on the actual outcome of the game and the expected outcome based on the difference in the players' strength. Then the distribution of players $f$ satisfies the following Fokker-Planck equation in the quasi-invariant limit:
\begin{align*}
     \partial_t f(\rho, R, t) = \partial_R \left(a[f] f(\rho,R,t)\right) + \partial_{\rho} \left(c[f] f(\rho,R,t)\right) + \frac{\sigma^2}{2} \partial_{\rho}^2 f(\rho,R,t)\quad \text{ in }\Omega\times(0,\infty),
\end{align*}
where we assumed that the initial data $f_0$ is normalised, the integral operator $a$ is given by \eqref{eq:a} and 
\begin{align}
    \label{eq:c}
    c[f] = \int_{\Omega} \left[\alpha h_1(\rho'-\rho) + \beta h_2(\rho'-\rho)\right]f(\rho', R', t) \, \mathrm{d}\rho' \dR'.
\end{align}
\noindent Since the positive function $h_1$ results in a continuous increase of the players' strength $\rho$ over time, D\"uring et al.\ studied a suitably shifted problem, which has a steady state. To this end they define
\begin{align*}
    H(\rho, R, t) = \alpha t,
\end{align*}
and the function 
\begin{align}
    g(\rho, R, t) :=  f(\rho + H(\rho, R, t), R, t).
\end{align}
Then the function $g$ satisfies the following Fokker-Planck equation
\begin{align}
\label{eq:g}
     \partial_t g(\rho, R, t) = \partial_R \left(a[g] g(\rho,R,t)\right) + \partial_{\rho} \left(\tilde{c}[g] g(\rho,R,t)\right) + \frac{\sigma^2}{2} \partial_{\rho}^2 g(\rho,R,t)\quad \text{ in }\Omega\times(0,\infty),
\end{align}
where 
\begin{align*}
    \tilde{c}[g] = \int_{\Omega} \left[ \alpha b(\rho'-\rho) + \beta h_2(\rho'-\rho)\right] g(\rho', R',t)\, \dr' \dR'.
\end{align*}
Equation \eqref{eq:genfpe} corresponds to \eqref{eq:g} with $\beta =0$, i.e.\ players improve their strength by participating in games, but do not loose or gain confidence due to wins and losses.
The presented analysis investigates the steady states of this suitably shifted variant of the Elo model.

\subsection{Numerical simulations}\label{s:numerics}

In the following we present several computational experiments which motivate and corroborate our analysis.

 We simulate \eqref{eq:genfpe} on the unit square with no-flux boundary conditions, using Strang splitting and an upwind finite volume discretisation in $\rho$ and $R$ proposed in \cite{DTW2019}.  The domain is discretised into squares of length $h = \frac{1}{800}$, the discrete time steps are set to $\Delta t = 2 \times 10^{-6}$. We assume that agents are initially uniformly distributed, hence $f(\rho, R, t=0) \equiv 1$.
 
 Figure \ref{fig:Ufinal} shows the computationally obtained steady state of players  $f_{\infty}$ for $\frac{\sigma^2}{2} = 0.05$  after $2\times 10^5$ time steps. We observe the formation of a smoothed peak  at $(\rho,R) = (0.5, 0.5)$ (the centre of mass). The smaller the diffusivity, the more concentrated $f_{\infty}$ (converging to the expected Delta Dirac steady state in the case $\sigma^2 = 0$).
\begin{figure}
    \centering
    \includegraphics[width=0.6\textwidth]{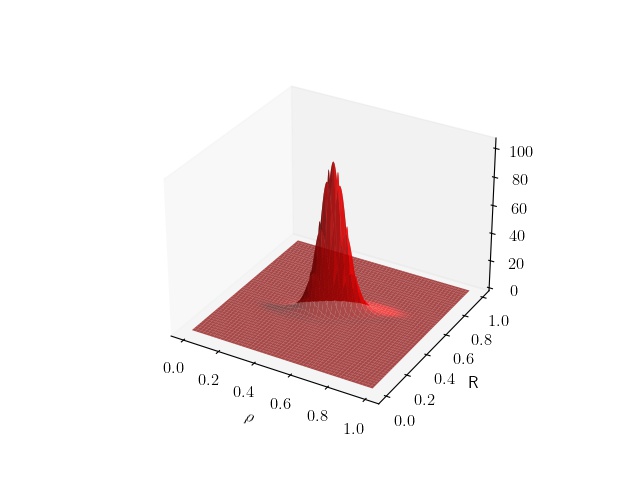}
    \caption{Steady state player distribution $f_{\infty}$.}
    \label{fig:Ufinal}
\end{figure}
Figure \ref{fig:rel_entropies} illustrates the evolution of different weighted relative energies, which we will investigate in Section \ref{s:existence_stat_sol}. In particular, we consider 
\begin{align}\label{e:energy}
    E(f; f_{\infty})=\intR \phi(\rho,R) \lvert f(\rho, R, t) - f_{\infty}(\rho,R)\rvert \, \dr \dR
\end{align}
with a weight function $\phi=\phi_\beta = \exp \left( \beta\sqrt{1+4\rho^2/\gamma + 2\rho R + \gamma R^2}\right)$
or $\phi = f^{-1}_{\infty}$. We note that $\phi_{\beta}$ will be used in the analysis later.
We compute the relative energies with respect to the computationally obtained steady state density $f_{\infty}$ (we see in Figure \ref{fig:rel_entropies} that the solution has indeed equilibrated).
\begin{figure}[ht!]
    \centering
\subfigure[Weighted relative energies for $\phi = \phi(\beta,\gamma)$.]{\includegraphics[width=0.4\textwidth]{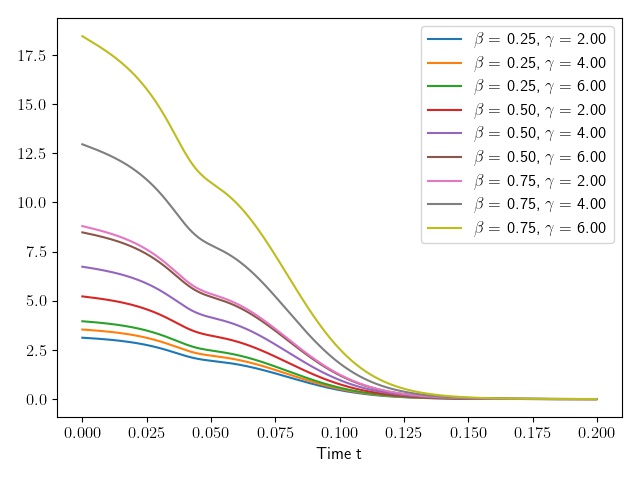}}
\hspace*{1em}
\subfigure[Weighted relative energy for $\phi = f^{-1}_{\infty}$]{\includegraphics[width=0.425\textwidth]{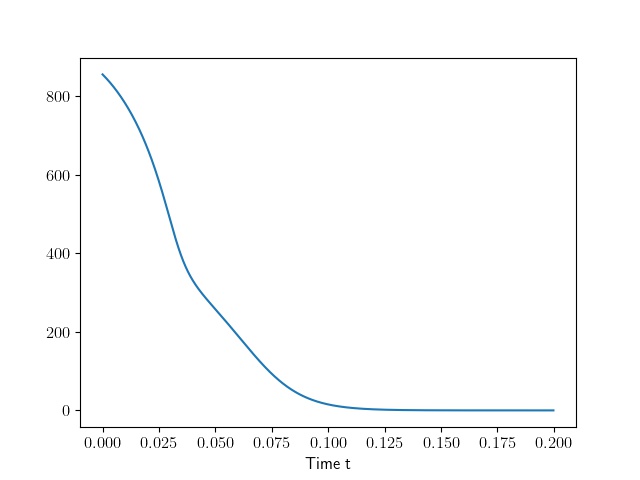}}
    \caption{Decay of the relative energy \eqref{e:energy} for different weights functions $\phi = \phi(\beta, \gamma)$ defined by \eqref{eq:definephi} and for $\phi = f_{\infty}^{-1}$.}
    \label{fig:rel_entropies}
\end{figure}
The evolution of \eqref{e:energy} shows a staircase like decay -- a phenomena often observed in linear kinetic Fokker Planck equations \cite{arnoldrevista} and the Boltzmann equation \cite{FilbetMouhotPareschi}.


\section{Existence of steady states}
\label{s:existence_stat_sol}

\subsection{Main Results}

The previous discussion motivates the main result of this paper -- an existence proof of steady states of the following equation:
\begin{equation} \label{eq:nlmain}
    \partial_t f = -\partial_R \left(a[f]f\right) + \partial_\rho \left( \frac{\sigma^2}{2} \partial_\rho f + \gamma a_1[f]f \right)\quad \text{ in }\mathbb{R}^2\times(0,\infty),
\end{equation}
with $a[\mu](\rho, R) = a_1[\mu](\rho) - a_2[\mu](R)$, where
\begin{align*}
    a_1[\mu](\rho) &= \intR b(\rho-\rho')\mu(\rho',R')\,\mathrm{d}\rho' \mathrm{d}R',\\
    a_2[\mu](R) &= \intR b(R-R') \mu(\rho', R')\,\mathrm{d}\rho' \mathrm{d}R'.
\end{align*}  
Equation \eqref{eq:nlmain} is supplemented with initial condition $f(\rho,R,0) = f_0(\rho,R)$ with $\intR f_0\, \dr \dR = 1$. Note that \eqref{eq:nlmain} corresponds to \eqref{eq:g} with $\beta=0$ and $\alpha = \gamma$.

We make the following assumption on the function $b$.
\begin{assumption} \label{assumptionb}
We assume that $b$ is an odd, smooth function on $\mathbb{R}$ and that there exist constants $\alpha>0$ and $C>0$ such that
\[|1-b(|z|)| \leq Ce^{-\alpha \langle z \rangle}.\]
\end{assumption}
We also recall the function $\phi_\beta$ which is used in the analysis,
\begin{equation} \label{eq:definephi}
\phi_\beta = \exp \left( \beta\sqrt{1+4\rho^2/\gamma + 2\rho R + \gamma R^2}\right). 
\end{equation}
The following theorem states the main result of the paper.
\begin{theorem}\label{thm:main}
Let Assumption \ref{assumptionb} be satisfied. Then there exists a steady solution $f_\infty$ to \eqref{eq:nlmain}, which is a probability measure on $\mathbb{R}^2$. Furthermore, $f_\infty$ is a smooth function with a bounded exponential moment.
\end{theorem}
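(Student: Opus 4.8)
The plan is to prove Theorem~\ref{thm:main} via a Schauder fixed point argument, as announced in the abstract. Fix a candidate density $\mu$ belonging to a suitable convex, compact subset $\mathcal{K}$ of $L^1(\mathbb{R}^2)$ — concretely, probability densities with a uniformly bounded exponential moment $\int \phi_\beta\, \mu \le M$ and perhaps a uniform bound in a slightly stronger norm to recover compactness. For such $\mu$, the functionals $a_1[\mu](\rho)$ and $a_2[\mu](R)$ become \emph{fixed} bounded, smooth functions of a single variable (boundedness of $b$ and Assumption~\ref{assumptionb} are what make this work, and in particular $a_1[\mu](\rho) \to \mp 1$ as $\rho \to \pm\infty$, so the $\rho$-drift is confining). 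One then considers the \emph{linear} degenerate Fokker–Planck equation
\begin{equation*}
\partial_t g = -\partial_R\bigl(a[\mu]\,g\bigr) + \partial_\rho\Bigl(\tfrac{\sigma^2}{2}\partial_\rho g + \gamma a_1[\mu]\,g\Bigr),
\end{equation*}
which is hypocoercive: diffusion acts only in $\rho$, but the transport term $-\partial_R(a[\mu]g)$, whose $R$-velocity is $-a_2[\mu](R) + a_1[\mu](\rho)$, couples the two variables and (together with the confining $\rho$-drift) should give a unique normalised steady state $g_\infty =: \Phi(\mu)$ in an appropriately weighted space. Establishing existence, uniqueness, positivity, smoothness, and the uniform exponential-moment bound for this linear steady state — uniformly over $\mu \in \mathcal{K}$ — is the technical heart of the argument, and I expect the paper to carry it out using the DMS-type hypocoercivity framework \cite{DMS} adapted to the variable (non-linear-in-variable) drift $a_1[\mu]$, with the weight $\phi_\beta$ of \eqref{eq:definephi} chosen precisely so that the combined drift is dissipative against it. The exponent under the square root, $1 + 4\rho^2/\gamma + 2\rho R + \gamma R^2$, looks engineered to be a Lyapunov function for the drift vector field $(\,-\gamma a_1[\mu],\, -a_2[\mu]+a_1[\mu]\,)$, so a Lyapunov/Foster–type computation should yield the moment bound.

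With the map $\Phi:\mathcal{K}\to\mathcal{K}$ in hand, the next step is to verify the Schauder hypotheses: (i) $\Phi(\mathcal{K})\subseteq\mathcal{K}$, which amounts to checking that the uniform exponential-moment bound $M$ and any auxiliary regularity bound defining $\mathcal{K}$ are \emph{preserved} — this is where one must be careful to close the estimate, choosing $M$ large enough that the a~priori bound on $\Phi(\mu)$ returns a value $\le M$ independent of $\mu$; (ii) continuity of $\Phi$, which follows from continuous dependence of $a_i[\mu]$ on $\mu$ (easy, since $b$ is bounded and smooth) together with continuous dependence of the linear steady state on its coefficients (from the quantitative hypocoercive estimates, e.g. a spectral-gap/resolvent bound); and (iii) compactness of $\mathcal{K}$ in the ambient topology, for which the exponential weight gives tightness and a uniform $H^1_{loc}$ or parabolic-regularity bound (from hypoellipticity of the degenerate operator) gives local compactness, so $\mathcal{K}$ is compact in $L^1$. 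Schauder then produces a fixed point $f_\infty = \Phi(f_\infty)$, which by construction is a probability measure solving $0 = -\partial_R(a[f_\infty]f_\infty) + \partial_\rho(\tfrac{\sigma^2}{2}\partial_\rho f_\infty + \gamma a_1[f_\infty]f_\infty)$, i.e.\ a steady state of \eqref{eq:nlmain} with a bounded exponential moment.

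Finally, smoothness of $f_\infty$: the steady equation is of the form $\partial_\rho(\tfrac{\sigma^2}{2}\partial_\rho f_\infty) + (\text{lower-order first-order terms with smooth coefficients}) = 0$, and although it is degenerate (no $R$-diffusion), the drift coefficient $a[f_\infty]$ and $a_1[f_\infty]$ are $C^\infty$ once $f_\infty$ is known to be, say, a finite measure — because they are convolutions of the smooth bounded function $b$ against $f_\infty$. One then bootstraps: Hörmander's hypoellipticity condition holds provided the commutator of the diffusion field $\partial_\rho$ with the transport field spans the missing $\partial_R$ direction, which it does precisely because the $R$-velocity depends on $\rho$ (the bracket $[\partial_\rho,\, (a_1[f_\infty](\rho)-a_2[f_\infty](R))\partial_R] = a_1[f_\infty]'(\rho)\,\partial_R$ is non-degenerate wherever $a_1'\neq 0$); away from the zero set of $a_1'$ one argues separately or notes the stationary equation there reduces to an ODE in $\rho$. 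Hence $f_\infty \in C^\infty(\mathbb{R}^2)$.

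The main obstacle I anticipate is step~(i) combined with the linear hypocoercive analysis: producing a uniform-in-$\mu$ exponential moment bound for the linear steady state $\Phi(\mu)$ with a drift that is not affine in the variables requires the weighted hypocoercivity estimate and the Lyapunov structure of $\phi_\beta$ to be compatible, and getting the constants to close (so that $\mathcal{K}$ is genuinely invariant and the weight $\beta$, the moment bound $M$, and the hypocoercivity constants are mutually consistent) is delicate. Everything else — continuity, compactness via tightness plus hypoelliptic regularity, and the final smoothness bootstrap — should be comparatively routine once that estimate is secured.
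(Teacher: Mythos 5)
Your overall architecture --- fix $\mu$, study the linear degenerate Fokker--Planck equation with frozen drifts $a_1[\mu],a_2[\mu]$, find its unique steady state $\Phi(\mu)$, and run Schauder --- matches the paper. But two of the central technical ideas are either wrong or missing, and a third step is much harder than you indicate.

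First, you predict that the linear steady state is obtained ``using the DMS-type hypocoercivity framework \cite{DMS}.'' The paper argues explicitly and at some length that this cannot work here: DMS-style (and Villani-style) hypocoercivity requires working in spaces weighted against the equilibrium state and exploiting explicit knowledge of that state, and for \eqref{eq:linearmain} the steady state is \emph{not} explicit (there is no closed-form $\exp(-\Phi)$-type formula because the $R$-drift $a[\mu]$ does not derive from a potential in the needed way). The paper instead invokes Harris's theorem (in the Hairer--Mattingly form), which only needs a Foster--Lyapunov function (your $\phi_\beta$ instinct is right) plus a local Doeblin/minorisation condition verified via Malliavin/H\"ormander hypoellipticity and the Stroock--Varadhan support theorem. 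The paper also notes that even if the steady state were explicit, the weakness of the commutator $[\partial_\rho, a_1[\mu]\partial_R]$ as $|\rho|\to\infty$ and the merely exponential (not Gaussian) tails would invalidate the Poincar\'e inequalities that Villani/DMS require; Harris sidesteps this because the minorisation only needs to hold on compacts.

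Second, and this is a genuine gap: you say the moment bound on $\Phi(\mu)$ should ``return a value $\le M$ independent of $\mu$.'' It does not. The constants $z_1,z_2$ at which $|a_1[\mu]|,|a_2[\mu]|>1/2$ (where the Lyapunov drift kicks in) depend on $\mu$ through its exponential moment: if $\int\mu\phi_\beta\le M$, then $z_i\sim\log M$. Pushing this through the Lyapunov estimate gives $\int G(\mu)\phi_\beta\le C M^\eta$ with $\eta=\sqrt{7/12}+o(\beta)<1$ for $\beta$ small. The invariance of $\mathscr{C}_{M,\beta}$ then holds for $M$ large precisely because the exponent $\eta$ is strictly less than $1$, and this is where the decay rate $\alpha$ in Assumption~\ref{assumptionb} enters quantitatively. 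Without noticing this sublinear structure, the invariance step does not close.

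Third, the continuity step is considerably harder than ``follows from continuous dependence of $a_i[\mu]$ on $\mu$.'' Continuity of $\mu\mapsto a_i[\mu]$ in $L^\infty$ is easy, but one then only controls $W_1(S_t^{\mu_1}\nu, S_t^{\mu_2}\nu)$ via an SDE coupling; upgrading Wasserstein closeness to closeness in the weighted total-variation norm $\|\cdot\|_\beta$ needs an interpolation argument combined with weighted hypoelliptic regularisation estimates on the semigroup (the paper proves a family of Lions-type differential inequalities for $\int m(\partial_\rho f)^2$, $\int m\,\partial_\rho a_1\,\partial_\rho f\,\partial_R f$, $\int m(\partial_\rho a_1)^2(\partial_R f)^2$, with the weight $\partial_\rho a_1$ inserted precisely to compensate the degeneracy of the mixing as $|\rho|\to\infty$). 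Finally, your worry about the zero set of $a_1'$ in the H\"ormander step is unfounded: under Assumption~\ref{assumptionb}, $b$ is odd and its relevant behaviour gives $\partial_\rho a_1[\mu](\rho)>0$ everywhere, so the bracket $[\partial_\rho, a[\mu]\partial_R]=(\partial_\rho a_1[\mu])\partial_R$ spans the missing direction at every point; no separate argument is needed.
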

We will prove existence of a steady state using Schauder's fixed point argument. In doing so we consider the following linearised transport equation 
for fixed probability measure $\mu$, 
 \begin{equation} \label{eq:linearmain}
 \partial_t f = -\partial_R \left( a[\mu] f \right) + \partial_\rho \left( \frac{\sigma^2}{2} \partial_\rho f +  \gamma a_1[\mu](\rho) f \right).
 \end{equation} 
     \noindent We will prove the existence of a steady state using the following steps:
 \begin{itemize}
     \item Step 1: Define a map $G(\mu)$, from the set of probability measures on $\mathbb{R}^2$ to itself by setting $G(\mu)$ to be the unique steady state of \eqref{eq:linearmain}.
     \item Step 2: Show that $G(\mu)$ is well-defined by a Harris's theorem argument. This step will also give us some bounds on $G(\mu)$. We will define a scale of exponential moments  of the form $\intR \mu \phi_\beta \dr \dR$ and get bounds on $\intR G(\mu)\phi_\beta \, \dr \dR$ for $\beta$ sufficiently small.
     \item Step 3: We show that under Assumption \ref{assumptionb} our bounds on $G(\mu)$ will allow us to show that if 
     \[ \intR \mu \phi_\beta \, \dr \dR \leq M, \] then
     \[ \intR G(\mu) \phi_\beta \, \dr \dR \leq CM^{\eta}, \] where $\eta$ is a constant we can find explicitly in terms of $\alpha$ from Assumption \ref{assumptionb} and $\beta$. When $\beta$ is small enough we will have $\eta < 1$ which allows us to find a convex set of measures which is preserved by the function $G$. 
     \item Step 4: We then need to choose a topology to show that $G$ is a continuous map with a compact image. The results of step 2 will provide us with a natural topology and allow us to relate continuity of $G$ with respect to $\mu$ to continuity of the semigroup associated to the linear equation \eqref{eq:linearmain} with respect to $\mu$. We show continuity of the semigroup in Wasserstein distance and then convert this to continuity in our strong topology using an interpolation argument and the regularising nature of \eqref{eq:linearmain}. We have weak compactness as a result of our moment estimates.
     \item Step 5: The above allows us to apply Schauder's fixed point theorem to the map $G$. This gives us the existence of a steady state to the non-linear equation. Furthermore, from our study of $G$ and the fact that the steady state is a fixed point of $G$ this also give us that the steady state has a smooth density (from the regularising estimates in step 4) and $\intR f\phi \, \dr \dR \leq M$ (from the bounds found in step 3). 
 \end{itemize}
 
 \subsubsection{Discussion of the linear equation \eqref{eq:linearmain}}
 Before we begin the main technical proofs we comment that the analysis relies strongly on an understanding of both the steady states and convergence to the steady states for \eqref{eq:linearmain}. This is a linear Fokker-Planck equation and is in some ways very similar to equations which have been studied using hypocoercivity theory, particularly by Villani in \cite{Villani2009}.
 
 A key example of an equation to which we can apply the theory in \cite{Villani2009} is the kinetic Fokker-Planck equation of the following form:
 \begin{equation} \label{eq:kfp}
     \partial_t f = - v \partial_x f + \partial_x \phi(x) \partial_v f + \partial_v \left( \frac{\sigma^2}{2} \partial_v f + v f\right).
 \end{equation}
 Here the spatial variable $x$ corresponds to the ratings $R$ and the velocity $v$ to the underlying strength $\rho$.
 We can make a comparison of the different terms in this equation with terms in \eqref{eq:linearmain}. 
 \begin{itemize}
    \item $\sigma^2 \partial^2_v f$ and $\sigma^2 \partial_\rho^2 f$ both correspond to linear second order diffusions which are only present in one of the two variables.
     \item The transport terms in $x$ and $R$, that is $-v \partial_x$ and $-\partial_R(a_1[\mu](\rho) f)$, correspond to mixing between the diffused and undiffused variables.
     \item The transport terms in $v$ and $\rho$, that is $\partial_v(vf)$ and $\gamma\partial_\rho(a_1[\mu](\rho) f)$, correspond to confining effects in the diffused variables.
     \item The terms $\partial_x \phi(x) \partial_v f$ and $\partial_R(a_2[\mu](R) f)$ both correspond to confining terms in the undiffused variables. We should note here that these two terms are less directly comparable.
 \end{itemize}
 
 For both equations we expect the term that mixes between the diffused and undiffused variables to allow the smoothing and spreading effect from the diffusion operator to affect the non-diffused variables. This is the key effect in both hypoellipticity and hypocoercivity. We expect the combination of this with the presence of confining terms in both variables to allow us to show convergence to equilibrium.
 
 However, there are differences between \eqref{eq:linearmain} and \eqref{eq:kfp} which mean the study of their long time behaviour is very different. The most important of these differences is is that for \eqref{eq:kfp} we can write down an explicit steady state $\exp( - \phi(x) - v^2/2\sigma).$ This is not the case for \eqref{eq:linearmain}. This does not just affect the amount of information we have about the steady state it also vastly reduces the tools we have for studying the convergence to equilibrium. Almost all hypocoercivity theories including \cite{Villani2009} and \cite{DMS} require us to work in spaces weighted against the equilibrium state and to use explicit knowledge of this state when working in these spaces. There are a few works where they are able to apply these theorems to equations with non-explicit steady states in a perturbative setting \cite{BouinHoffmannMouhot, Iacobucci}. The only work we are aware of in a non-perturbative setting is \cite{CalvezRaoulSchmeiser}. We note that Harris's theorem in a good tool in this situation and has been successfully applied to linear kinetic equations with non-explicit steady states in \cite{Cao, Bernou, EvansYoldas, EvansMenegaki}.
 
 Another additional challenge when working with \eqref{eq:linearmain} as compared to \eqref{eq:kfp} is that the term that mixes the diffused and undiffused variables is weaker in \eqref{eq:linearmain}. Specifically, in the kinetic Fokker-Planck equation we will see a diffusive effect similar to what would be produced by $[v\partial_x, \partial_v]^2 = \partial^2_x$ at times of order $t^2$. Working analogously for \eqref{eq:linearmain} we expect to generate a diffusion in the $R$ variables at second order in time similar to what would be produced by the operator $[a_1[\mu](\rho) \partial_R, \partial_\rho]^2 = (\partial_\rho a_1[\mu](\rho))^2 \partial^2_R$. But since $\partial_\rho a_1[\mu](\rho) \rightarrow 0$ exponentially fast as $|\rho| \rightarrow \infty$ we see that the diffusion effect in the $R$ variables becomes weaker and weaker as $|\rho| \rightarrow \infty.$ Similar effects were observed in case of a special relativistic kinetic Fokker-Planck equation, see \cite{Calogero}. We expect this weak mixing to be especially challenging in our setting as our confining terms are bounded. This means we expect to have exponential rather than Gaussian concentration of the steady states. The combination of these two effects means that even if we had an explicit form for the steady states the Poincar\'e inequalities needed in the theory of \cite{Villani2009} would not be valid. In this respect we are also helped by using Harris's theorem. As we will see in more detail in the proof verifying the assumptions of Harris's theorem only requires us to verify the mixing property of the semigroup on a compact set. This means we do not need to worry that our mixing effect becomes 0 as $|\rho| \rightarrow \infty.$

\subsubsection{Topologies and spaces}
We will use the following function spaces, topologies and theorems in the main proof. We work in the space $\mathcal{P}_\beta$ defined by
\begin{defn}
We define $\mathcal{P}_\beta$ to be the space of probability measures on $\mathbb{R}^2$ which have the property that
\[ \int_{\mathbb{R}^2} \phi_\beta(\rho, R)\mu(\mathrm{d}\rho, \mathrm{d}R) < \infty. \]
Here $\phi_\beta$ is as defined in \eqref{eq:definephi}.
\end{defn}
We consider this as a subspace of the space $\mathcal{M}_\beta$ defined by
\begin{defn}
We define $\mathcal{M}_\beta$ to be the space of signed measures on $\mathbb{R}^2$ which satisfy
\[ \int_{\mathbb{R}^2} \phi_\beta(\rho, R)|\mu|(\mathrm{d}\rho, \mathrm{d}R) < \infty.\]
\end{defn}
We can make $\mathcal{M}_\beta$ a Banach space with the following norm.
\begin{defn}
We define a norm $\|\cdot\|_\beta$ on $\mathcal{M}_\beta$ by
\[ \|\mu\|_\beta = \int_{\mathbb{R}^2} \phi_\beta(\rho, R)|\mu|(\mathrm{d}\rho, \mathrm{d}R) < \infty.  \]
\end{defn}
We consider two topologies on $\mathcal{P}_\beta$: the topology defined by the norm $\|\cdot\|_\beta$ and the topology defined by the notion of weak convergence of measures. We recall Prokhorov's theorem.
\begin{theorem}[Prokhorov's Theorem]
Suppose that $S$ is a metric space and $\mathcal{P}(S)$ is the space of probability measures on $S$, then a collection of probability measures $\mathscr{C}$ is sequentially compact in the topology of weak convergence of measures if and only if it is tight. Where we say the set $\mathscr{C}$ is tight if for every $\epsilon>0$ there exists a compact set $K_\epsilon \subset S$ such that for any $\mu \in \mathscr{C}$ we have
\[ \mu(S \setminus K_\epsilon) < \epsilon.\]
\end{theorem}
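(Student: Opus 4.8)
The plan is to prove the two implications separately. Throughout, ``sequentially compact'' is understood in the standard relative sense, i.e.\ that every sequence in $\mathscr{C}$ has a subsequence converging weakly to some probability measure on $S$ (this is the form of Prokhorov's theorem actually used in the paper). The forward implication holds for an arbitrary metric space $S$, while the converse additionally uses that $S$ is separable and complete; this is the case for $S=\mathbb{R}^2$, which is the only instance of the theorem needed below.

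\medskip
\noindent\emph{Tightness $\Rightarrow$ sequential compactness.} Let $(\mu_n)\subset\mathscr{C}$. Using tightness, choose compact sets $K_j\subset S$ with $\sup_n\mu_n(S\setminus K_j)<2^{-j}$; then $S_0:=\bigcup_j K_j$ is separable and every $\mu_n$ is concentrated on $S_0$. Embed $S_0$ homeomorphically into a compact metric space $\widehat S$ (for instance the Hilbert cube, via the Urysohn metrization theorem) and regard each $\mu_n$ as a probability measure on $\widehat S$. Since $C(\widehat S)$ is separable, $\mathcal{P}(\widehat S)$ is weak-$*$ compact (Banach--Alaoglu) and metrizable, so a subsequence $(\mu_{n_\ell})$ converges weak-$*$ to some $\mu\in\mathcal{P}(\widehat S)$. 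The image of each $K_j$ is compact, hence closed in $\widehat S$, so the portmanteau theorem yields $\mu(K_j)\ge\limsup_\ell\mu_{n_\ell}(K_j)\ge1-2^{-j}$; letting $j\to\infty$ shows $\mu$ is concentrated on the image of $S_0$, hence may be regarded as an element of $\mathcal{P}(S)$. Finally, for $f\in C_b(S)$ and $\epsilon>0$ I extend $f|_{K_j}$ to $\widehat f\in C(\widehat S)$ with $\|\widehat f\|_\infty\le\|f\|_\infty$ by the Tietze extension theorem, and split
\[ \int_S f\,\mathrm{d}\mu_{n_\ell}-\int_S f\,\mathrm{d}\mu=\int(f-\widehat f)\,\mathrm{d}\mu_{n_\ell}-\int(f-\widehat f)\,\mathrm{d}\mu+\int\widehat f\,\mathrm{d}(\mu_{n_\ell}-\mu). \]
The first two terms are each bounded by $2\|f\|_\infty 2^{-j}$, since $f-\widehat f$ vanishes on $K_j$ while $\mu_{n_\ell}(S\setminus K_j),\mu(S\setminus K_j)\le2^{-j}$, and the last term tends to $0$ by weak-$*$ convergence on $\widehat S$. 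Choosing $j$ large shows $\mu_{n_\ell}\to\mu$ weakly in $\mathcal{P}(S)$.

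\medskip
\noindent\emph{Sequential compactness $\Rightarrow$ tightness.} Assume $S$ is Polish and, for contradiction, that $\mathscr{C}$ is not tight: there is $\epsilon>0$ such that every compact $K\subset S$ admits some $\mu\in\mathscr{C}$ with $\mu(S\setminus K)\ge\epsilon$. Fix a countable dense set $\{x_i\}\subset S$. The key lemma is: for all $\delta>0$ and $\eta>0$ there exists $m\in\mathbb{N}$ with $\mu\big(\bigcup_{i\le m}\overline{B}(x_i,\delta)\big)>1-\eta$ for every $\mu\in\mathscr{C}$. Indeed, if this failed for some $\delta,\eta$, one could pick $\mu_m\in\mathscr{C}$ with $\mu_m\big(\bigcup_{i\le m}\overline{B}(x_i,\delta)\big)\le1-\eta$ for all $m$; passing to a weakly convergent subsequence $\mu_{m_\ell}\to\mu$ and applying the portmanteau inequality to the open sets $\bigcup_{i\le M}B(x_i,\delta)$ gives $\mu\big(\bigcup_{i\le M}B(x_i,\delta)\big)\le1-\eta$ for every fixed $M$, whence $\mu(S)\le1-\eta$ after letting $M\to\infty$ and using density --- contradicting $\mu\in\mathcal{P}(S)$. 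Now apply the lemma with $\delta=2^{-k}$ and $\eta=\epsilon\,2^{-k-1}$ to obtain closed sets $A_k:=\bigcup_{i\le m_k}\overline{B}(x_i,2^{-k})$ with $\mu(A_k)>1-\epsilon\,2^{-k-1}$ for all $\mu\in\mathscr{C}$. The set $K:=\bigcap_k A_k$ is closed and totally bounded (for $2^{-k}<\varepsilon'$ the finite set $\{x_1,\dots,x_{m_k}\}$ is an $\varepsilon'$-net for $K$), hence compact by completeness of $S$; and $\mu(S\setminus K)\le\sum_k\mu(S\setminus A_k)<\epsilon/2$ for every $\mu\in\mathscr{C}$, contradicting the choice of $\epsilon$. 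Hence $\mathscr{C}$ is tight.

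\medskip
I expect the main obstacle to be the first implication, specifically the construction of the limiting probability measure. The delicate point is not the weak-$*$ extraction itself --- which is automatic on a compact metrization of $S_0$ --- but ruling out loss of mass ``at infinity'': tightness must be used, via the portmanteau inequality on the exhausting compacts $K_j$, to guarantee that the weak-$*$ limit is a genuine probability measure on $S$ and that the convergence upgrades from vague to weak. The converse is comparatively routine once the relative-compactness lemma is established; the only non-formal step there is the passage from ``closed and totally bounded'' to ``compact'', which is precisely where completeness of $S$ (available here since $S=\mathbb{R}^2$) enters.
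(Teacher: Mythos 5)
The paper does not prove this statement: Prokhorov's theorem is recalled there as a classical result and used only in the direction ``bounded $\phi_\beta$-moment $\Rightarrow$ tight $\Rightarrow$ sequentially compact'' (Lemma \ref{lem:compact}), so there is no in-paper proof to compare against. Your argument is the standard textbook proof --- Urysohn/Hilbert-cube embedding plus Banach--Alaoglu and a Tietze-extension upgrade from vague to weak convergence for the forward implication, and the covering/total-boundedness argument for the converse --- and it is correct, including your accurate remark that the converse needs $S$ Polish (automatic for $S=\mathbb{R}^2$, the only case the paper uses) while the forward direction, the one actually invoked in the paper, holds for general metric $S$.
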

\begin{lem} \label{lem:compact}
Any subset $\mathscr{C}$ of $\mathcal{P}_\beta$ which is bounded with respect to the $\|\cdot\|_*$ norm is sequentially compact in the topology of weak convergence of measures.
\end{lem}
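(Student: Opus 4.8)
The plan is to deduce the statement directly from Prokhorov's theorem, so the only thing to check is that a $\|\cdot\|_\beta$-bounded family $\mathscr{C} \subset \mathcal{P}_\beta$ is tight (here I read the norm $\|\cdot\|_*$ in the statement as $\|\cdot\|_\beta$). The crucial structural fact is that $\phi_\beta$ has compact sublevel sets. Indeed, the quadratic form $q(\rho,R) = 4\rho^2/\gamma + 2\rho R + \gamma R^2$ under the square root in \eqref{eq:definephi} is positive definite: its leading coefficient $4/\gamma$ is positive (since $\gamma>0$) and $4\cdot\frac{4}{\gamma}\cdot\gamma - 2^2 = 12 > 0$. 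Hence $\sqrt{1+q(\rho,R)} \to \infty$ as $|(\rho,R)| \to \infty$, so $\phi_\beta$ is continuous on $\mathbb{R}^2$ and $\phi_\beta(\rho,R) \to \infty$ as $|(\rho,R)|\to\infty$. Consequently, for each $n\in\mathbb{N}$ the set $K_n := \{(\rho,R)\in\mathbb{R}^2 : \phi_\beta(\rho,R)\le n\}$ is closed and bounded, hence compact.

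Next I would apply a Markov-type inequality. Assume $\|\mu\|_\beta \le M$ for every $\mu\in\mathscr{C}$; since each $\mu\in\mathscr{C}$ is a (nonnegative) probability measure we have $|\mu|=\mu$. Because $\{\phi_\beta > n\} = \mathbb{R}^2\setminus K_n$, it follows that for every $\mu\in\mathscr{C}$
\[ \mu(\mathbb{R}^2\setminus K_n) = \mu\bigl(\{\phi_\beta > n\}\bigr) \le \frac{1}{n}\int_{\mathbb{R}^2}\phi_\beta\,\mathrm{d}\mu \le \frac{M}{n}. \]
Given $\epsilon>0$, choosing $n > M/\epsilon$ yields $\mu(\mathbb{R}^2\setminus K_n) < \epsilon$ uniformly in $\mu\in\mathscr{C}$, which is exactly the tightness condition of Prokhorov's theorem with compact set $K_\epsilon := K_n$. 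Invoking Prokhorov's theorem (with $S=\mathbb{R}^2$) then gives that $\mathscr{C}$ is sequentially compact in the topology of weak convergence of measures, which is the claim.

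There is essentially no serious obstacle in this argument; it is a routine tightness estimate. The only two points that deserve a line of care are the positive-definiteness of the quadratic form $q$, which is what makes the sublevel sets of $\phi_\beta$ genuinely compact (and hence makes the weight ``confining'' in both variables simultaneously), and the bookkeeping identification of the norm $\|\cdot\|_*$ with $\|\cdot\|_\beta$. If one wishes, one can additionally remark — though it is not needed for the statement — that by writing $\phi_\beta$ as an increasing limit of bounded continuous functions and using the portmanteau characterisation of weak convergence, the map $\mu\mapsto\int_{\mathbb{R}^2}\phi_\beta\,\mathrm{d}\mu$ is lower semicontinuous under weak convergence, so that the weak limit of any subsequence extracted above again satisfies $\int\phi_\beta\,\mathrm{d}\mu \le M$ and in particular lies in $\mathcal{P}_\beta$.
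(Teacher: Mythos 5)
Your proof is correct and follows essentially the same route as the paper: reduce to tightness via Prokhorov, then use the growth of $\phi_\beta$ at infinity together with a Chebyshev/Markov-type estimate on the weighted mass. The only addition is that you explicitly verify the quadratic form under the square root is positive definite (so $\phi_\beta$ really blows up in all directions), which the paper leaves implicit.
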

\begin{proof}
By Prokhorov's theorem it is sufficient to prove that such a set is tight. Suppose that for every $\mu \in \mathscr{C}$ we have
\[ \int_{\mathbb{R}^2} \phi(\rho, R) \, \mu(\mathrm{d}\rho, \mathrm{d}r) \leq M. \] Then we know that $\phi^{-1}$ converges to $0$ as $\|(\rho, R)\| \rightarrow \infty$ so for every $\epsilon$ there exists a compact set $K_\epsilon$ such that
\[ \phi^{-1}(\rho,R) < \epsilon/M, \; \forall (\rho, R) \notin K_\epsilon. \] Using this we have
\begin{align*} 
\int_{\mathbb{R}^2 \setminus K_\epsilon} \mu(\mathrm{d}\rho, \mathrm{d}R) & = \int_{\mathbb{R}^2} \left( \phi(\rho, R)^{-1} 1_{\mathbb{R}^2 \setminus K_\epsilon} \right) \left( \phi(\rho, R) \right)\mu(\mathrm{d}\rho, \mathrm{d}R)\\
& \leq \frac{\epsilon}{M} \intR \phi(\rho,R)\mu(\mathrm{d}\rho, \mathrm{d}R)\\ 
&\leq \epsilon.
\end{align*}
\end{proof}
Finally, we define the compact set used in Schauder's fixed point theorem.
\begin{defn} \label{dfn:C}
We write $\mathscr{C}_{M,\beta}$ to be the set 
\[ \mathscr{C}_{M,\beta} := \{ \mu \, | \, \mu \in \mathcal{P}_{\beta}, \|\mu\|_* \leq M\},\] for positive constants $\beta, M$. We note that by the results above $\mathscr{C}_{M,\beta}$ is compact in the topology of weak convergence of measures for any $M, \beta$.
\end{defn}

\subsection{Schauder fixed point argument}
In order to apply Schauder's fixed point argument we first need to define a function which will have a steady state of $f_\infty$ as a fixed point. 
\begin{defn}
    For fixed $\mu$, a probability measure, we write $\mathcal{S}_t^\mu$ to be the linear semigroup associated to the evolution governed by equation \eqref{eq:linearmain}. Furthermore, when this equation has a unique steady state we write $G(\mu)$ to be this steady state.
\end{defn}
\subsubsection{The function $G$ is well-defined}
In this section we prove the following proposition.
\begin{prop} \label{prop:Gdefn}
For any probability measure $\mu$ on $\mathbb{R}^2$ there exists a unique steady state to equation \eqref{eq:linearmain}. Therefore, the function $G$ is well-defined.
\end{prop}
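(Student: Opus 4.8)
The plan is to establish Proposition~\ref{prop:Gdefn} by verifying the hypotheses of Harris's (subgeometric or geometric) ergodic theorem for the Markov semigroup $\mathcal{S}_t^\mu$ associated to \eqref{eq:linearmain}, exactly as announced in Step 2 of the outline. Note that for fixed $\mu$ the coefficients $a[\mu](\rho,R)=a_1[\mu](\rho)-a_2[\mu](R)$ and $a_1[\mu](\rho)$ are fixed bounded smooth functions (bounded because $b$ is bounded by Assumption~\ref{assumptionb}), with bounded derivatives, so \eqref{eq:linearmain} is a genuinely \emph{linear} degenerate Fokker--Planck equation with smooth bounded drift and constant diffusion acting only in $\rho$. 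First I would check that \eqref{eq:linearmain} generates a well-defined, mass- and positivity-preserving $C_0$-semigroup on (a suitable weighted) $L^1$, e.g.\ by regarding it as the Kolmogorov forward equation of the SDE $\mathrm{d}R_t = a[\mu](\rho_t,R_t)\,\mathrm{d}t$, $\mathrm{d}\rho_t = -\gamma a_1[\mu](\rho_t)\,\mathrm{d}t + \sigma\,\mathrm{d}B_t$; since the drift is globally Lipschitz and bounded this SDE has a unique non-explosive strong solution, giving a Feller Markov semigroup on $\mathbb{R}^2$ with smooth transition densities by hypoellipticity (the vector fields $\partial_\rho$ and the drift satisfy Hörmander's bracket condition once $\partial_\rho a[\mu] = \partial_\rho a_1[\mu]$ is not identically zero, which holds because $b$ is odd and non-constant).

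The two substantive ingredients of Harris's theorem are a Lyapunov/Foster drift condition and a minorization (Doeblin) condition on sublevel sets of the Lyapunov function. For the Lyapunov function I would take $V = \phi_\beta$ from \eqref{eq:definephi} (or a small power/variant thereof), and compute the action of the formal generator $\mathcal{L}^* = a[\mu]\partial_R - \gamma a_1[\mu]\partial_\rho + \tfrac{\sigma^2}{2}\partial_\rho^2$ on $V$; the exponent $\sqrt{1+4\rho^2/\gamma+2\rho R+\gamma R^2}$ is (comparable to) a norm on $\mathbb{R}^2$, and the quadratic form under the square root is positive definite for $\gamma>0$ (discriminant $4\gamma\cdot(4/\gamma) - 1 = 15 > 0$), so $\nabla V$ points roughly radially outward. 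Since $a[\mu]$ is bounded, the first-order transport terms contribute $O(\beta V)$, while the diffusion term $\tfrac{\sigma^2}{2}\partial_\rho^2 V$ contributes a term of order $\beta^2 V$ from the gradient-squared piece plus a lower-order curvature piece; the one genuinely confining contribution comes from $-\gamma a_1[\mu]\partial_\rho V$, and here one uses that $a_1[\mu](\rho) = \int b(\rho-\rho')\,\mu(\mathrm{d}\rho',\mathrm{d}R')$ has the same sign as $\rho$ for $|\rho|$ large (because $b$ is odd, increasing through $0$, and $\mu$ is a probability measure, so $a_1[\mu](\rho)\to \pm\int 1\,\mathrm{d}\mu = \pm 1$ as $\rho\to\pm\infty$ by Assumption~\ref{assumptionb}). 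This yields $\mathcal{L}^* V \le -c V + d\,\mathbf{1}_{K}$ for $\beta$ small enough and a compact set $K$ — note the constants $c,d$ depend only on $\sigma,\gamma,\beta$ and the uniform bounds on $b$, \emph{not} on $\mu$, which is essential for the later steps. For the minorization, on a compact set $K$ one runs the hypoelliptic/parabolic smoothing: the transition density $p_t^\mu((\rho,R),\cdot)$ is strictly positive and continuous, hence bounded below by a fixed nonzero measure uniformly over starting points in $K$; again uniformity in $\mu$ can be arranged because the relevant lower bounds depend only on bounds on the drift and its derivatives on $K$ (controllability holds for every $\mu$ since $\partial_\rho a_1[\mu]$ cannot vanish identically on an interval).

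With these two conditions Harris's theorem gives a unique invariant probability measure, which is precisely $G(\mu)$, and moreover $\mathcal{S}_t^\mu$ converges to it exponentially in a $\phi_\beta$-weighted total variation norm with rate independent of $\mu$ — this is exactly the quantitative input needed for Steps 3 and 4. I would close by remarking that the invariant measure has a smooth density by the same hypoellipticity used for the minorization, and that $\int \phi_\beta\,G(\mu) < \infty$ follows from the Lyapunov bound (integrating the drift inequality in the stationary regime gives $\int \phi_\beta\,G(\mu)\,\mathrm{d}\rho\,\mathrm{d}R \le d/c$).

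The main obstacle I anticipate is the Lyapunov computation: one must choose $\beta$ small enough that the destabilizing $O(\beta V)$ contribution from the unbounded-direction transport term $a[\mu]\partial_R V$ (whose coefficient does \emph{not} decay) is dominated by the confinement coming from $-\gamma a_1[\mu]\partial_\rho V$, and the latter is only effective in the $\rho$-direction and only for $|\rho|$ large. Concretely, along directions where $|R|\gg|\rho|$ the confining term is too weak, so the precise algebraic form of $\phi_\beta$ — in particular the cross term $2\rho R$ and the weights $4/\gamma$ and $\gamma$ — has presumably been reverse-engineered so that $\nabla \phi_\beta$ has a controlled component in the $\rho$-direction everywhere outside a compact set; verifying this, and pinning down the admissible range of $\beta$ in terms of $\alpha$, $\gamma$, $\sigma$, is where the real work lies. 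The rest (Feller property, hypoellipticity, minorization on compacts) is standard once the drift is known to be bounded and smooth.
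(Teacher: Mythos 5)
Your proposal follows the same route as the paper: relate \eqref{eq:linearmain} to the SDE \eqref{eq:SDEmain}, verify the two hypotheses of Harris's theorem (minorization via H\"ormander hypoellipticity together with a Strook--Varadhan-type controllability argument, and a Foster--Lyapunov drift condition with $V=\phi_\beta$ computed through $\mathcal{L}^*$), and deduce a unique invariant probability measure. The only imprecision is in a side remark: the constant $d$ in $\mathcal{L}^*V\le -cV+d\,\mathbf{1}_K$ \emph{does} depend on $\mu$ (through the radii $z_1,z_2$ beyond which $|a_1[\mu]|,|a_2[\mu]|\ge 1/2$), and the paper only recovers the uniformity needed for the later steps by bounding $z_1,z_2$ explicitly in terms of the exponential moment of $\mu$; also the positive-definiteness determinant of the quadratic form in $\phi_\beta$ is $(4/\gamma)\cdot\gamma-1=3$, not $15$.
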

We prove this proposition after showing the following lemma first.
\begin{lem} \label{lem:rmu}
For any $\mu$ there exists $z_1$ such that $|a_1[\mu](\rho)|> 1/2,$ whenever $|\rho|>z_1$, and $z_2$ such that $|a_2[\mu](R)|>1/2$, whenever $|R| >z_2$.
\end{lem}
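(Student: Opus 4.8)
The plan is to exploit the structure $a_1[\mu](\rho) = \intR b(\rho - \rho')\,\mu(\mathrm{d}\rho',\mathrm{d}R')$ together with Assumption~\ref{assumptionb}, which tells us that $b(z) \to 1$ as $z \to +\infty$ and $b(z) \to -1$ as $z \to -\infty$ (since $b$ is odd), with exponentially fast convergence. Because $\mu$ is a probability measure, $a_1[\mu](\rho)$ is an average of the values $b(\rho-\rho')$ over $\rho'$, so I expect $a_1[\mu](\rho) \to 1$ as $\rho \to +\infty$ and $\to -1$ as $\rho \to -\infty$; in particular $|a_1[\mu](\rho)| > 1/2$ for $|\rho|$ large. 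The only subtlety is that the mass of $\mu$ could be spread out in $\rho'$, so the rate of this convergence is not uniform over all probability measures $\mu$ — but we only need existence of \emph{some} threshold $z_1$ depending on $\mu$, which is exactly what the statement asks for.

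First I would fix $\mu$ and choose, using that $\mu$ is a probability measure, a compact interval $[-N,N]$ in the $\rho'$-variable carrying at least mass $3/4$; that is, $\mu(\{|\rho'| \le N\}) \ge 3/4$. Then for $\rho > 0$ I split the integral defining $a_1[\mu](\rho)$ into the part over $\{|\rho'| \le N\}$ and the remainder. On the main part, $\rho - \rho' \ge \rho - N$, so by Assumption~\ref{assumptionb}, $b(\rho-\rho') \ge 1 - Ce^{-\alpha\langle \rho - N\rangle}$, which exceeds $7/8$ once $\rho$ is larger than some $N'$ depending only on $N$, $\alpha$, $C$. Hence the contribution of the main part to $a_1[\mu](\rho)$ is at least $\tfrac{7}{8}\cdot\tfrac34 = \tfrac{21}{32}$. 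On the remainder, $|b| \le 1$ (this follows from Assumption~\ref{assumptionb}, or one may simply note $b = \tanh$-like and bounded; in any case $\|b\|_\infty \le 1 + C$ suffices with an obvious rescaling of the constants), so its contribution is at least $-\tfrac14\|b\|_\infty$. Choosing, if necessary, $N$ larger so that $\mu(\{|\rho'|\le N\}) \ge 1 - \tfrac{1}{8\|b\|_\infty}$ makes the remainder contribute at least $-\tfrac18$, giving $a_1[\mu](\rho) \ge \tfrac{21}{32} - \tfrac18 > \tfrac12$ for $\rho > N'$. The case $\rho \to -\infty$ is identical using oddness of $b$, giving $a_1[\mu](\rho) < -\tfrac12$; setting $z_1 = N'$ completes the bound for $a_1$.

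The argument for $a_2[\mu](R) = \intR b(R-R')\,\mu(\mathrm{d}\rho',\mathrm{d}R')$ is word-for-word the same, now choosing a compact interval $[-N,N]$ in the $R'$-variable carrying the required mass and producing a threshold $z_2$.

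I do not expect any serious obstacle here: the only thing to be careful about is that the threshold genuinely depends on $\mu$ (through the tail behaviour of its marginals) and not merely on universal constants, which is consistent with the statement. One should also make sure to use only that $b \to \pm 1$ with an exponential rate and that $\|b\|_\infty < \infty$; both are immediate from Assumption~\ref{assumptionb} once one observes $|b(|z|)| \le 1 + Ce^{-\alpha\langle z\rangle}$ and, since $b$ is continuous and bounded on $\mathbb{R}$, $\|b\|_\infty$ is finite. No smoothness of $b$ is needed for this lemma.
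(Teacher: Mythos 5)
Your proof is correct and follows essentially the same approach as the paper's: both arguments rest on the fact that $b(\rho - \rho') \to 1$ as $\rho \to +\infty$ (uniformly on sets where $\rho'$ stays bounded) together with the tightness of the probability measure $\mu$. The paper simply invokes dominated convergence to conclude $a_1[\mu](\rho) \to \pm 1$ as $\rho \to \pm\infty$, whereas you unwind that theorem into an explicit two-part estimate (bulk of $\mu$-mass on a compact set, small tail), which is more elementary and gives a quantitative threshold. One small bonus of your version: you do not rely on the paper's side remark that $a_1[\mu]$ is monotone increasing — that claim is not actually needed for this lemma (the pointwise limits alone suffice) and is not obviously implied by Assumption~\ref{assumptionb}, which does not require $b' \geq 0$.
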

\begin{proof}
We have that $b(\rho - \rho') \rightarrow c$ as $\rho \rightarrow \infty$. So by dominated convergence $a_1[\mu](\rho) \rightarrow c$ as $\rho \rightarrow \infty$ and $a_1[\mu](\rho) \rightarrow - c$ as $\rho \rightarrow \infty$. The same behavior holds true for the operator $a_2[\mu](R)$, but this time as a function of $R$. We can also differentiate these functions to see that they are monotonically increasing. This gives the result.
\end{proof}
As a consequence of the proof techniques in the proposition we also find the following result.
\begin{lem} \label{lem:Gbound}
For $\beta$ sufficiently small there exists some positive constants $C, \lambda$ such that for any two initial data $f_1, f_2 \in \mathcal{P}_{\beta}$ we have
\[ \| S_t^\mu(f_1 - f_2)\|_\beta \leq Ce^{-\lambda t} \|f_1 - f_2\|_\beta  \]
Then there exits a constant $D>0$, not depending on $\mu$, such that
\[ \int_{\mathbb{R}^2} G(\mu) \phi \, \mathrm{d}\rho \mathrm{d}R \leq D \sup_{|(\rho',R')| \leq r(\mu)} \phi(\rho',R').  \] 
Here, the function $r(\mu)$ is defined such that $|(\rho',R')|<r(\mu)$ implies that $|\rho|<z_1$ and $|R| \leq \max\{ 1,2\gamma\}z_2$ and $|(\rho, R)| \leq z_3$.
\end{lem}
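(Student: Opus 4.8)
The plan is to prove this lemma in two parts: first the contractivity estimate for the semigroup $S_t^\mu$ in the weighted norm $\|\cdot\|_\beta$, and then the bound on the moment $\int_{\mathbb{R}^2} G(\mu)\phi\,\dr\dR$, which is obtained as a byproduct of the Harris/Lyapunov structure established in the first part. Both rely on verifying the hypotheses of Harris's theorem for equation \eqref{eq:linearmain}: a Lyapunov (drift) condition using the weight $\phi_\beta$, and a minorization condition on a compact set. For the Lyapunov condition, I would compute the action of the formal adjoint of the generator of \eqref{eq:linearmain} on $\phi_\beta$; the key point is that the drift coefficients $a[\mu](\rho,R) = a_1[\mu](\rho) - a_2[\mu](R)$ and $\gamma a_1[\mu](\rho)$ are bounded (since $b$ is bounded), and moreover $a_1[\mu](\rho)$ has the definite sign of $\rho$ once $|\rho|>z_1$ (Lemma \ref{lem:rmu}), and likewise $a_2[\mu](R)$ has the sign of $R$ for $|R|>z_2$. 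The square-root exponential weight $\phi_\beta = \exp(\beta\sqrt{1+4\rho^2/\gamma+2\rho R+\gamma R^2})$ is engineered precisely so that the quadratic form under the root is positive definite and so that the confining drift in $\rho$, together with the cross term produced by the $R$-transport, dominates the diffusion $\tfrac{\sigma^2}{2}\partial_\rho^2$ and the ($R$-independent-sign) contributions from $a_2$ outside a large compact set, for $\beta$ small enough. This yields $\mathcal{L}^*\phi_\beta \leq -\lambda_0 \phi_\beta + K\mathbf{1}_{B}$ for some ball $B$, a constant $\lambda_0>0$ and $K<\infty$, all uniform in $\mu$.

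Next I would establish the minorization: there exist $t_0>0$, a compact set $B'$ (a sublevel set of $\phi_\beta$), and $\nu>0$ and a probability measure $\eta_0$ such that $S_{t_0}^\mu \delta_x \geq \nu\,\eta_0$ for all $x\in B'$. This is where hypoellipticity enters: although $\tfrac{\sigma^2}{2}\partial_\rho^2$ acts only in $\rho$, the transport term $\partial_R(a_1[\mu](\rho)f)$ mixes the two variables, and on the compact set $B'$ we have $\partial_\rho a_1[\mu](\rho)$ bounded away from $0$ (since $a_1[\mu]$ is strictly monotone — again Lemma \ref{lem:rmu}), so Hörmander's bracket condition holds with uniform constants on $B'$. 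Hence the law of the associated diffusion at time $t_0$ has a continuous, strictly positive density on a neighbourhood, uniformly over starting points in $B'$ and over $\mu$ (the only $\mu$-dependence is through the bounded, smooth coefficients $a_1[\mu],a_2[\mu]$, whose relevant norms are controlled independently of $\mu$). With the Lyapunov and minorization conditions in hand, the quantitative Harris theorem (in the Harris–Meyn–Tweedie / Hairer–Mattingly form, in the weighted total-variation norm $\|\mu\|_\beta = \int\phi_\beta\,d|\mu|$) immediately gives the existence of a unique steady state and the claimed geometric contraction $\|S_t^\mu(f_1-f_2)\|_\beta \leq Ce^{-\lambda t}\|f_1-f_2\|_\beta$, with $C,\lambda$ depending only on $\lambda_0,K,t_0,\nu$ and hence not on $\mu$.

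Finally, for the moment bound on $G(\mu)$: applying the Lyapunov inequality to the stationary measure $G(\mu)$ gives $\lambda_0 \int\phi_\beta\, G(\mu) \leq K\, G(\mu)(B) \leq K$, but to get the sharper, geometric statement involving $\sup_{|(\rho',R')|\le r(\mu)}\phi$ I would instead iterate the drift condition along the semigroup starting from a point mass, or equivalently use the standard consequence of Harris that the invariant measure satisfies $\int\phi_\beta\, G(\mu) \leq D'$ with $D'$ expressed through the value of $\phi_\beta$ on the ``small set'' $B$ — and here the small set is exactly the one coming from Lemma \ref{lem:rmu}, namely $\{|\rho|<z_1,\ |R|\le\max\{1,2\gamma\}z_2,\ |(\rho,R)|\le z_3\}$, which is how $r(\mu)$ enters. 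Collecting the constants gives $\int_{\mathbb{R}^2} G(\mu)\phi\,\dr\dR \leq D\sup_{|(\rho',R')|\le r(\mu)}\phi(\rho',R')$ with $D$ independent of $\mu$.

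The main obstacle I expect is the Lyapunov computation with the square-root weight: one must carefully track the gradient and Hessian of $\sqrt{1+4\rho^2/\gamma+2\rho R+\gamma R^2}$, check that the $\rho$-confinement beats the diffusion term $\tfrac{\sigma^2}{2}|\beta\,\partial_\rho(\cdot)|^2\phi_\beta$ for small $\beta$, and — crucially — handle the $R$-direction, where there is no diffusion and the confining effect comes only from the (bounded, correctly signed) term $a_2[\mu](R)$ for $|R|>z_2$ interacting with the cross term $2\rho R$ in the weight; ensuring all constants are uniform in $\mu$ while $\mu$ only appears through $a_1[\mu],a_2[\mu]$ is the delicate bookkeeping. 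A secondary obstacle is obtaining the minorization constants uniformly in $\mu$, which requires a short equicontinuity/compactness argument for the family of hypoelliptic diffusions with coefficients in a bounded set.
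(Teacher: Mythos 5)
Your overall strategy matches the paper's: use $\phi_\beta$ as a Foster--Lyapunov function for the adjoint generator, combine it with a mixing condition on a compact set, apply Harris's theorem, and read the moment bound on $G(\mu)$ off the drift inequality integrated against the stationary measure. The Lyapunov computation, the role of Lemma~\ref{lem:rmu} in locating the small set $\{|\rho|<z_1,\ |R|\le\max\{1,2\gamma\}z_2,\ |(\rho,R)|\le z_3\}$, and the fact that $D$ then depends only on drift constants all agree with the paper. You also needn't ``iterate'' the drift condition for the moment bound: Lemma~\ref{lem:lyapunov} already produces $\mathcal{L}^*\phi_\beta\le -\tfrac12\beta\Lambda\,\phi_\beta + A\,\sup_{|(\rho',R')|\le r(\mu)}\phi_\beta$, and integrating this against the invariant measure $G(\mu)$ (so that the left side vanishes) gives the stated bound directly; the $\sup$ is already inside the error constant.

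Where you diverge, and where there is a genuine gap, is the mixing step. You propose the classical Doeblin minorization $S_{t_0}^\mu\delta_x\ge\nu\,\eta_0$ on a compact set and assert it follows because H\"ormander's bracket condition yields ``a continuous, strictly positive density.'' That inference is not correct: H\"ormander/Malliavin gives smoothness and joint continuity of the transition kernel, but not strict positivity. Positivity is a controllability statement and requires a separate argument. The paper handles it via the Stroock--Varadhan support theorem (Theorem~\ref{t:SV}), explicitly constructing a control that steers the system from any point of a compact set into any neighbourhood of the fixed target $(\rho_*,R_*)$ with $a_1[\mu](\rho_*)=a_2[\mu](R_*)=0$. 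In fact the paper deliberately uses Mattingly's formulation of Harris's theorem, whose mixing hypothesis (Assumption~\ref{a:harris1}) requires only reachability plus continuity of the transition density rather than a quantitative minorization, precisely to avoid the heavier route you propose; the authors note that a genuine lower bound could be obtained from Bally--Kohatsu-Higa but decline because it is more involved. If you want a Doeblin bound you must supply the controllability argument.

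A smaller point: you claim the contraction constants $C,\lambda$ are independent of $\mu$, and flag uniformity of the minorization in $\mu$ as an obstacle. This is unnecessary for the lemma as stated: only $D$ is claimed to be $\mu$-independent, and the paper later explicitly allows $C,\lambda$ to depend on $\mu$ in the continuity argument for $G$.
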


In this section we work with stochastic tools. Before we begin doing this we relate the linear equation \eqref{eq:linearmain} to a Markov process. In particular, equation \eqref{eq:linearmain} is the Kolmogorov forward equation (equation which evolves the law forward in time) related to the SDE for the continuous in time stochastic processes $R_t$ and $\rho_t$, 
\begin{subequations}
\label{eq:SDEmain}
\begin{align} 
    \mathrm{d}R_t &= a[\mu](\rho_t, R_t)\,\mathrm{d}t,\\
    \mathrm{d}\rho_t &= -a_1[\mu](\rho_t)\,\mathrm{d}t + \sigma\, \mathrm{d}B_t,
\end{align} 
\end{subequations}
where $B$ is a Brownian motion.

We prove both Proposition \ref{prop:Gdefn} and Lemma \ref{lem:Gbound} by applying Harris's theorem. We use the version of Harris's theorem found in \cite{Mattingly}, which we restate here. First we need to state their assumptions. The theorem is for a Markov chain $X_t$ with transition kernel $P_t(x,A)$. Then the semigroup associated to our PDE will be given by $(S_t^\mu f_0)(A) = \int_A \int P_t(x,y)f_0(x)\, \mathrm{d}x \mathrm{d}y = \int_A f_t(y) \,\mathrm{d}y.$ We also have our PDE \eqref{eq:linearmain} is written $\partial_t f = \mathcal{L}f,$ where $\mathcal{L}$ is the generator of $S_t^\mu.$ Then $\mathcal{L}^*$ is the formal adjoint of $\mathcal{L},$ and the generator of the semigroup $(S^{\mu *}_t \phi)(x) = \int \mathcal{P}_t(x,\mathrm{d}y)\phi(y).$
\begin{assumption} \label{a:harris1}
The transition kernel has to satisfy the following two assumptions:
\begin{itemize}
    \item For any compact set $K$, there is some $y_*$, such that for any $\delta>0$ there exists $t(\delta)$ such that $P_{t(\delta)}(x,B(y_*,\delta))>0 \quad \forall x \in K.$
    \item For every $t$ the transition kernel possesses a density $p_t(x,y)$ which is jointly continuous in $x,y$ everywhere.
\end{itemize}
\end{assumption} 
The second assumption is as follows.
\begin{assumption} \label{a:harris2}
For some fixed $T$
    there exists a non-negative function $V$ with $V(x) \rightarrow \infty$ as $|x| \rightarrow \infty$ and two constants $\alpha \in (0,1)$ and $C \geq 0$ such that 
    \[ \mathbb{E}(V(X_{(n+1)T}) \,|\, X_{nT}) \leq \alpha V(X_{nT}) + C. \]
    It is standard to verify this assumption by proving that
    \[ \mathcal{L}^*V \leq - \lambda V + C, \] where $\mathcal{L}^*$ is the formal adjoint of the generator of the semigroup and $\lambda, C >0.$
\end{assumption}
We then have the Theorem
\begin{theorem}[Harris's theorem, as in \cite{Mattingly}]
If we have a Markov process $X_t$ that satisfies both Assumption~\ref{a:harris1} and Assumption~\ref{a:harris2}. Then there exists a unique steady state probability measure for the Markov semigroup $P_t$. Furthermore, we have that there exists constants $C>0$ and $\lambda>0$ such that
\[ \|P_t (f_1 - f_2)\|_{V} \leq C e^{-\lambda t}\|f_1 - f_2\|_{V}.  \] where
$\|f\|_V = \int V(x)|f|(\mathrm{d}x).$
\end{theorem}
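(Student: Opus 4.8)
The plan is to prove the classical Harris ergodic theorem by the coupling argument of \cite{Mattingly}: use the drift condition of Assumption~\ref{a:harris2} to show that the chain is pushed back into a sublevel set of $V$, use the reachability and density continuity hypotheses of Assumption~\ref{a:harris1} to extract a \emph{uniform} minorization on that sublevel set, and then combine the two into a strict contraction of $P_T$ (after possibly replacing $T$ by a larger time) in a suitably weighted Kantorovich distance. Existence and uniqueness of the invariant measure together with the geometric rate then follow from the Banach fixed point theorem, and the statement in the $\|\cdot\|_V$ norm and in continuous time is obtained by duality and a short interpolation.

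In more detail I would proceed as follows. \textbf{Step 1 (iterating the drift condition).} Iterating Assumption~\ref{a:harris2} gives $\mathbb{E}(V(X_{nT})\mid X_0=x)\le \alpha^n V(x)+C/(1-\alpha)$, so for $R_0$ large enough the set $\mathcal{K}:=\{V\le R_0\}$ --- which is compact since $V(x)\to\infty$ --- satisfies $\mathbb{E}(V(X_T)\mid X_0=x)\le\theta_0 V(x)$ whenever $x\notin\mathcal{K}$, for some $\theta_0\in(0,1)$. \textbf{Step 2 (minorization on $\mathcal{K}$).} Applying the first bullet of Assumption~\ref{a:harris1} to the compact set $\mathcal{K}$ yields a point $y_*$ and, for each small $\delta$, a time $t(\delta)$ with $P_{t(\delta)}(x,B(y_*,\delta))>0$ for all $x\in\mathcal{K}$; combining this with the joint continuity of the transition density (second bullet), the Chapman--Kolmogorov equation (to average out the $x$-dependence of the hitting behaviour) and the compactness of $\mathcal{K}$, one extracts a single time $t_0>0$, a constant $\eta_0\in(0,1)$ and a probability measure $\nu$ with
\[ P_{t_0}(x,\cdot)\ge \eta_0\,\nu(\cdot)\qquad\text{for all }x\in\mathcal{K}. \]
Replacing $T$ by a common multiple of $T$ and $t_0$ (and updating the constants) we may assume $t_0=T$. \textbf{Step 3 (the weighted distance).} For a new small parameter $\kappa>0$ define on the state space the distance $d_\kappa(x,y)=0$ if $x=y$ and $d_\kappa(x,y)=2+\kappa V(x)+\kappa V(y)$ otherwise, and let $\mathcal{W}_\kappa$ be the associated Kantorovich distance on the set $\mathcal{P}_V$ of probability measures with finite $V$-integral; $(\mathcal{P}_V,\mathcal{W}_\kappa)$ is complete. \textbf{Step 4 (contraction).} Given two probability measures, couple the two copies of the chain over a time step $T$ by: when both current states lie in $\mathcal{K}$, use the minorization of Step~2 so that the copies coalesce with probability at least $\eta_0$; otherwise run them independently. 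Taking expectations of $d_\kappa$ and using the drift bound $\mathbb{E}(V(X_T)\mid x)\le\alpha V(x)+C$ together with the boundedness of $V$ on $\mathcal{K}$, one checks --- for a suitable simultaneous choice of $\kappa$ small and $R_0$ large, which balances the coalescence case against the case ``away from $\mathcal{K}$'' --- that $\mathbb{E}[d_\kappa(X_T,Y_T)]\le\bar\alpha\,d_\kappa(x,y)$ with $\bar\alpha\in(0,1)$, hence $\mathcal{W}_\kappa(P_T\mu_1,P_T\mu_2)\le\bar\alpha\,\mathcal{W}_\kappa(\mu_1,\mu_2)$.

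\textbf{Step 5 (conclusion).} By the Banach fixed point theorem $P_T$ has a unique fixed point $\mu_\infty\in\mathcal{P}_V$ with $\mathcal{W}_\kappa(P_{nT}\mu,\mu_\infty)\le\bar\alpha^n\mathcal{W}_\kappa(\mu,\mu_\infty)$; since $P_s$ commutes with $P_T$ and preserves $\mathcal{P}_V$, $P_s\mu_\infty$ is again a fixed point of $P_T$ and so equals $\mu_\infty$, i.e.\ $\mu_\infty$ is invariant for the whole semigroup. To pass to the $\|\cdot\|_V$ norm, note that for any coupling $\pi$ of $\mu_1,\mu_2$ and any $g$ with $|g|\le V$ one has $|\mu_1(g)-\mu_2(g)|\le\int(V(x)+V(y))\mathbf{1}_{x\ne y}\,\mathrm{d}\pi\le\kappa^{-1}\int d_\kappa\,\mathrm{d}\pi$, so $\|\mu_1-\mu_2\|_V\le\kappa^{-1}\mathcal{W}_\kappa(\mu_1,\mu_2)$, while conversely $\mathcal{W}_\kappa(\mu_1,\mu_2)$ is controlled by $\|\mu_1-\mu_2\|_V$ up to a constant depending on the (propagated, hence uniformly bounded) $V$-moments --- this is precisely the duality between $\mathcal{W}_\kappa$ and the $(1+\kappa V)$-weighted total variation, the additive constant being harmless since $f_1-f_2$ has zero total mass. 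This converts the $\mathcal{W}_\kappa$-contraction into $\|P_{nT}(f_1-f_2)\|_V\le C\bar\alpha^n\|f_1-f_2\|_V$. Finally, writing $t=nT+s$ with $s\in[0,T)$ and using that $\|P_s\|_{V\to V}$ is bounded uniformly for $s\in[0,T]$ (a consequence of the continuous-time drift inequality $\mathcal{L}^*V\le-\lambda V+C$ mentioned in Assumption~\ref{a:harris2}), one obtains $\|P_t(f_1-f_2)\|_V\le Ce^{-\lambda t}\|f_1-f_2\|_V$ with $\lambda=-\log\bar\alpha/T>0$.

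\textbf{Main obstacle.} The delicate step is Step~2: upgrading the non-uniform, $\delta$-dependent reachability statement ``$P_{t(\delta)}(x,B(y_*,\delta))>0$ for every $x$ in the compact set'' together with mere joint continuity of the densities into a genuine uniform minorization $P_{t_0}(x,\cdot)\ge\eta_0\nu$ with one fixed time $t_0$. This is where the compactness of $\mathcal{K}$ and the Chapman--Kolmogorov property must be used together to absorb the $x$-dependence, and it is the only place where both parts of Assumption~\ref{a:harris1} are needed. The other fiddly point --- the joint choice of $\kappa$ and $R_0$ in Step~4 making the two branches of the coupling yield a common contraction constant --- is a standard optimisation that I would simply quote from \cite{Mattingly}; everything else (iterating the drift inequality, the fixed point argument, the discrete-to-continuous passage and the $\|\cdot\|_V$ duality) is routine.
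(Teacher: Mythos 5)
The paper does not prove this theorem: it is stated verbatim as an imported result and used as a black box, with the proof deferred entirely to the cited reference. So there is no ``paper proof'' to compare against; what you have done is reconstruct the standard argument, and your reconstruction is correct in outline. It is precisely the coupling proof of Harris's theorem in the Hairer--Mattingly style: iterate the drift condition to confine the dynamics to a sublevel set of $V$, extract a uniform minorization there, contract in a $(1+\kappa V)$-weighted Kantorovich/total-variation distance, and conclude by Banach's fixed point theorem plus the discrete-to-continuous interpolation using $\mathcal{L}^*V\le-\lambda V+C$. You have also correctly identified the one genuinely non-routine step: Assumption~\ref{a:harris1} is qualitative (positive probability of hitting a shrinking ball at a $\delta$-dependent time, plus joint continuity of the density), and upgrading this to a single-time uniform bound $P_{t_0}(x,\cdot)\ge\eta_0\nu$ on the compact set is where the cited reference does its real work --- via exactly the combination of compactness, the continuity of $p_t(x,y)$, and Chapman--Kolmogorov that you describe. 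Two minor points to tighten if you were to write this out in full: (i) in Step~2, positivity of $P_{t(\delta)}(x,B(y_*,\delta))$ only gives positivity of the density at \emph{some} point $y_x$ near $y_*$, so the finite subcover produces finitely many different target neighbourhoods, and you must compose two transition steps to land them all in a common set before you can name a single minorizing measure $\nu$; (ii) the conclusion is stated for general differences $f_1-f_2$ in $\|\cdot\|_V$, so the passage from the Kantorovich contraction back to the $V$-weighted total variation norm should be phrased for signed measures of zero mass rather than only for pairs of probability measures, which your duality remark at the end of Step~5 essentially covers. Neither point is a gap in the approach, only in the level of detail.
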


Before verifying the above assumptions we note that a more standard way of writing Harris's theorem is to replace Assumption \ref{a:harris1} by the assumption that the semigroup $S_t^{\mu}$ has a uniformly over $x_0$ in any compact set, lower bound of the form
\[ \mathcal{S}_t^\mu \delta_{x_0} \geq \alpha \nu,  \] where $\alpha \in (0,1)$ and $\nu$ is a probability measure. For our equation an assumption of this form could be verified by applying the result of \cite{BallyKohatsuHiga}. This would also be a more quantitative result. However, precisely applying the Theorem of Bally and Kohatsu-Higa would take a lot of time relative to the less quantitative results given here.\\

We proceed by showing that the semigroup $\mathcal{S}_t^\mu$ satisfies Assumption \ref{a:harris1} and \ref{a:harris2} for any $\mu$.
\begin{lem} \label{lem:doeblin}
The linear semigroup $S_t^\mu$ satisfies Assumption \ref{a:harris1} for any $\mu$.
\end{lem}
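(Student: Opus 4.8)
The plan is to verify the two bullet points of Assumption~\ref{a:harris1} for the diffusion attached to the SDE \eqref{eq:SDEmain}. The continuity-of-density statement will follow from Hörmander's hypoellipticity theorem, and the reachability statement from the Stroock--Varadhan support theorem combined with an explicit controllability construction.

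For the second bullet I would write \eqref{eq:SDEmain} in vector-field form, with diffusion field $X_1 = \sigma\,\partial_\rho$ and drift field $X_0 = -a_1[\mu](\rho)\,\partial_\rho + a[\mu](\rho,R)\,\partial_R$. Both coefficients are $C^\infty$ with all derivatives bounded, since $b$ is smooth with bounded derivatives and $a_1[\mu],a_2[\mu]$ are obtained by integrating $b(\cdot-\cdot')$ and its derivatives against the fixed probability measure $\mu$ (differentiation under the integral is justified). A direct computation gives $[X_0,X_1] = \sigma\,a_1[\mu]'(\rho)\,(\partial_\rho - \partial_R)$, and since $b$ is strictly increasing, $a_1[\mu]'(\rho) = \intR b'(\rho-\rho')\,\mu(\mathrm{d}\rho',\mathrm{d}R') > 0$ for every $\rho$. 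Hence $\{X_1,[X_0,X_1]\}$ spans $\mathbb{R}^2$ at every point, the parabolic Hörmander condition holds, and Hörmander's theorem yields that $P_t(x,\cdot)$ possesses a density $p_t(x,y)$ which is $C^\infty$ (in particular jointly continuous) in $(x,y)$ for every $t>0$.

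For the first bullet I would invoke the Stroock--Varadhan support theorem: $\mathrm{supp}\,P_t(x,\cdot)$ equals the closure of the set of endpoints $\gamma_u(t)$ of solutions of the control system $\dot\rho = -a_1[\mu](\rho) + \sigma u$, $\dot R = a[\mu](\rho,R)$ with $\gamma_u(0)=x$ and $u$ smooth. It therefore suffices, given a compact set $K$, to produce a point $y_*$ such that for each $\delta>0$ there is a time $t(\delta)$ for which every $x\in K$ can be steered into $B(y_*,\delta/2)$ at time exactly $t(\delta)$. Choose $\rho_*$ with $a_1[\mu](\rho_*)=0$ and $R_*$ with $a_2[\mu](R_*)=0$ (these exist since $a_1[\mu]$ and $a_2[\mu]$ are continuous, strictly increasing and change sign; cf.\ Lemma~\ref{lem:rmu}), and set $y_*:=(\rho_*,R_*)$, an equilibrium of the uncontrolled field $X_0$. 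Given $x=(\rho^0,R^0)\in K$, first use the control on a short fixed interval $[0,\tau]$ to steer $\rho$ from $\rho^0$ to $\rho_*$ exactly; this is possible in any prescribed time because $u$ enters $\dot\rho$ directly, and during this phase $R$ moves by at most $\tau\sup|a[\mu]|$, hence stays in a set bounded uniformly over $K$. Then set $u\equiv 0$: the $\rho$-component stays at $\rho_*$ by uniqueness, while $R$ solves $\dot R = -a_2[\mu](R)$, for which $R_*$ is a globally attracting stable equilibrium since $a_2[\mu]$ is increasing and vanishes at $R_*$; thus $R(t)\to R_*$ monotonically. The time needed to enter (and, by monotonicity, thereafter remain in) the set $\{|R-R_*|<\delta/2\}$ is bounded above uniformly over $x\in K$, because $R(\tau)$ ranges over a fixed compact set and $|a_2[\mu]|$ is bounded below by a positive constant $\kappa(\delta)$ on that set intersected with $\{|R-R_*|\ge\delta/2\}$. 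Taking $t(\delta)$ to be this uniform bound, $\gamma_u(t(\delta))\in B(y_*,\delta/2)\subset B(y_*,\delta)$ for every $x\in K$, so $B(y_*,\delta)$ meets $\mathrm{supp}\,P_{t(\delta)}(x,\cdot)$ and hence $P_{t(\delta)}(x,B(y_*,\delta))>0$.

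The main obstacle is the uniformity of $t(\delta)$ over the compact set $K$: reachability of $y_*$ from each individual $x$ is straightforward, but a priori the reaching times depend on $x$ and trajectories need not stay near $y_*$ afterwards. This is resolved precisely by taking $y_*$ to be an equilibrium of the drift lying in the basin of the stable one-dimensional dynamics $\dot R=-a_2[\mu](R)$, so that a trajectory, once inside a small ball around $y_*$, stays there, and by using compactness of $K$ to bound the entry time uniformly. A secondary technical point is the strict positivity of $a_1[\mu]'$ (equivalently $b'>0$) needed for the single Hörmander bracket to span; this holds for the standard choice $b(z)=\tanh(cz)$ and more generally whenever $b$ is strictly increasing.
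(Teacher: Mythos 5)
Your proof follows the same two-pronged strategy as the paper's: Hörmander/Malliavin hypoellipticity via a single bracket (using $\partial_\rho a_1[\mu]>0$) to get joint continuity of the transition density, and the Stroock--Varadhan support theorem with a two-phase controllability construction (steer $\rho$ to the zero $\rho_*$ of $a_1[\mu]$ with the control, then let the uncontrolled drift relax $R$ monotonically to $R_*$) for the reachability bullet, with the same mechanism for making $t(\delta)$ uniform over the compact set $K$. This matches the paper's argument in both structure and substance, and your treatment of the uniformity of $t(\delta)$ is, if anything, spelled out a bit more explicitly.
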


Before proving this lemma, we state two useful theorems from stochastic calculus. 
\begin{theorem}[Malliavin's Theorem, see \cite{Nualart}, Section 7.5]
Given a $d$-dimensional SDE in Stratonovich form
\[ dX_t = V_0(X_t)\,\mathrm{d}t + \sum_{k=1}^m V_k(X_t) \circ \mathrm{d}B^k_t, \] with $m$ independent Brownian motions. Furthermore, we define the set of vector fields
\[ \mathcal{V}_0 = \{ V_0\}, \quad \mathcal{V}_{n+1} = \mathcal{V}_n \cup \{ [V_k, U]\,:\, k=1,\dots, m,  \, U \in \mathcal{V}_{n} \} . \] 
Then if there exists an $m$ such that $\mathcal{V}_m$ spans $\mathbb{R}^2$ at each $x$, then the stochastic process $X_t$ has a jointly continuous transition kernel.
\end{theorem}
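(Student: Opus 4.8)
\noindent\emph{Proof idea.} This is the probabilistic proof of H\"ormander's theorem via Malliavin calculus, in the form of \cite[Ch.~2 and Sec.~7.5]{Nualart}; I would reproduce its structure, noting that in our application the $V_k$ are smooth with bounded derivatives of every order, so the regularity inputs below are immediate. Write $X_t = X_t^x$ for the solution started at $x$, let $J_t = \partial X_t^x/\partial x$ be the first variation (Jacobian) flow, which solves a linear SDE, and recall that $J_t$ and $J_t^{-1}$ then have moments of every order. First I would show $X_t \in \mathbb{D}^\infty$, i.e.\ $X_t$ is infinitely Malliavin differentiable with all derivatives in $\bigcap_{p<\infty}L^p$: differentiating the SDE along the Brownian path shows that $D_s X_t$ solves a linear SDE whose ``initial value'' is the matrix $V(X_s) = (V_1(X_s),\dots,V_m(X_s))$, and in fact $D_s X_t = J_t J_s^{-1} V(X_s)$ for $s\le t$; higher derivatives are handled the same way, with $L^p$ bounds from Gronwall and Burkholder--Davis--Gundy estimates.

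Next I would invoke the abstract smoothness criterion: if $F\in\mathbb{D}^\infty$ and its Malliavin covariance matrix $\gamma_F = \big(\langle DF^i, DF^j\rangle_{L^2([0,t])}\big)_{i,j}$ satisfies $(\det\gamma_F)^{-1}\in\bigcap_{p<\infty}L^p$, then $F$ has a density in $C^\infty$. The proof is the Bismut integration-by-parts formula: for smooth compactly supported $\varphi$ and any multi-index $\alpha$ one has $\mathbb{E}[(\partial^\alpha\varphi)(F)] = \mathbb{E}[\varphi(F)\,H_\alpha]$, with weights $H_\alpha$ assembled from $DF$, $\gamma_F^{-1}$ and the divergence (Skorokhod) operator and lying in $\bigcup_{p>1}L^p$; a Fourier/duality argument then produces the smooth density.

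The crux is the non-degeneracy estimate $(\det\gamma_{X_t})^{-1}\in\bigcap_p L^p$. Using $D_s X_t = J_t J_s^{-1}V(X_s)$ one gets $\gamma_{X_t} = J_t C_t J_t^\top$ with the reduced covariance $C_t = \int_0^t J_s^{-1}V(X_s)V(X_s)^\top (J_s^{-1})^\top\,\mathrm{d}s$; since $\det J_t$ already has negative moments of every order it suffices to bound $\mathbb{P}\big(\inf_{|\xi|=1}\xi^\top C_t\xi < \varepsilon\big)$. Fix a unit $\xi$ and suppose $\xi^\top C_t\xi\le\varepsilon$, so $\int_0^t \langle\xi, J_s^{-1}V_k(X_s)\rangle^2\,\mathrm{d}s\le\varepsilon$ for each $k$. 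For a vector field $W$ set $Y^W_s = \langle\xi, J_s^{-1}W(X_s)\rangle$; It\^o's formula gives that $Y^W$ is a semimartingale whose diffusion coefficients are the $Y^{[V_k,W]}$ and whose drift involves $Y^{[V_0,W]}$ and the $Y^{[V_k,[V_k,W]]}$. Norris's lemma---the quantitative statement that an It\^o process which is uniformly small must have uniformly small drift and diffusion coefficients, with only a polynomial loss in probability---then propagates smallness of $Y^W$ to smallness of $Y^U$ for every $U$ obtained from $W$ by one further bracketing. Iterating and using a modulus-of-continuity estimate to pass from $\int_0^t (Y^W_s)^2\,\mathrm{d}s$ small to $Y^W_0 = \langle\xi, W(x)\rangle$ small, the spanning hypothesis $\mathrm{span}\{W(x): W\in\mathcal{V}_m\} = \mathbb{R}^2$ forces $|\xi|$ small, a contradiction for $\varepsilon$ small. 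Covering the unit sphere yields $\mathbb{P}(\inf_{|\xi|=1}\xi^\top C_t\xi<\varepsilon)\le C_p\varepsilon^p$ for every $p$, and with the criterion above $X_t^x$ then has a $C^\infty$ density $p_t(x,\cdot)$.

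Finally I would upgrade to joint continuity in $(x,y)$: the map $x\mapsto X_t^x$ is smooth into $\mathbb{D}^\infty$, and hence so are $J_t$, $J_t^{-1}$, $C_t$, $\gamma_{X_t}^{-1}$ and the integration-by-parts weights, with $L^p$ bounds locally uniform in $x$. Representing the density via the iterated integration-by-parts formula $p_t(x,y) = \mathbb{E}[\mathbf{1}_{\{X_t^x > y\}}\,H(x)]$ (the indicator understood componentwise, $H$ the corresponding Skorokhod-integral weight), $L^p$-continuity of $x\mapsto(X_t^x, H(x))$ and dominated convergence give continuity in $x$ uniformly in $y$, while continuity in $y$ with locally uniform-in-$x$ bounds is the smoothness from the criterion; hence $p_t$ is jointly continuous, which is the assertion. \textbf{The main obstacle} is the third step: the iterative use of Norris's lemma to transfer quantitative non-degeneracy through successive Lie brackets while keeping the probability bounds uniform over the direction $\xi$. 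The Malliavin differentiability, the abstract density criterion, and the smooth dependence on the starting point are comparatively routine.
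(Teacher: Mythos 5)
The paper does not prove this statement --- it is quoted verbatim from Nualart's book as an external result --- and your sketch is precisely the standard Malliavin-calculus proof of H\"ormander's theorem given in that source: Malliavin differentiability of the flow with $D_sX_t = J_tJ_s^{-1}V(X_s)$, the abstract smooth-density criterion via iterated integration by parts, Norris's lemma to obtain negative moments of the reduced covariance matrix under the bracket-spanning condition, and smooth $L^p$-dependence on the starting point to upgrade to joint continuity of $p_t(x,y)$. I see no gaps; your identification of the Norris-lemma iteration as the crux is also accurate.
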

\begin{theorem}[Strook-Varadhan support theorem, see \cite{StroockVaradhan}]\label{t:SV}
Given a $d$-dimensional SDE in Stratonovich form
\[ dX_t = V_0(X_t)\,\mathrm{d}t + \sum_{k=1}^m V_k(X_t) \circ \mathrm{d}B^k_t, \quad X_0 =x, \] the support of the law of $X_t$ is the closure of the set of points reached at time $t$ by the ODE
\[ \frac{\mathrm{d}x(t)}{\mathrm{d}t} = V_0(x(t)) + \sum_{k=1}^m V_k(x(t)) \frac{\mathrm{d}h_k(t)}{\mathrm{d}t}, \] when we let the $h_k$ range over the set of continuously differentiable functions.
\end{theorem}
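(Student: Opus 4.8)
The final statement is the classical Stroock--Varadhan support theorem, so the plan is to sketch its standard proof (assuming the usual regularity on the $V_k$, e.g.\ smooth with bounded derivatives, so that both the SDE and the controlled ODE are globally well posed). Fix $T>0$, write $S(h)\in C([0,T];\mathbb{R}^d)$ for the solution of $\dot y(t)=V_0(y(t))+\sum_{k=1}^m V_k(y(t))\dot h_k(t)$, $y(0)=x$, with $h\in C^1([0,T];\mathbb{R}^m)$, and $X(B)$ for the Stratonovich solution started at $x$. The goal is the equality $\operatorname{supp}(\mathrm{Law}(X_T))=\overline{\mathcal R}$, where $\mathcal R:=\{S(h)(T):h\in C^1\}$, and I would prove it through the two inclusions $\overline{\mathcal R}\subseteq\operatorname{supp}(\mathrm{Law}(X_T))$ and $\operatorname{supp}(\mathrm{Law}(X_T))\subseteq\overline{\mathcal R}$.

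For $\overline{\mathcal R}\subseteq\operatorname{supp}(\mathrm{Law}(X_T))$, fix $h\in C^1$; it suffices to show that each ball around $S(h)(T)$ has positive probability. The two ingredients are (i) the support theorem for Brownian motion, $\mathbb{P}(\sup_{t\le T}|B_t-h(t)|<\epsilon)>0$ for all $\epsilon>0$, which follows from the small-ball bound $\mathbb{P}(\sup_{t\le T}|B_t|<\epsilon)>0$ together with a Cameron--Martin/Girsanov change of measure $\mathbb{Q}\sim\mathbb{P}$ under which $B-h$ is a Brownian motion; and (ii) a continuity property of the solution map: if $B$ is close to $h$ in a suitable topology, then $X(B)$ is uniformly close to $S(h)$ on $[0,T]$. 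Combining (i) and (ii), for every $\eta>0$ there is a positive-probability event on which $|X_T-S(h)(T)|<\eta$, so $S(h)(T)\in\operatorname{supp}(\mathrm{Law}(X_T))$; taking the closure gives the inclusion.

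For $\operatorname{supp}(\mathrm{Law}(X_T))\subseteq\overline{\mathcal R}$, it is enough to show $X_T\in\overline{\mathcal R}$ almost surely. Let $B^{(n)}$ be the piecewise-linear interpolation of $B$ on the uniform mesh of width $T/n$; it is piecewise $C^1$ (and, after a small mollification, may be taken in $C^1$), so $S(B^{(n)})(T)\in\mathcal R$. The Wong--Zakai theorem --- which holds here precisely because the equation is in Stratonovich form, so the correction generated by smooth approximations is exactly the one already built into the Stratonovich solution --- gives $S(B^{(n)})\to X$ uniformly on $[0,T]$ in probability. Passing to an almost surely convergent subsequence yields $X_T\in\overline{\mathcal R}$ a.s., hence $\operatorname{supp}(\mathrm{Law}(X_T))\subseteq\overline{\mathcal R}$.

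The main obstacle is ingredient (ii): the It\^o solution map $B\mapsto X(B)$ is \emph{not} continuous from $(C([0,T];\mathbb{R}^m),\|\cdot\|_\infty)$ to $C([0,T];\mathbb{R}^d)$, which is exactly why the theorem is nontrivial. Overcoming this requires either the explicit Doss--Sussmann representation of the solution --- available, and continuous in the uniform topology, when the $V_k$ commute, in particular whenever there is a single driving noise ($m=1$) --- or, in general, rough-path theory (lift $B$ to the Brownian rough path, use continuity of the It\^o--Lyons map, and invoke the support theorem for the Brownian rough path). For the equation relevant to this paper, \eqref{eq:SDEmain}, there is one Brownian motion entering through the constant field $V_1=(\sigma,0)^\top$, so $[V_1,V_1]=0$ and the Doss--Sussmann reduction writes $X_t$ as a smooth function of $B_t$ and of the solution of a random ODE with globally Lipschitz coefficients; in that case the continuity in (ii) --- and hence the version of the support theorem actually used in the sequel --- is elementary.
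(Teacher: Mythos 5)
This statement is the classical Stroock--Varadhan support theorem, which the paper imports from \cite{StroockVaradhan} without proof, so there is no internal argument to compare against; your proposal is a proof sketch of the cited result itself. As such a sketch it follows the standard route and is essentially sound: the inclusion $\overline{\mathcal R}\subseteq\operatorname{supp}(\mathrm{Law}(X_T))$ via the Cameron--Martin/Girsanov support theorem for Brownian motion combined with a (conditional) closeness estimate for the solution map, and the reverse inclusion via Wong--Zakai approximation along piecewise-linear interpolations, with the Stratonovich form absorbing the correction term. You are also right to single out ingredient (ii) as the real content: as literally stated (``if $B$ is close to $h$ then $X(B)$ is close to $S(h)$'') it is false for the It\^o/Stratonovich solution map in the uniform topology, and the original Stroock--Varadhan argument replaces it with delicate estimates on stochastic integrals conditioned on the Brownian path staying in a small tube --- something slightly stronger than the ``continuity'' framing suggests --- while the modern fixes are exactly the ones you name (Doss--Sussmann for commuting fields, rough-path continuity of the It\^o--Lyons map in general). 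Your closing observation is the most valuable part for this paper: in \eqref{eq:SDEmain} there is a single Brownian motion entering through the constant vector field $(0,\sigma)^\top$ in the $\rho$-component, so the Doss--Sussmann representation reduces everything to a random ODE with Lipschitz coefficients, and the version of the support theorem actually invoked in the proof of Lemma \ref{lem:doeblin} is elementary. The only caveat is that your general-case argument remains a sketch (regularity hypotheses on the $V_k$ are assumed rather than matched to the theorem statement, and the conditional estimate is delegated to the literature), but since the paper itself treats the theorem as a black box, this is an acceptable level of detail.
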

\begin{proof}[Proof of Lemma \ref{lem:doeblin}]
This part of the proof is very similar in spirit to the proof of the analagous result for the Langevin equation found in \cite{Mattingly}. The second part of the assumption is an immediate consequence of the hypoellipticity of the equation. We can see that the SDE \ref{eq:SDEmain} satisfies the assumptions to apply Malliavin's version of H\"ormander's theorem.
The vector field $\partial_\rho$ and $[\partial_\rho, a[\mu]\partial_R] = (\partial_\rho a[\mu] (\rho)) \partial_R$ span the tangent space at every point $(\rho, R)$ as $\partial_\rho a[\mu]>0.$ 
 
 For the first part, as in Higham, Stuart and Mattingly \cite{Mattingly}, we use the Strook-Varadhan support theorem \cite{StroockVaradhan}. Given this theorem we fix the point $(\rho_*, R_*)$, which depends on $\mu$, chosen so that $a_1[\mu](\rho_*) = a_2[\mu](R_*) = 0.$ Then fix $\delta$ and a compact set $\mathcal{K}$. We have the control system
 \[ \left( \begin{array}{c} \mathrm{d}R/\mathrm{d}t \\ \mathrm{d}\rho/\mathrm{d}t \end{array}\right) = \left( \begin{array}{c}a_1[\mu](\rho) - a_2[\mu](R) \\ -\gamma a_1[\mu](\rho) + \sigma \mathrm{d}v/\mathrm{d}t \end{array}\right). \] As $a_1[\mu](\rho)$ is a smooth function this has the same reachable sets as the control system
 \begin{equation}
     \left( \begin{array}{c} \mathrm{d}R/\mathrm{d}t \\ \mathrm{d}\rho/\mathrm{d}t \end{array}\right) = \left( \begin{array}{c} a_1[\mu](\rho) - a_2[\mu](R) \\  \mathrm{d}\tilde{v}/\mathrm{d}t \end{array}\right).
 \end{equation}
 Now for any $\rho_0$ we can find $\tilde{v}$ such that $\rho_0 + \tilde{v}(1) = \rho_*$ and $\mathrm{d}\tilde{v}/\mathrm{d}t = 0$ for $t>1$ and $\tilde{v}$ is smooth. With this control we have that $R(1)$ is somewhere in a ball of radius $c$ around $R_0$, so if $R_0$ was in the original compact set $K$, $R(1)$ is now in a new compact set, $K'$. Now there exist a $T = T(\delta, K)$ such that after time $T$ the ODE
 \[ \frac{\mathrm{d}R}{\mathrm{d}t} = -a_2[\mu](R)  \] when started inside $K'$ will be in $B(R_*, \delta)$. Therefore, our control $\tilde{v}$ will move the equivalent control system to a point in $B((\rho_*,R_*),\delta)$ after time $T+1$. Consequently, the Strook-Varadhan support theorem \ref{t:SV} implies that
 \[ P((\rho_t, R_t) \in B(\rho_*, R_*, \delta))>0 \] if $t>T+1.$
\end{proof}

We now move onto the second assumption.
\begin{lem} \label{lem:lyapunov}
The function $\phi_\beta$
is a Foster-Lyapunov function for the semigroup $S_t^\mu$, for $\beta$ sufficiently small. That is to say if $\mathcal{L}$ is the generator associated to $S_t^\mu,$ 
\[ \mathcal{L}^* \phi_\beta \leq -\lambda \phi_\beta + A1_{|(\rho, R)|\leq B}, \] for some strictly positive constants $\lambda, A, B,$ again for $\beta$ sufficiently small. 
\end{lem}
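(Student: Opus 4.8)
The plan is to compute $\mathcal{L}^*\phi_\beta$ directly and show that the negative contribution from the diffusion/confining terms dominates, at least outside a compact set. Recall that the forward generator is $\mathcal{L}f = -\partial_R(a[\mu]f) + \partial_\rho\bigl(\tfrac{\sigma^2}{2}\partial_\rho f + \gamma a_1[\mu](\rho)f\bigr)$, so its formal adjoint acting on a smooth weight is
\begin{equation*}
\mathcal{L}^*\phi = a[\mu](\rho,R)\,\partial_R\phi - \gamma a_1[\mu](\rho)\,\partial_\rho\phi + \frac{\sigma^2}{2}\partial_\rho^2\phi.
\end{equation*}
Write $\phi_\beta = e^{\beta w}$ with $w = \sqrt{Q}$, $Q = 1 + 4\rho^2/\gamma + 2\rho R + \gamma R^2$. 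The first thing to check is that $Q$ is a positive definite quadratic form in $(\rho,R)$ up to the constant $1$ (the matrix is $\begin{psmallmatrix}4/\gamma & 1\\ 1 & \gamma\end{psmallmatrix}$, with determinant $3>0$), so $w \asymp |(\rho,R)|$ for large arguments and $\phi_\beta$ is a genuine confining weight; this is exactly what makes $Q$ the natural choice here rather than $|(\rho,R)|^2$ — the cross term $2\rho R$ is tuned so that the drift $-\gamma a_1[\mu](\rho)\partial_\rho$ together with the transport $a_1[\mu](\rho)\partial_R$ produces a coherent sign. Then I would compute $\partial_\rho\phi_\beta = \beta\phi_\beta\,\partial_\rho w$, $\partial_\rho^2\phi_\beta = \phi_\beta(\beta^2(\partial_\rho w)^2 + \beta\partial_\rho^2 w)$, and similarly $\partial_R\phi_\beta = \beta\phi_\beta\,\partial_R w$, where $\partial_\rho w = (4\rho/\gamma + R)/w$, $\partial_R w = (\rho + \gamma R)/w$.

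Substituting, $\mathcal{L}^*\phi_\beta = \phi_\beta\bigl[ \beta a[\mu]\,\partial_R w - \gamma\beta a_1[\mu](\rho)\,\partial_\rho w + \tfrac{\sigma^2}{2}(\beta^2(\partial_\rho w)^2 + \beta\,\partial_\rho^2 w)\bigr]$. The term I want to exploit is the combination of the two drift terms. Using $a[\mu] = a_1[\mu](\rho) - a_2[\mu](R)$ and the explicit forms of $\partial_\rho w, \partial_R w$, the leading (in $|(\rho,R)|$) part of $\beta^{-1}\phi_\beta^{-1}\mathcal{L}^*\phi_\beta$ is
\begin{equation*}
a_1[\mu](\rho)\,\frac{\rho + \gamma R}{w} - \gamma a_1[\mu](\rho)\,\frac{4\rho/\gamma + R}{w} - a_2[\mu](R)\,\frac{\rho+\gamma R}{w} + O(\beta) + O(1/w).
\end{equation*}
The first two terms combine to $a_1[\mu](\rho)\cdot\frac{(\rho+\gamma R) - (4\rho + \gamma R)}{w} = a_1[\mu](\rho)\cdot\frac{-3\rho}{w}$, which by Lemma \ref{lem:rmu} (and the fact that $a_1[\mu](\rho)$ has the same sign as $\rho$, being odd and increasing with $\rho$) is $\leq -\tfrac{3}{2}\cdot\tfrac{|\rho|}{w}$ once $|\rho| > z_1$. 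Likewise $-a_2[\mu](R)\frac{\rho+\gamma R}{w}$: here I need to be more careful, but by Lemma \ref{lem:rmu}, $a_2[\mu](R)$ has the sign of $R$ and magnitude $>1/2$ once $|R|>z_2$, so this term is $\leq -\tfrac12\cdot\frac{\gamma R^2 - |\rho R|}{w}$, which for $|\rho|$ bounded relative to $|R|$ (or after absorbing $|\rho R| \leq \varepsilon R^2 + C_\varepsilon\rho^2$) is again strictly negative for large $|R|$. The bounded curvature term $\tfrac{\sigma^2\beta}{2}\partial_\rho^2 w$ and the quadratic term $\tfrac{\sigma^2\beta}{2}(\partial_\rho w)^2$ are both bounded (since $\partial_\rho w \to$ const and $\partial_\rho^2 w \to 0$ as $|(\rho,R)|\to\infty$), so for $\beta$ small enough they are dominated by the negative linear-in-$|(\rho,R)|$ contribution just identified. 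Hence there is a constant $\lambda_0>0$ with $\beta^{-1}\phi_\beta^{-1}\mathcal{L}^*\phi_\beta \leq -\lambda_0$ outside a large ball, i.e.\ $\mathcal{L}^*\phi_\beta \leq -\lambda\phi_\beta$ there with $\lambda = \beta\lambda_0$; on the complementary compact ball $|(\rho,R)|\leq B$, $\mathcal{L}^*\phi_\beta$ is continuous hence bounded by some $A$, giving the claimed inequality $\mathcal{L}^*\phi_\beta \leq -\lambda\phi_\beta + A\mathbf{1}_{|(\rho,R)|\leq B}$.

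The main obstacle I anticipate is handling the cross terms carefully: the weight mixes $\rho$ and $R$, and the $a_2[\mu](R)$ drift only controls $R$ while the diffusion only helps in $\rho$, so one must verify that the mixed quadratic contributions ($|\rho R|/w$ etc.) genuinely get absorbed by the pure $\rho^2/w$ and $R^2/w$ terms uniformly in $\mu$ — this is where the precise coefficients $4/\gamma$, $2$, $\gamma$ in $Q$ matter, and one should check the quadratic form $-3\rho(\rho+\gamma R)/w + \ldots$ type expression (before using the sign of $a_1, a_2$) or, more robustly, split the plane into the regions $|\rho|>z_1$ and $|R|>z_2$ and argue on each. A secondary point is that all constants $z_1, z_2$ from Lemma \ref{lem:rmu}, and hence $\lambda, A, B$, depend on $\mu$ a priori; for the application (Lemma \ref{lem:Gbound}) one needs $\lambda$ independent of $\mu$, which holds because the asymptotic values of $a_1[\mu], a_2[\mu]$ are $\pm c$ regardless of $\mu$ (only the threshold radii depend on $\mu$), so the decay rate $\lambda$ can be taken uniform even though $B$ cannot — I would make this uniformity explicit since it is used later.
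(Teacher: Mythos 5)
Your approach matches the paper's almost exactly: you compute $\mathcal{L}^*\phi_\beta/\phi_\beta$, isolate the drift contribution $(-3a_1[\mu](\rho)\rho - \gamma a_2[\mu](R)R - a_2[\mu](R)\rho)/\sqrt{Q}$, use Lemma~\ref{lem:rmu} to pin down the sign of the $-3a_1[\mu]\rho$ and $-\gamma a_2[\mu]R$ terms outside thresholds $z_1,z_2$, observe the remaining diffusion/curvature terms are $O(\beta)$ or vanish as $|(\rho,R)|\to\infty$, and conclude by choosing $\beta$ small and the ball $B$ large. The paper, like you, treats the problematic cross term $-a_2[\mu](R)\rho$ by bounding $|a_2|\le c$ and then splitting into three regions ($|\rho|>z_1, |R|>z_1$; $|\rho|>z_1, |R|<z_2$; $|\rho|<z_1, |R|>\max\{1,2/\gamma\}z_2$), which is exactly the ``split the plane'' fallback you flag at the end, so you identified the right obstacle.

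Two small inaccuracies worth flagging. First, the bound you state for the $a_2$ term, $-a_2[\mu](R)\frac{\rho+\gamma R}{w}\le-\tfrac12\cdot\frac{\gamma R^2-|\rho R|}{w}$, has the wrong degree: the numerator $\rho+\gamma R$ is linear, so the correct estimate (when $|R|>z_2$) is of the form $-a_2(\rho+\gamma R)\le -\tfrac12\gamma|R|+c|\rho|$, i.e.\ $\leq\frac{-\gamma|R|/2+c|\rho|}{w}$, not a quadratic expression. Second, the phrase ``negative linear-in-$|(\rho,R)|$ contribution'' is misleading: after dividing by $\phi_\beta$ and by $w$, the drift contribution is \emph{bounded}, not growing, which is why $\lambda$ ends up proportional to $\beta$ and why the Foster--Lyapunov drift is geometric (exponential weight) rather than, say, Gaussian. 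Neither slip changes the structure of the argument, and your final observation about $\mu$-uniformity of $\lambda$ (via the $\mu$-independent asymptotic values of $a_1,a_2$) is correct and indeed used for Lemma~\ref{lem:Gbound}.
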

\begin{proof}
First we compute
\begin{align*}
    \frac{\mathcal{L}^*\phi_{\beta}}{\phi_{\beta}} &= \frac{\beta \left( -3a_1[\mu](\rho)\rho - \gamma a_2[\mu](R)R -a_2[\mu](R)\rho \right)}{\sqrt{1+4\rho^2/\gamma + 2\rho R + \gamma R^2}} \\
    &\quad + \frac{\sigma^2}{2} \left(\frac{3\beta R^2}{(1+4\rho^2/\gamma + 2\rho R + \gamma R^2)^{3/2}} + \frac{\beta^2 (R+4\rho/\gamma)^2}{1+4\rho^2/\gamma + 2\rho R + \gamma R^2} \right)
\end{align*}
This implies the following bound from above
\begin{align*}
    \frac{\mathcal{L}^*\phi_{\beta}}{\phi_{\beta}} & \leq \frac{\beta \left( -3a_1[\mu](\rho)\rho - \gamma a_2[\mu](R)R +c|\rho| \right)}{\sqrt{1+4\rho^2/\gamma + 2\rho R + \gamma R^2}}  + \frac{2 \sigma^2 \beta}{\gamma} \left(\frac{1}{\sqrt{1+4\rho^2/\gamma + 2\rho R + \gamma R^2}} + \beta \right).
\end{align*}
Using Lemma \ref{lem:rmu} we have that whenever $|\rho|>z_1$ and $|R|>z_1$ that
\[ -3a_1[\mu](\rho)\rho - \gamma a_2[\mu](R)R +c|\rho|  \leq -c|\rho|/2-c\gamma|R|/2.  \]
Furthermore, if $|\rho|>z_1$ and $|R| < z_2$, then
\[ -3a_1[\mu](\rho)\rho - \gamma a_2[\mu](R)R +c|\rho|  \leq -c|\rho|/2. \]
And if $|\rho|< z_1$ and $|R|> \max\{1, 2/\gamma \}z_2$, we have
\[ -3a_1[\mu](\rho)\rho - \gamma a_2[\mu](R)R +c|\rho|  \leq - \gamma c |R|/4. \] Therefore, there exists some $\Lambda$ which doesn't depend on $\mu$ such that if $|\rho|>z_1$ or $|R|> \max\{1, 2/\gamma\}z_2$, we have
\[ \frac{\mathcal{L}^*\phi_{\beta}}{\phi_{\beta}} \leq - \beta \Lambda +  + \frac{2 \sigma^2 \beta}{\gamma} \left(\frac{1}{\sqrt{1+4\rho^2/\gamma + 2\rho R + \gamma R^2}} + \beta \right). \] Therefore, if $\beta$ is small enough and $|\rho|>z_1, |R|> \max\{ 1, 2/\gamma\} z_2$ and $\rho, R$ large enough so that the second term is small, specifically
\[ |(\rho,R)| \geq z_3:= \frac{\frac{\Lambda\gamma}{4\sigma^2}-\beta}{\sqrt{2/\gamma + \gamma/2}}, \] then
\[ \frac{\mathcal{L}^*\phi_{\beta}}{\phi_{\beta}} \leq - \beta \Lambda /2. \] Therefore, we have
\[  \mathcal{L}^*\phi_{\beta} \leq - \frac{1}{2}\beta \Lambda \phi_{\beta}  +\beta \left(\sqrt{\gamma/3}c + 2\sigma^2 \frac{(1+\beta)}{\gamma} + \frac{\Lambda}{2} \right) \sup_{|\rho| \leq z_1, |R| \leq \max\{1,2\gamma\} z_2, |(\rho, R)|\leq z_3}\phi_{\beta}(\rho', R').  \] These calculations show that $\phi_{\beta}$ satisfies the conditions to be a Foster-Lyapunov function when $\beta$ is sufficiently small.
\end{proof}
 
 We now prove both Proposition \ref{prop:Gdefn} and Lemma \ref{lem:Gbound} simultaneously.
 \begin{proof}[Proof of Proposition \ref{prop:Gdefn} and Lemma \ref{lem:Gbound}]
 Using Lemma \ref{lem:doeblin} and Lemma \ref{lem:lyapunov} we have that the Markov process defined by \eqref{eq:SDEmain} satisfies the condition of Harris's theorem with Lyapunov function $\phi_{\beta}$. This gives the existence and uniqueness of a steady state measure for this SDE, and gives the convergence result in the first part of Lemma \ref{lem:Gbound}. For the other bound in Lemma \ref{lem:Gbound} we recall the final inequality from the proof of Lemma \ref{lem:lyapunov}, \[  \mathcal{L}^*\phi_{\beta} \leq - \frac{1}{2}\beta \Lambda \phi_{\beta}  +\beta \left(\sqrt{\gamma/3}c + 2\sigma^2 \frac{(1+\beta)}{\gamma} + \frac{\Lambda}{2} \right) \sup_{|\rho| \leq z_1, |R| \leq \max\{1,2\gamma\} z_2}\phi_{\beta}(\rho', R').  \] These calculations show that when $\beta$ is sufficiently small, we have, for $G(\mu)$ being the steady state,
\[ \intR G(\mu) \phi_{\beta} \, \dr \dR \leq \frac{2}{\Lambda}\left(\sqrt{\gamma/3}c + 2\sigma^2 \frac{(1+\beta)}{\gamma} + \frac{\Lambda}{2} \right) \sup_{|(\rho', R')|\leq r(\mu)}\phi_{\beta}(\rho', R'). \] This bound only depends on $\mu$ through its explicit dependence on $z_1, z_2$.
 \end{proof}
 
\subsubsection{Finding a set which is preserved by the function $G$}
In this section we prove the following proposition.
\begin{prop} \label{prop:preservedset}
Let $M$ be sufficiently large, $\beta$ sufficiently small and $b$ satisfies Assumption \ref{assumptionb}. Then the sets $\mathcal{C}_{M,\beta}$,  defined in Definition \ref{dfn:C} are preserved by the map $G$.
\end{prop}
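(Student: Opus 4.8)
The plan is to show that if $\mu \in \mathcal{C}_{M,\beta}$, i.e. $\|\mu\|_\beta = \intR \phi_\beta \, |\mu|(\dr,\dR) \le M$, then $\|G(\mu)\|_\beta \le M$ as well, provided $M$ is large and $\beta$ small. The starting point is the bound from Lemma~\ref{lem:Gbound},
\[ \intR G(\mu) \phi_\beta \, \dr \dR \le D \sup_{|(\rho',R')| \le r(\mu)} \phi_\beta(\rho',R'), \]
so everything comes down to controlling the radius $r(\mu)$ in terms of $M$. Recall $r(\mu)$ is governed by the thresholds $z_1 = z_1(\mu)$ and $z_2 = z_2(\mu)$ from Lemma~\ref{lem:rmu}, namely the smallest values past which $|a_1[\mu]| > 1/2$ and $|a_2[\mu]| > 1/2$. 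So first I would quantify how large $z_1, z_2$ can be given $\|\mu\|_\beta \le M$.

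The key estimate uses Assumption~\ref{assumptionb}: since $b$ is odd with $|1 - b(|z|)| \le C e^{-\alpha \langle z\rangle}$, we have for $\rho$ large and positive
\[ a_1[\mu](\rho) = \intR b(\rho - \rho') \mu(\dr',\dR') = 1 - \intR \bigl(1 - b(\rho-\rho')\bigr)\mu(\dr',\dR'), \]
and the correction term is bounded (splitting the integral at $|\rho'| \le \rho/2$ and using the exponential decay of $1-b$ together with $e^{\alpha\langle\rho'\rangle} \le \phi_{\text{something}}$ controlled by the moment) by roughly $C e^{-\alpha \rho/2}\big(1 + \text{(a moment of }\mu)\big)$. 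The moment of $\mu$ that appears is $\intR e^{\alpha\langle \rho'\rangle}\mu$, which is dominated by $\|\mu\|_\beta \le M$ as long as $\alpha\langle\rho'\rangle \le \beta\sqrt{1 + 4\rho'^2/\gamma + \dots}$, i.e. as long as $\beta$ is chosen small relative to $\alpha$ (and $\gamma$). Hence $|a_1[\mu](\rho)| > 1/2$ once $e^{-\alpha\rho/2} \lesssim 1/M$, i.e. once $\rho \gtrsim \frac{2}{\alpha}\log(CM)$. Therefore $z_1(\mu) \le \frac{C'}{\alpha}\log(C''M)$, and similarly $z_2(\mu) \le \frac{C'}{\alpha}\log(C''M)$. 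Then $r(\mu) \le C_\gamma \log(C M)$ for a constant depending only on $\gamma, \alpha$ and the fixed data.

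Plugging this into the Lemma~\ref{lem:Gbound} bound and using the explicit form of $\phi_\beta$, which grows like $e^{\beta \cdot O(|(\rho,R)|)}$, gives
\[ \intR G(\mu)\phi_\beta \, \dr \dR \le D \exp\bigl(\beta \cdot C_\gamma' r(\mu)\bigr) \le D \exp\bigl(\beta C_\gamma'' \log(CM)\bigr) = D (CM)^{\beta C_\gamma''} = C_1 M^{\eta}, \]
with $\eta = \beta C_\gamma''$. This is exactly the claim flagged in Step~3 of the outline: $\eta$ is explicit in $\beta$ (and implicitly $\alpha, \gamma$), and for $\beta$ small enough $\eta < 1$. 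Once $\eta < 1$, choosing $M$ with $C_1 M^\eta \le M$, i.e. $M \ge C_1^{1/(1-\eta)}$, shows $\|G(\mu)\|_\beta \le M$, so $G(\mathcal{C}_{M,\beta}) \subseteq \mathcal{C}_{M,\beta}$. Since $G(\mu)$ is a probability measure (from Proposition~\ref{prop:Gdefn}) it lies in $\mathcal{P}_\beta$, completing the proof.

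The main obstacle is the quantitative moment bookkeeping in the second paragraph: one must verify carefully that $\beta$ can be chosen small enough that simultaneously (a) $\phi_\beta$ is a genuine Foster--Lyapunov function (Lemma~\ref{lem:lyapunov} — already needs $\beta$ small), (b) the exponential weight $e^{\alpha\langle\rho'\rangle}$ arising from Assumption~\ref{assumptionb} is absorbed by $\phi_\beta$ on the relevant region, and (c) the resulting exponent $\eta = \eta(\alpha,\beta,\gamma)$ is strictly below $1$. All three are compatible because they all push $\beta$ in the same direction (smaller), but writing the constants so that the logarithmic dependence $r(\mu) = O(\log M)$ is clean — rather than, say, polynomial in $M$, which would kill the argument — is where the real care lies. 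Everything else is a routine assembly of the previously established lemmas.
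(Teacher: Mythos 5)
Your plan is structurally the same as the paper's: bound $z_1, z_2$ (hence $r(\mu)$) logarithmically in $M$, feed this into Lemma~\ref{lem:Gbound}, deduce $\|G(\mu)\|_\beta \le C M^\eta$, and then choose $\beta$ small and $M$ large. However, a key inequality in your moment bookkeeping goes the wrong way, and this changes the form of $\eta$ in a way that masks the real reason the argument closes.

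You claim that $e^{\alpha\langle\rho'\rangle}$ is ``absorbed by $\phi_\beta$\ldots as long as $\beta$ is chosen small relative to $\alpha$.'' This is backwards: making $\beta$ smaller makes $\phi_\beta$ grow more slowly, so it is \emph{less} able to dominate $e^{\alpha\langle\rho'\rangle}$. Consequently your claimed bound $z_1 \lesssim \alpha^{-1}\log(CM)$ is not correct. What the paper actually does (in the unlabelled lemma preceding the proof) is bound $\sup_{\rho',R'} e^{-\alpha\langle\rho-\rho'\rangle}\phi_\beta(\rho',R')^{-1} \le C' e^{-\delta\langle\rho\rangle}$ with $\delta = \frac{2\alpha\beta\sqrt{3}}{\alpha\sqrt{\gamma}+\beta\sqrt{3}}$, a harmonic-mean-type combination of $\alpha$ and $\beta$ that vanishes as $\beta\to 0$. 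Hence $z_1 \approx \delta^{-1}\log(4MC')$, and similarly for $z_2$; both thresholds blow up as $\beta\to 0$.

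This matters because when you evaluate $\phi_\beta$ at radius $r(\mu) \approx \delta^{-1}\log M$, the factor $\beta$ in the exponent of $\phi_\beta$ is met by $\delta^{-1}\sim 1/\beta$ inside the radius: the two cancel, and one gets $\eta = \sqrt{7/12} + o(\beta)$ (the paper's constant), a quantity bounded away from zero. Your expression $\eta = \beta C_\gamma''$, which tends to $0$ with $\beta$, is therefore wrong. The conclusion that $\eta<1$ for $\beta$ small happens to hold, but it holds because the leading-order numerical constant $\sqrt{7/12}$ is strictly below $1$, not because $\eta$ can be made arbitrarily small. This is a genuine gap: your reasoning would ``prove'' $\eta<1$ for any exponential weight, whereas in fact $\eta<1$ is an arithmetic property of the particular $\phi_\beta$ defined in \eqref{eq:definephi} and must be checked explicitly.
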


We begin with a lemma.
\begin{lem}
Let $\mu \in \mathscr{C}_{M,\beta}$ and $b$ satisfies Assumption \ref{assumptionb}. Then we can give explicit expressions for the constants $z_1$ and $z_2$ appearing in Lemma \ref{lem:rmu} for $M$ large. These are
\begin{align}
z_1 = \log(4MC')/\delta \text{ and } z_2 = \log(4MC')/\delta', 
\end{align}
with constants $\delta$ and $\delta'$ given by
\begin{align*}
    \delta = \frac{2\alpha \beta \sqrt{3}}{\alpha \sqrt{\gamma}+\beta \sqrt{3}}, \text{ and } \delta' = \frac{2\alpha \beta \sqrt{3\gamma}}{\alpha \sqrt{\gamma} + \beta \sqrt{3}}. 
    \end{align*}
    We recall that $\beta$ is a constant which is fixed in the proof of Lemma \ref{lem:lyapunov}.
\end{lem}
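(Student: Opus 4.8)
The plan is to turn the dominated-convergence argument behind Lemma~\ref{lem:rmu} into a quantitative one, tracking how the constants depend on $M$. Fix $\mu\in\mathscr{C}_{M,\beta}$, so that $\intR \phi_\beta\,\mu(\dr',\dR')\le M$. By the oddness of $b$ it is enough to control $a_1[\mu](\rho)$ as $\rho\to+\infty$ (the limit $\rho\to-\infty$ follows after the substitution $\rho'\mapsto-\rho'$), and likewise only $R\to+\infty$ for $a_2[\mu]$. Since $\mu$ is a probability measure we write
\[ a_1[\mu](\rho) = 1 - \intR \bigl(1 - b(\rho-\rho')\bigr)\,\mu(\dr',\dR'), \]
so, $a_1[\mu]$ being monotone by Lemma~\ref{lem:rmu}, it suffices to show that the absolute value of the last integral is $<1/2$ for every $\rho>z_1$, with $z_1$ as in the statement.

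First I would split the integral at a level $\theta z_1$, with $\theta\in(0,1)$ to be optimised, into the regions $A=\{|\rho'|\le\theta z_1\}$ and $B=\{|\rho'|>\theta z_1\}$. On $A$, if $\rho>z_1$ then $\rho-\rho'\ge(1-\theta)z_1>0$, so Assumption~\ref{assumptionb} together with $\langle z\rangle\ge|z|$ gives $|1-b(\rho-\rho')|=|1-b(|\rho-\rho'|)|\le Ce^{-\alpha\langle\rho-\rho'\rangle}\le Ce^{-\alpha(1-\theta)z_1}$; since $\mu(A)\le1$ the $A$-contribution is at most $Ce^{-\alpha(1-\theta)z_1}$. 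On $B$ I would bound $|1-b|$ by a constant $C''$ depending only on $C$ (Assumption~\ref{assumptionb} forces $\sup_z|b(z)|\le 1+C$) and estimate $\mu(B)$ from the moment bound: completing the square gives the elementary inequality $4\rho'^2/\gamma+2\rho'R'+\gamma R'^2\ge 3\rho'^2/\gamma$ (minimum over $R'$), hence $\phi_\beta(\rho',R')\ge\exp\bigl(\beta\sqrt{1+3\rho'^2/\gamma}\bigr)\ge\exp(\beta\sqrt3\,|\rho'|/\sqrt\gamma)$, so that $\mu(B)\le M e^{-\beta\sqrt3\,\theta z_1/\sqrt\gamma}$. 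Altogether the integral is at most $Ce^{-\alpha(1-\theta)z_1}+C'' M e^{-\beta\sqrt3\,\theta z_1/\sqrt\gamma}$.

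It then remains to pick $\theta$ and $z_1$: equalising the two exponents, $\alpha(1-\theta)=\beta\sqrt3\,\theta/\sqrt\gamma$, balances the two competing decay rates, the bound becomes $C'(1+M)e^{-\delta z_1}$ with $\delta$ of the stated form, and this is $<1/2$ as soon as $z_1\ge\delta^{-1}\log(4C'M)$ — for $M$ large precisely the claimed expression once $C'$ is relabelled. The argument for $a_2[\mu]$ and $z_2$ is identical after interchanging the roles of $\rho'$ and $R'$ and using the companion inequality $4\rho'^2/\gamma+2\rho'R'+\gamma R'^2\ge 3\gamma R'^2/4$ (minimum over $\rho'$), so that $\phi_\beta\ge\exp(\beta\sqrt{3\gamma}\,|R'|/2)$; balancing $\alpha(1-\theta)$ against $\beta\sqrt{3\gamma}\,\theta/2$ and relabelling yields $\delta'$.

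The only slightly delicate points are purely computational: verifying the two quadratic-form inequalities for $\phi_\beta$ (where the $\gamma$-dependence of $\delta$ and $\delta'$ enters) and the bookkeeping of constants in the balancing step. Crucially, $\mu$ enters every estimate only through the single number $M$, so the resulting $z_1,z_2$ are uniform over $\mathscr{C}_{M,\beta}$ — exactly the uniformity needed in Step~3 of the outline to close the self-improving moment bound $\intR G(\mu)\phi_\beta\,\dr\dR\le CM^\eta$. I do not expect a genuine obstacle here; this lemma is essentially a bookkeeping step that upgrades Lemma~\ref{lem:rmu} to an explicit, $M$-dependent statement.
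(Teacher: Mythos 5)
Your split-domain argument is a legitimate alternative to the paper's method: the paper instead inserts $\phi_\beta\phi_\beta^{-1}$ inside the integral, uses the moment bound to pull out the factor $M$, and then bounds $\sup_{\rho'}\bigl(e^{-\alpha\langle\rho-\rho'\rangle}e^{-\beta\sqrt{1+3\rho'^2/\gamma}}\bigr)$ by optimising the exponent over $\rho'$. Both routes reduce to controlling the same competition between the tail of $1-b$ and the tail of $\phi_\beta^{-1}$, and both use the same two quadratic-form minimisations $\min_{R'}(4\rho'^2/\gamma+2\rho'R'+\gamma R'^2)=3\rho'^2/\gamma$ and $\min_{\rho'}(\cdot)=3\gamma R'^2/4$, so the uniformity in $\mu$ comes out the same way.

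There is, however, an arithmetic slip in your balancing step that you assert but do not check. Equalising $\alpha(1-\theta)=\beta\sqrt{3}\,\theta/\sqrt{\gamma}$ gives $\theta=\alpha\sqrt{\gamma}/(\alpha\sqrt{\gamma}+\beta\sqrt{3})$, so the common exponent is
\[
\alpha(1-\theta)=\frac{\alpha\beta\sqrt{3}}{\alpha\sqrt{\gamma}+\beta\sqrt{3}}=\frac{\delta}{2},
\]
not $\delta$. The same factor of $2$ is lost for $\delta'$. This cannot be absorbed into the multiplicative constant $C'$: it changes the scale $z_1=\log(4MC')/\delta$ by a factor of $2$, and that factor is not harmless — it propagates directly into the computation of $\eta$ in Proposition~\ref{prop:preservedset}, where the condition $\eta<1$ is the whole point and is sensitive to exactly these constants. (For what it is worth, your value $\delta/2=ab/(a+b)$ with $a=\alpha$, $b=\beta\sqrt{3}/\sqrt{\gamma}$ is a genuine lower bound on the decay rate, whereas the sharp rate produced by the paper's pointwise optimisation is $\min(a,b)$; the stated $\delta=2ab/(a+b)$ is the harmonic mean, which exceeds $\min(a,b)$ unless $\alpha\sqrt{\gamma}=\beta\sqrt{3}$, so the stated constant also deserves a second look. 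But that is an issue with the source; your write-up should at least report the constant your own computation produces.)
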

\begin{proof}
We begin with
\begin{align*}
    |1-a_1[\mu](\rho)| & = |\intR (1-b(\rho - \rho')) \mu(\rho',R')\, \mathrm{d}\rho'\mathrm{d}R'| \\
    & \leq \intR |1 - b(\rho - \rho')| \mu(\rho', R') \, \mathrm{d}\rho' \mathrm{d}R' \\
    & = \intR |1- b(\rho - \rho')| \frac{1}{\phi(\rho',R')} \phi(\rho',R') \mu(\rho',R') \, \mathrm{d}\rho' \mathrm{d}R' \\
    & \leq \sup_{\rho',R'} \left(|1-b(\rho-\rho')|\frac{1}{\phi(\rho',R')} \right) \intR \phi(\rho',R') \phi(\rho', R')\, \mathrm{d}\rho' \mathrm{d}R' \\
    & \leq C M  \sup_{\rho', R'} \left(e^{-\alpha\langle \rho -\rho'\rangle}\frac{1}{\phi(\rho',R')} \right)\\
    & \leq CM \sup_{\rho'} \left(e^{-\alpha \langle \rho - \rho' \rangle } \exp \left(- \beta \sqrt{1+ \frac{3\rho'^2}{\gamma}} \right) \right) =: M F(\rho).
\end{align*}
Furthermore,
\[ F(\rho) \leq C' \exp \left( - \delta \langle \rho \rangle \right), \] with 
\[ \delta = \frac{ \sqrt{12} \alpha \beta}{\alpha \sqrt{\gamma} + \beta \sqrt{3}}. \] This means that if $\langle \rho \rangle > \log(4MC')/ \delta$, then $|a_1[\mu](\rho)|> 1/2.$\\
In a similar way, we have
\begin{align*}
    \lvert 1-a_2[\mu](R)\rvert & \leq CM \sup_{\rho', R'}\left( e^{-\alpha \langle R-R'\rangle} \frac{1}{\phi(\rho',R')}\right)\\
    & \leq CM \sup_{R'} \left( \exp \left( - \alpha \langle \rho -\rho'\rangle - \beta \sqrt{ 1+ 3\gamma R^2/4} \right) \right) =: M\tilde{F}(R).
\end{align*}
Again, 
\[ \tilde{F}(R) \leq C' \exp \left( -\delta' \langle R \rangle \right), \] with
\[ \delta' = \frac{4\alpha \beta \sqrt{3\gamma}}{2\alpha + \beta \sqrt{ 3\gamma}}. \] Therefore,
\[ |a_2[\mu](R)|>1/2, \] when \[ \langle R \rangle \geq \frac{1}{\delta'} \log \left( 2C' M \right). \]
When $M$ is large, we can set
\[ z_1 = \log(4MC')/\delta \text{ and } z_2 = \log(4MC')/\delta' .\] 
\end{proof} 
We now move on to the proof of proposition \ref{prop:preservedset}.
\begin{proof}[Proof of Proposition \ref{prop:preservedset}]
We begin by recalling part of the result of Lemma \ref{lem:Gbound} that
\[ \intR G(\mu) \phi_{\beta} \, \dr \dR \leq \frac{2}{\Lambda}\left(\sqrt{\gamma/3}c + 2\sigma^2 \frac{(1+\beta)}{\gamma} + \frac{\Lambda}{2} \right) \sup_{|(\rho', R')|\leq r(\mu)}\phi_{\beta}(\rho', R'). \]
When $M$ is sufficiently large, $z_1,z_2$ limit the choice of $r(\mu)$. Let us work in the case $2/\gamma \leq 1$. We therefore are interested in
\begin{align*} &\sup_{|\rho| < z_1, |R| < \max\{1, 2/\gamma \} z_2} \phi_{\beta}(\rho, R) \leq \exp \left( \beta \sqrt{1 + \frac{4}{\gamma}z^2_1 + 2z_1 z_2 + \gamma z_2^2} \right)\\
& = \exp \left( \sqrt{\beta^2 + \log\left(\frac{2C'M}{c} \right)^2 \left( \frac{4 (\alpha \sqrt{\gamma} + \beta \sqrt{3})^2}{12 \gamma \alpha^2 } + 2 \frac{(\alpha \sqrt{\gamma} + \beta \sqrt{3})(2\alpha +\beta \sqrt{3\gamma})}{24 \alpha^2 \sqrt{\gamma}} + \gamma \frac{(2\alpha + \beta \sqrt{3 \gamma})^2}{48\alpha^2 \gamma}\right)}\right)\\
& = \exp \left(\sqrt{\log\left(\frac{2C'M}{c} \right)^2 \frac{7}{12} + o(\beta)} \right) \\
& \leq CM^\eta,
\end{align*}
where $\eta = \sqrt{7/12} + o(\beta)$. We then choose $\beta$ so that $\eta <1.$ Therefore, we have for some constant $C$,
\[ \intR \mu \phi_{\beta} \, \dr \dR \leq M \Rightarrow \intR G(\mu) \phi_{\beta} \, \dr \dR \leq CM^\eta. \] Therefore, if $M$ is large enough we will map the set $\mathscr{C}_{M,\beta}$ onto itself.
\end{proof}

\subsubsection{Continuity of the function $G$}
In this section we prove the following proposition.
\begin{prop} \label{prop:gcontinuous}
The function $G$ from $\mathcal{P}_\beta$ to $\mathcal{P}_\beta$ is continuous with respect to the topology induced by the norm $\|\cdot\|_\beta,$ and in the topology of weak convergence of measures.
\end{prop}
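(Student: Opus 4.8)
The plan is to deduce continuity of $G$ from continuity of the finite‑time semigroup $\mathcal{S}_t^\mu$, using that $G(\mu)$ is reached from any datum at a rate that is exponential and uniform in $\mu$. Fix $\mu_1$ and recall from Proposition~\ref{prop:preservedset} that (for $M$ large, $\beta$ small) the set $\mathscr{C}_{M,\beta}$ is $G$‑invariant, so it suffices to prove continuity on $\mathscr{C}_{M,\beta}$, where $G(\mu_1),G(\mu_2)\in\mathscr{C}_{M,\beta}$ and hence $\|G(\mu_1)-G(\mu_2)\|_\beta\le 2M$. Setting $\nu:=G(\mu_1)$ and using $\mathcal{S}_t^{\mu_2}G(\mu_2)=G(\mu_2)$ together with the contraction in Lemma~\ref{lem:Gbound},
\[ \|G(\mu_1)-G(\mu_2)\|_\beta \le \big\| \mathcal{S}_t^{\mu_1}\nu-\mathcal{S}_t^{\mu_2}\nu\big\|_\beta + \big\| \mathcal{S}_t^{\mu_2}(\nu-G(\mu_2))\big\|_\beta \le \big\| \mathcal{S}_t^{\mu_1}\nu-\mathcal{S}_t^{\mu_2}\nu\big\|_\beta + 2MCe^{-\lambda t}. \]
Given $\varepsilon>0$ we first fix $t$ with $2MCe^{-\lambda t}<\varepsilon/2$, and are left to show $\|\mathcal{S}_t^{\mu_1}\nu-\mathcal{S}_t^{\mu_2}\nu\|_\beta\to 0$ as $\mu_2\to\mu_1$ in either topology. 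For weak convergence on $\mathscr{C}_{M,\beta}$, the uniform bound $\intR \phi_\beta\,\mu_2\,\dr\dR\le M$ and the exponential growth of $\phi_\beta$ give uniform integrability of first moments, so $\mu_2\to\mu_1$ weakly implies $W_1(\mu_2,\mu_1)\to0$; convergence in $\|\cdot\|_\beta$ implies $W_1(\mu_2,\mu_1)\to0$ as well. Hence in both cases it is enough to prove $\|\mathcal{S}_t^{\mu_1}\nu-\mathcal{S}_t^{\mu_2}\nu\|_\beta\to0$ whenever $W_1(\mu_2,\mu_1)\to0$.

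Second, continuity of the coefficients and of the semigroup in Wasserstein distance. Since $b$ is smooth with bounded derivatives (Assumption~\ref{assumptionb}), the maps $(\rho',R')\mapsto b^{(k)}(\rho-\rho')$ are Lipschitz uniformly in $\rho$, so by Kantorovich duality $\|a_1[\mu_1]-a_1[\mu_2]\|_\infty+\|a_2[\mu_1]-a_2[\mu_2]\|_\infty\le C\,W_1(\mu_1,\mu_2)$, and similarly for all $\rho$‑derivatives of $a_1[\mu]$. To compare $\mathcal{S}_t^{\mu_1}$ and $\mathcal{S}_t^{\mu_2}$ I use the SDE representation \eqref{eq:SDEmain}: solve the two systems, with coefficients $a_1[\mu_i],a[\mu_i]$, driven by the same Brownian motion and started from the same random state of law $\nu$. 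The noise cancels in the difference, and a two‑step Grönwall estimate — first bounding $|\rho^1_t-\rho^2_t|$ via the $\|b'\|_\infty$‑Lipschitz bound and $\|a_1[\mu_1]-a_1[\mu_2]\|_\infty$, then $|R^1_t-R^2_t|$ using the result — yields a pathwise estimate
\[ |\rho^1_t-\rho^2_t|+|R^1_t-R^2_t| \le C(t)\big(\|a_1[\mu_1]-a_1[\mu_2]\|_\infty+\|a_2[\mu_1]-a_2[\mu_2]\|_\infty\big)\le C'(t)\,W_1(\mu_1,\mu_2), \]
so, taking expectations, $W_1\big(\mathcal{S}_t^{\mu_1}\nu,\mathcal{S}_t^{\mu_2}\nu\big)\le C'(t)W_1(\mu_1,\mu_2)\to0$.

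Third, upgrade from $W_1$ to $\|\cdot\|_\beta$. For $t>0$ equation \eqref{eq:linearmain} is hypoelliptic (the Hörmander condition is verified in Lemma~\ref{lem:doeblin}), and $\partial_\rho a_1[\mu](\rho)=\intR b'(\rho-\rho')\mu\,\dr'\dR'>0$ is bounded below on any fixed compact $\rho$‑set uniformly over $\mathscr{C}_{M,\beta}$, because such $\mu$ put mass at least $1/2$ on a fixed compact set by the $\phi_\beta$‑moment bound. Standard hypoelliptic smoothing then gives, for fixed $t>0$, a bound on $\mathcal{S}_t^\mu\nu$ in $W^{1,1}_{\mathrm{loc}}$ (equivalently $BV_{\mathrm{loc}}$) that is uniform over $\mu$ in a neighbourhood of $\mu_1$, with constants depending only on $t$, $\nu$, $\sup_k\|b^{(k)}\|_\infty$ and the compact lower bound above. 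Moreover, running the Foster–Lyapunov computation of Lemma~\ref{lem:lyapunov} with a parameter $\beta'>\beta$ still small enough for that lemma, and propagating in time, gives $\intR \phi_{\beta'}\mathcal{S}_t^\mu\nu\,\dr\dR\le e^{-\lambda't}\intR\phi_{\beta'}\nu\,\dr\dR+A'/\lambda'=:K'$ uniformly over $\mu\in\mathscr{C}_{M,\beta}$, where $\intR\phi_{\beta'}\nu\,\dr\dR<\infty$ since $\nu=G(\mu_1)$ and Harris's theorem applies with Lyapunov function $\phi_{\beta'}$ too. Now write $g_i=\mathcal{S}_t^{\mu_i}\nu$ and split $\mathbb{R}^2=B_L\cup B_L^c$: on the tail, $\int_{B_L^c}\phi_\beta|g_1-g_2|\,\dr\dR\le \big(\sup_{B_L^c}\phi_\beta/\phi_{\beta'}\big)2K'\to0$ as $L\to\infty$, uniformly in $\mu_1,\mu_2$; on the bulk, $\int_{B_L}\phi_\beta|g_1-g_2|\,\dr\dR\le \big(\sup_{B_L}\phi_\beta\big)\|g_1-g_2\|_{L^1(B_L)}$, and the uniform $BV(B_L)$ bound together with $W_1(g_1,g_2)\to0$ forces $\|g_1-g_2\|_{L^1(B_L)}\to0$ for each fixed $L$ (by the compact embedding $BV(B_L)\hookrightarrow L^1(B_L)$ and uniqueness of the $W_1$‑limit; alternatively by interpolation between a negative Sobolev norm controlled by $W_1$ and the $BV$ bound). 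Choosing $L$ large, then $\mu_2$ close to $\mu_1$, gives $\|g_1-g_2\|_\beta<\varepsilon/2$, hence $\|G(\mu_1)-G(\mu_2)\|_\beta<\varepsilon$. This proves continuity in $\|\cdot\|_\beta$; since $\|\cdot\|_\beta$‑convergence implies weak convergence and we have shown weak convergence $\mu_2\to\mu_1$ on $\mathscr{C}_{M,\beta}$ implies $\|G(\mu_2)-G(\mu_1)\|_\beta\to0$, continuity in the weak topology follows as well.

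The main obstacle is the third step, specifically the uniform‑in‑$\mu$ hypoelliptic regularisation estimate: one must verify that the smoothing constants for $\mathcal{S}_t^\mu$ can be taken independent of $\mu\in\mathscr{C}_{M,\beta}$, even though the commutator $[\partial_\rho,\,a[\mu]\partial_R]=(\partial_\rho a_1[\mu])\partial_R$ — the mechanism producing smoothing in $R$ — degenerates as $|\rho|\to\infty$. As with the use of Harris's theorem, this is rescued by the fact that only compact sets matter, where $\partial_\rho a_1[\mu]$ is uniformly bounded below over $\mathscr{C}_{M,\beta}$; making this quantitative, and in particular controlling the weighted Sobolev norm up to the weight $\phi_{\beta'}$ rather than merely locally, is the technical heart of the proof.
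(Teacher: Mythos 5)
Your overall architecture is the same as the paper's: (i) reduce continuity of $G$ to continuity of $\mathcal{S}_t^\mu$ in $\mu$ via the triangle inequality $\|G(\mu_1)-G(\mu_2)\|_\beta\le\|\mathcal{S}_t^{\mu_1}\nu-\mathcal{S}_t^{\mu_2}\nu\|_\beta+De^{-\lambda t}\|G(\mu_1)-G(\mu_2)\|_\beta$ with $\nu$ a fixed point of one of the semigroups; (ii) a synchronous SDE coupling and Gr\"onwall to obtain $W_1(\mathcal{S}_t^{\mu_1}\nu,\mathcal{S}_t^{\mu_2}\nu)\le Ce^{Ct}(\|a_1[\mu_1]-a_1[\mu_2]\|_\infty+\|a_2[\mu_1]-a_2[\mu_2]\|_\infty)$, with these sup-norms controlled by total variation (and hence by $\|\cdot\|_\beta$) or by $W_1$; and (iii) an interpolation that trades a weighted-$L^1$ bound on a large ball for a Wasserstein bound plus a gradient-regularity bound, with tails controlled by a slightly stronger moment $\phi_{\beta(1+a)}$.

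Where you diverge, and where the real content lies, is in step (iii). The paper does not invoke a ready-made hypoelliptic regularity theorem. It proves, by explicit weighted $H^1$-type computations, a hypocoercivity-style functional
\[
\mathcal{F}(t,f)=\intR m\left(f^2+A_1t^2(\partial_\rho f)^2+A_2t^4(\partial_\rho a_1)(\partial_\rho f)(\partial_R f)+A_3t^6(\partial_\rho a_1)^2(\partial_R f)^2\right)\dr\dR
\]
is nonincreasing for small $t$, and it is precisely the extra weight $\partial_\rho a_1[\mu]$ multiplying $\partial_R f$ that makes the estimate survive the degeneracy you correctly identify (the commutator $[\partial_\rho,a[\mu]\partial_R]=(\partial_\rho a_1[\mu])\partial_R$ decays as $|\rho|\to\infty$). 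The paper then interpolates $\int_{|z|\le Z}|f-g|\lesssim W_1(f,g)^{1/2}(\int_{|z|\le Z}|\nabla f|^2+|\nabla g|^2)^{1/4}$ by mollification and duality, which is a quantitative inequality and not just a compactness statement. Your route via a uniform $BV_{\mathrm{loc}}$ bound and the embedding $BV(B_L)\hookrightarrow L^1(B_L)$ gives convergence but not a rate, and, more importantly, you do not produce the $BV$ bound: appealing to ``standard hypoelliptic smoothing'' is exactly what is unavailable here, because the H\"ormander bracket degenerates at infinity and the initial datum is only a measure. You are right to call this the technical heart, but flagging it does not close it; the paper's Lemma on $\mathcal{F}(t,f)$, with the $(\partial_\rho a_1)$-weighted $\partial_R$ term and the time weights $t^2,t^4,t^6$, is the missing ingredient.

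One further point worth noting: the paper's final estimate carries a factor $\|G(\mu_2)m\|_2^{a/2(1+a)}$ and the authors explicitly say they cannot get a uniform bound on this over $\mathscr{C}_{M,\beta}$. You propose to make the smoothing uniform in a neighbourhood of $\mu_1$; that would actually strengthen the argument, but it is precisely the part you have not proved, and it is not obviously obtainable by the ``compact lower bound on $\partial_\rho a_1$'' observation alone, since the regularisation constant also depends on how fast $\partial_\rho a_1$ decays at infinity, which is comparable but not uniform across $\mathscr{C}_{M,\beta}$ without the quantitative decay built into the proof of Proposition \ref{prop:preservedset}.

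Finally, your reduction ``$\mu_2\to\mu_1$ weakly on $\mathscr{C}_{M,\beta}$ implies $W_1\to0$'' is fine given the uniform exponential moment, and the final passage from strong continuity to weak continuity is the same as in the paper (weak convergence gives pointwise convergence of $a_i[\mu_n]$, hence $\|\cdot\|_\beta$-convergence of $G(\mu_n)$, hence weak convergence). So the gap is localised to the regularisation lemma; the rest of your plan is sound and matches the paper.
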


 First let us describe the main intuition. We start by showing that we can turn the question of continuity of $G$ into a question of continuity of the semigroup $S_t^\mu$ with respect to $\mu$. We will see below that it is straightforward to show that if $\mu_1$ and $\mu_2$ are close in total variation then we can show that $S_t^{\mu_1} \nu$ and $S_t^{\mu_2} \nu$ are close in Wasserstein-1 distance by a stability estimate on the SDEs. As $S_t^\mu$ is a regularising semi-group for every $\mu$, and depends continuously on $\mu$, we expect Wasserstein closeness of $S_t^{\mu_1},S_t^{\mu_2}$ to imply closeness in the $\|\cdot\|_*$ norm. 
We would like to prove an inequality like
\[ W_1(\nu_1, \nu_2) \leq \|\nu_1-\nu_2\|_\beta^a H(\nu_1, \nu_2), \] where $H(\nu_1, \nu_2)$ is a quantity that depends on some norm of $\nabla \nu_1, \nabla \nu_2$. We were not able to prove such an inequality, wich is why we use moments to control the tail behaviour.\\

We start by stating a sequence of lemmas to control and relate the different distances to each other.
\begin{lem}
There exists a function $C(t)$ which is finite for $t$ sufficiently large so that
\[ \| G(\mu_1) - G(\mu_2)\|_\beta \leq C(t) \| S_t^{\mu_1} - S_t^{\mu_2}\|_\beta'. \]
\end{lem}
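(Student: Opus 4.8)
The idea is to show that $G$ is a fixed point of a contraction-like relation, and hence converge the error between $G(\mu_1)$ and $G(\mu_2)$ via the exponential decay of the semigroup. Since $G(\mu_i)$ is the steady state of $\mathcal S_t^{\mu_i}$, we have $S_t^{\mu_i}G(\mu_i) = G(\mu_i)$ for all $t$. Hence we may write, for any fixed $t$,
\begin{align*}
G(\mu_1) - G(\mu_2) &= S_t^{\mu_1}G(\mu_1) - S_t^{\mu_2}G(\mu_2)\\
&= S_t^{\mu_1}\bigl(G(\mu_1) - G(\mu_2)\bigr) + \bigl(S_t^{\mu_1} - S_t^{\mu_2}\bigr)G(\mu_2).
\end{align*}
Taking $\|\cdot\|_\beta$ and using the decay estimate from Lemma \ref{lem:Gbound}, namely $\|S_t^{\mu_1}(f_1-f_2)\|_\beta \le Ce^{-\lambda t}\|f_1-f_2\|_\beta$, applied to $f_1 - f_2 = G(\mu_1) - G(\mu_2)$ (which has zero total mass, so the contraction part of Harris's theorem applies), we obtain
\[ \|G(\mu_1) - G(\mu_2)\|_\beta \le Ce^{-\lambda t}\|G(\mu_1)-G(\mu_2)\|_\beta + \bigl\|\bigl(S_t^{\mu_1}-S_t^{\mu_2}\bigr)G(\mu_2)\bigr\|_\beta. \]

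For $t$ sufficiently large, $Ce^{-\lambda t} < 1$, so the first term can be absorbed into the left-hand side, giving
\[ \|G(\mu_1)-G(\mu_2)\|_\beta \le \frac{1}{1-Ce^{-\lambda t}}\,\bigl\|\bigl(S_t^{\mu_1}-S_t^{\mu_2}\bigr)G(\mu_2)\bigr\|_\beta. \]
Here one must be slightly careful: a priori $\|G(\mu_1)-G(\mu_2)\|_\beta$ could be infinite, so the absorption step should be justified by the fact that both $G(\mu_i)$ lie in $\mathcal P_\beta$ (indeed in the preserved set $\mathscr C_{M,\beta}$ by Proposition \ref{prop:preservedset}), hence $\|G(\mu_1)-G(\mu_2)\|_\beta \le 2M < \infty$, which makes the rearrangement legitimate. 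The notation $\|S_t^{\mu_1}-S_t^{\mu_2}\|_\beta'$ in the statement should be read as the operator-type quantity $\bigl\|(S_t^{\mu_1}-S_t^{\mu_2})G(\mu_2)\bigr\|_\beta$ (or a supremum of such over the relevant set); in any case the constant is $C(t) = \bigl(1-Ce^{-\lambda t}\bigr)^{-1}$, which is finite precisely when $t$ is large enough that $Ce^{-\lambda t}<1$.

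The only genuine subtlety — and the step I would flag as needing the most care rather than the hardest in principle — is ensuring that the decay constants $C,\lambda$ from Harris's theorem are uniform in $\mu$, so that the resulting $C(t)$ does not itself depend on $\mu_1,\mu_2$. This is exactly what the proof of Lemma \ref{lem:Gbound} provides: the Lyapunov and minorisation constants extracted in Lemmas \ref{lem:doeblin} and \ref{lem:lyapunov} depend on $\mu$ only through $z_1,z_2$, which are in turn uniformly controlled on $\mathscr C_{M,\beta}$. Everything else is the elementary algebra of absorbing a contraction term, so the lemma follows once that uniformity is invoked; the real work is deferred to the subsequent lemmas that actually bound $\bigl\|(S_t^{\mu_1}-S_t^{\mu_2})G(\mu_2)\bigr\|_\beta$ in terms of a distance between $\mu_1$ and $\mu_2$.
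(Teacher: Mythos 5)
Your decomposition $G(\mu_1)-G(\mu_2) = S_t^{\mu_1}\bigl(G(\mu_1)-G(\mu_2)\bigr) + \bigl(S_t^{\mu_1}-S_t^{\mu_2}\bigr)G(\mu_2)$, followed by the Harris contraction estimate and absorbing the first term for $t$ large, is exactly the paper's argument. One small caveat: the paper explicitly allows the Harris constants $D,\lambda$ to depend on $\mu_1$ (the lemma's $C(t)$ is not asserted to be uniform in $\mu$), whereas you claim uniformity over $\mathscr{C}_{M,\beta}$; that uniformity is not what Lemma \ref{lem:Gbound} as stated provides, but it is also not needed for the present lemma, so your proof remains correct.
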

\begin{proof}
We use the fact that for any $t$ we have $S_t^\mu G(\mu) = G(\mu).$ We also have from the Harris's theorem result that there exists $\lambda, D$ which depend on $\mu_1$ such that
\[ \| S_t^{\mu_1}(f-g)\|_\beta \leq De^{-\lambda t} \|f-g\|_\beta.  \] Using these two facts we have
\begin{align*}
    \|G(\mu_1) - G(\mu_2)\|_\beta & = \|S_t^{\mu_1}G(\mu_1) - S_t^{\mu_2}G(\mu_2) \|_\beta\\
    & \leq \|S_t^{\mu_1}(G(\mu_1) - G(\mu_2))\|_\beta + \|(S_t^{\mu_1} - S_t^{\mu_2})G(\mu_2) \|_\beta \\
   &  \leq De^{-\lambda t} \|G(\mu_1) - G(\mu_2)\|_\beta + \|(S_t^{\mu_1} - S_t^{\mu_2})G(\mu_2)\|_\beta.
\end{align*}
Rearranging this we have
\[ (1-De^{-\lambda t}) \|G(\mu_1) - G(\mu_2) \|_\beta \leq \|(S_t^{\mu_1} - S_t^{\mu_2})G(\mu_2) \|_\beta.  \]
\end{proof}
Next we show that in Wasserstein distances, we can control the difference between $S_t^{\mu_1}$ and $S_t^{\mu_2}$.
\begin{lem}
There exists a constant $C>0$ such that for any $\nu$ we have
\[ W_1(S_t^{\mu_1} \nu, S_t^{\mu_2} \nu) \leq C e^{Ct} \left( \| a_1[\mu_1] - a_1[\mu_2]\|_\infty + \| a_2[\mu_1] - a_2[\mu_2] \|_\infty \right) \]
\end{lem}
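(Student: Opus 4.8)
The plan is to use the synchronous coupling of the SDEs \eqref{eq:SDEmain} driven by $\mu_1$ and $\mu_2$ respectively with the same Brownian motion $B_t$ and the same initial law $\nu$, and then control the distance between the two resulting processes by a Gronwall argument. Concretely, let $(\rho_t^{(i)}, R_t^{(i)})$ solve \eqref{eq:SDEmain} with drift coefficients $a_1[\mu_i], a_2[\mu_i]$, with $(\rho_0^{(1)},R_0^{(1)}) = (\rho_0^{(2)},R_0^{(2)})$ distributed according to $\nu$. Since both processes are driven by the same $B_t$, the noise terms cancel in the difference, so $\rho_t^{(1)} - \rho_t^{(2)}$ and $R_t^{(1)} - R_t^{(2)}$ satisfy an ordinary (random) differential system with no stochastic term.

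The key step is to estimate $\frac{\mathrm{d}}{\mathrm{d}t}\left( |\rho_t^{(1)} - \rho_t^{(2)}| + |R_t^{(1)} - R_t^{(2)}| \right)$. For the $\rho$-difference one writes $a_1[\mu_1](\rho_t^{(1)}) - a_1[\mu_2](\rho_t^{(2)}) = \bigl(a_1[\mu_1](\rho_t^{(1)}) - a_1[\mu_1](\rho_t^{(2)})\bigr) + \bigl(a_1[\mu_1](\rho_t^{(2)}) - a_1[\mu_2](\rho_t^{(2)})\bigr)$; the first bracket is bounded by $\|a_1[\mu_1]\|_{\mathrm{Lip}} |\rho_t^{(1)} - \rho_t^{(2)}|$ (and $a_1[\mu]$ is globally Lipschitz with constant independent of $\mu$, since $b$ is smooth with bounded derivative, which follows from Assumption \ref{assumptionb}), and the second bracket is bounded by $\|a_1[\mu_1] - a_1[\mu_2]\|_\infty$. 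One does the same for the $R$-equation, splitting $a[\mu_1](\rho_t^{(1)},R_t^{(1)}) - a[\mu_2](\rho_t^{(2)},R_t^{(2)}) = a_1[\mu_1](\rho_t^{(1)}) - a_1[\mu_2](\rho_t^{(2)}) - \bigl(a_2[\mu_1](R_t^{(1)}) - a_2[\mu_2](R_t^{(2)})\bigr)$ and treating each piece by a Lipschitz-plus-sup-norm decomposition as above. Collecting terms, the quantity $D_t := |\rho_t^{(1)} - \rho_t^{(2)}| + |R_t^{(1)} - R_t^{(2)}|$ satisfies $\frac{\mathrm{d}}{\mathrm{d}t} D_t \leq C D_t + C\bigl( \|a_1[\mu_1] - a_1[\mu_2]\|_\infty + \|a_2[\mu_1] - a_2[\mu_2]\|_\infty \bigr)$ with $C$ depending only on the Lipschitz constants of $b$ and on $\gamma$. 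Since $D_0 = 0$, Gronwall's inequality gives $\mathbb{E}[D_t] \leq C e^{Ct}\bigl( \|a_1[\mu_1] - a_1[\mu_2]\|_\infty + \|a_2[\mu_1] - a_2[\mu_2]\|_\infty \bigr)$. Finally, $(\rho_t^{(1)},R_t^{(1)})$ and $(\rho_t^{(2)},R_t^{(2)})$ furnish a coupling of $S_t^{\mu_1}\nu$ and $S_t^{\mu_2}\nu$, so by the definition of the Wasserstein-$1$ distance (with the $\ell^1$ cost on $\mathbb{R}^2$) we obtain $W_1(S_t^{\mu_1}\nu, S_t^{\mu_2}\nu) \leq \mathbb{E}[D_t]$, which is the claimed bound.

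The only mild obstacle is to make sure the constant $C$ in the Gronwall step is genuinely independent of $\mu_1, \mu_2$ and of $\nu$. Independence of $\nu$ is automatic because the estimate on $D_t$ is pathwise. Independence of $\mu_i$ follows from the uniform Lipschitz bound $\|a_j[\mu]\|_{\mathrm{Lip}} \leq \|b'\|_\infty$ for any probability measure $\mu$, which holds since $a_j[\mu]$ is $b$ convolved (in the relevant variable) against a probability measure. No exponential-weight or moment information on $\mu_i$ is needed here; those enter only later when one upgrades this Wasserstein control to control in the $\|\cdot\|_\beta$ norm via interpolation with the moment bounds and the regularising property of $S_t^\mu$.
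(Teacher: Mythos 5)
Your proposal is correct and follows essentially the same route as the paper: synchronous coupling of the two SDEs with the same Brownian motion and initial law, pathwise Lipschitz-plus-sup-norm decomposition of the drift difference, Gronwall, and then bounding $W_1$ by the expected $\ell^1$ distance of the coupled pair. The paper's proof is slightly terser about the decomposition and the uniformity of the Lipschitz constant, but the underlying argument is identical.
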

\begin{proof}
We show this by construction of an explicit coupling of $S_t^{\mu_1} \nu$ and $S_t^{\mu_2} \nu$. Let $(R_0, \rho_0)$ be distributed with law $\nu$ and construct two stochastic processes $(R^{(1)}_t, \rho^{(1)}_t)$ and $(R^{(2)}_t, \rho^{(2)}_t)$ as solutions to the SDEs
\begin{align*}
    \mathrm{d}R^{(i)}_t &= a[\mu_i](R^{(i)}_t, \rho^{(i)}_t) \,\mathrm{d}t,\\
    \mathrm{d}\rho^{(i)}_t &= - \gamma a_1[\mu_i](\rho^{(i)}_t)\, \mathrm{d}t + \sigma \,\mathrm{d}W_t,
\end{align*}
where both SDEs have the same initial data $(R_0, \rho_0)$ and the same Brownian motion $(W_t)_{t \geq 0}.$ Then the law of $((R^{(1)}_t, \rho^{(1)}_t), (R^{(2)}_t, \rho^{(2)}_t))$ defines a coupling of $S_t^{\mu_1} \nu$ and $S_t^{\mu_2} \nu$. We can also compute 
\begin{align*}
    \mathrm{d} \left( |R^{(1)}_t - R^{(2)}_t| + |\rho^{(1)}_t - \rho^{(2)}_t| \right) & \leq \sgn(R^{(1)}_t - R^{(2)}_t)\left( a[\mu_1](R^{(1)}_t, \rho^{(1)}_t) - a[\mu_2](R^{(2)}_t, \rho^{(2)}_t)\right) \mathrm{d}t\\ & \quad - \gamma  \sgn(\rho^{(1)}_t - \rho^{(2)}_t)\left( a_1[\mu_1](R^{(1)}_t, \rho^{(1)}_t) - a_1[\mu_2](R^{(2)}_t, \rho^{(2)}_t)\right)\mathrm{d}t\\
    & \leq C \left(\|a'[\mu_1]\|_\infty + \|a_1'[\mu_1]\|_\infty \right)\left( |R^{(1)}_t - R^{(2)}_t| + |\rho^{(1)}_t - \rho^{(2)}_t| \right)\mathrm{d}t \\
    & \quad + C \left(\| a_1[\mu_1] - a_1[\mu_2]\|_\infty + \| a_2[\mu_1] - a_2[\mu_2] \|_\infty \right)\mathrm{d}t.
\end{align*}
Integrating this and using the fact that the initial conditions are the same we get
\[ \left( |R^{(1)}_t - R^{(2)}_t| + |\rho^{(1)}_t - \rho^{(2)}_t| \right) \leq C e^{Ct}\left(\| a_1[\mu_1] - a_1[\mu_2]\|_\infty + \| a_2[\mu_1] - a_2[\mu_2] \|_\infty \right). \] Taking expectations then gives our result.
\end{proof}
We now need to prove a sequence of lemmas relating our distances to each other. 
\begin{lem} \label{lem:interpolation}
For any $Z$, we have that
\[ \| f - g\|_* \leq \left(\int (f+g)\phi^{1+a} \right)\sup_{|(\rho, R)|> Z} \phi(\rho, R)^{-a} + \left(\int_{|(\rho, R)|<Z} |f-g|\right)\sup_{|(\rho, R)|<Z}\phi(\rho,R).   \]
\end{lem}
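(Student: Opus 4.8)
The plan is to split the integral defining $\|f-g\|_*$ into a region near the origin and a region far from the origin, using the cutoff radius $Z$, and to bound each piece separately. Recall that $\|f-g\|_* = \intR \phi(\rho,R)|f-g|(\mathrm{d}\rho,\mathrm{d}R)$. First I would write
\[
\|f-g\|_* = \int_{|(\rho,R)|<Z} \phi\,|f-g| + \int_{|(\rho,R)|\ge Z} \phi\,|f-g|.
\]
The first term is immediately controlled by $\left(\sup_{|(\rho,R)|<Z}\phi(\rho,R)\right)\int_{|(\rho,R)|<Z}|f-g|$, which is the second term on the right-hand side of the claimed inequality. For the far-field term, the idea is to trade one power of $\phi$ against the tail decay of $\phi^{-a}$: since $|f-g| \le f+g$ pointwise (as $f,g\ge 0$; here $f,g$ are probability densities so this is legitimate), on the region $|(\rho,R)|\ge Z$ we have
\[
\phi\,|f-g| \le \phi\,(f+g) = \phi^{-a}\,\phi^{1+a}(f+g) \le \Big(\sup_{|(\rho,R)|\ge Z}\phi^{-a}\Big)\,\phi^{1+a}(f+g).
\]
Integrating over the far region and then enlarging the domain of integration to all of $\mathbb{R}^2$ (valid since the integrand is nonnegative) gives the bound $\left(\sup_{|(\rho,R)|\ge Z}\phi(\rho,R)^{-a}\right)\int (f+g)\phi^{1+a}$, which is the first term on the right-hand side. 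Adding the two estimates yields the lemma.

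There is essentially no obstacle here: the only points requiring a word of care are that $f,g$ should be taken nonnegative (densities of probability measures) so that $|f-g|\le f+g$ holds, and that $\phi = \phi_\beta \ge 1$ everywhere so that the various manipulations with powers of $\phi$ are monotone in the expected direction and $\phi^{-a}$ is indeed small in the tails (in fact $\phi^{-a}\to 0$ as $|(\rho,R)|\to\infty$, so the far-field supremum is genuinely small for $Z$ large). The parameter $a>0$ is free; in the application one will choose $a$ and $Z$ to make $\phi^{1+a}$ still integrable against the controlled moment bounds on $f,g$ from Lemma~\ref{lem:Gbound}, and simultaneously make $\sup_{|(\rho,R)|\ge Z}\phi^{-a}$ as small as desired, while the near-field factor $\sup_{|(\rho,R)|<Z}\phi$ is then a (large but finite) constant depending on $Z$. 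This is exactly the interpolation structure that will let us convert Wasserstein closeness of $S_t^{\mu_1}\nu$ and $S_t^{\mu_2}\nu$ into $\|\cdot\|_*$-closeness, after also invoking the regularising estimates for \eqref{eq:linearmain}.
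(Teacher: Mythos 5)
Your proof is correct and follows exactly the paper's own argument: split the integral at radius $Z$, pull out $\sup\phi$ on the near region, and on the far region use $|f-g|\le f+g$ together with $\phi=\phi^{1+a}\phi^{-a}$ before pulling out $\sup\phi^{-a}$ and enlarging the domain. The surrounding remarks on the roles of $a$, $Z$, and $\phi\ge1$ are accurate and consistent with how the lemma is used later.
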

\begin{proof}
 For every $Z$ we have
\[ \| f-g\|_* = \int_{|(\rho,R)| \leq Z} |f-g| \phi + \int_{(\rho,R)| > Z} |f-g| \phi. \] We then bound the first term by
\[ \left(\int_{|(\rho, R)| < Z} |f-g|\right) \sup_{|(\rho, R)| < Z} \phi(\rho, R). \] We then bound the second term by
\[ \left( \int_{|(\rho, R)| \geq Z} |f-g| \phi^{1+a} \phi^{-a} \right) \leq \left( \int (f+g)\phi^{1+a}\right) \sup_{|(\rho, R)| > Z} \phi^{-a}.\]
\end{proof}
\begin{lem}
For any $Z$ we have some constant $C>0$ such that
\[ \int_{|(\rho,R)| \leq Z} |f-g| \, \dr \dR  \leq C\left( \\\int_{|(\rho,R)|\leq Z}(|\nabla f|^2 + |\nabla g|^2) \, \dr \dR \right)^{1/4} \left( W_1(f,g) \right)^{1/2} \]
\end{lem}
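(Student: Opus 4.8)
The plan is to prove this as an interpolation inequality by a mollification argument combined with the Kantorovich--Rubinstein dual characterisation of $W_1$, exploiting that $W_1$ behaves like a negative Sobolev ($\dot W^{-1,\infty}$-type) distance while the hypothesis controls an $\dot H^1$-type quantity. Set $h=f-g$; since $f$ and $g$ are probability densities, $\int_{\mathbb{R}^2} h = 0$. Write $B_Z=\{|(\rho,R)|\le Z\}$ and take $\varphi:=\sgn(h)\mathbf 1_{B_Z}$, so that $\int_{\mathbb{R}^2} h\,\varphi=\int_{B_Z}|h|$ exactly, with $\|\varphi\|_\infty\le 1$ and $\operatorname{supp}\varphi\subseteq B_Z$. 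Fix an even mollifier $\psi_\delta(x)=\delta^{-2}\psi(x/\delta)$ with $\psi$ smooth, nonnegative, compactly supported of unit mass, and put $\varphi_\delta:=\varphi*\psi_\delta$, which satisfies $\|\varphi_\delta\|_\infty\le1$, $\operatorname{Lip}(\varphi_\delta)\le\|\nabla\psi_\delta\|_{L^1}\le C/\delta$, and $\operatorname{supp}\varphi_\delta\subseteq B_{Z+\delta}$. The starting point is the splitting
\[ \int_{B_Z}|h|=\int_{\mathbb{R}^2} h\,\varphi=\int_{\mathbb{R}^2}\bigl(h-h*\psi_\delta\bigr)\varphi+\int_{\mathbb{R}^2} h\,\varphi_\delta, \]
using the self-adjointness of convolution with the even kernel $\psi_\delta$ in the first term.

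Next I would estimate the two pieces. For the second piece, since $\int h=0$ and $f,g$ have finite first moments (indeed exponential ones), the Kantorovich--Rubinstein duality gives $\int h\,\varphi_\delta=\int\varphi_\delta\,\mathrm d(f-g)\le\operatorname{Lip}(\varphi_\delta)\,W_1(f,g)\le (C/\delta)\,W_1(f,g)$. For the first piece, the elementary mollification estimate $\|u-u*\psi_\delta\|_{L^1(\Omega)}\le C\delta\|\nabla u\|_{L^1(\Omega_\delta)}$ — obtained by writing $u(x)-u(x-y)=\int_0^1 y\cdot\nabla u(x-sy)\,\mathrm ds$ and applying Fubini, with $\Omega_\delta$ the $\delta$-neighbourhood of $\Omega$ — together with $\|\varphi\|_\infty\le1$ and $\operatorname{supp}\varphi\subseteq B_Z$ yields $\int(h-h*\psi_\delta)\varphi\le\|h-h*\psi_\delta\|_{L^1(B_Z)}\le C\delta\|\nabla h\|_{L^1(B_{Z+\delta})}$. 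Combining the two bounds and optimising over $\delta$ (in the regime of small $W_1$ relevant to the continuity argument one may take $\delta=\bigl(W_1(f,g)/\|\nabla h\|_{L^1(B_{Z+1})}\bigr)^{1/2}<1$) gives
\[ \int_{B_Z}|h|\le C\,\|\nabla h\|_{L^1(B_{Z+1})}^{1/2}\,W_1(f,g)^{1/2}. \]
Finally, Cauchy--Schwarz on the bounded set $B_{Z+1}$ gives $\|\nabla h\|_{L^1(B_{Z+1})}\le|B_{Z+1}|^{1/2}\|\nabla h\|_{L^2(B_{Z+1})}$, and since $|\nabla h|^2\le 2(|\nabla f|^2+|\nabla g|^2)$ we arrive at
\[ \int_{B_Z}|h|\le C(Z)\Bigl(\int_{B_{Z+1}}(|\nabla f|^2+|\nabla g|^2)\Bigr)^{1/4}W_1(f,g)^{1/2}, \]
which is the asserted inequality; note that the exponents $1/4$ and $1/2$ drop out automatically from the single optimisation in $\delta$ followed by one Cauchy--Schwarz.

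The only point requiring care is the localisation. Mollification at scale $\delta$ couples the values of $h$ on $B_Z$ to $\nabla h$ on the slightly larger ball $B_{Z+\delta}$, and the passage from $L^1$ to $L^2$ of the gradient introduces the volume factor $|B_{Z+1}|^{1/2}$, so that $C$ genuinely depends on $Z$ — which is precisely what the statement permits. Replacing $B_{Z+1}$ by $B_Z$ on the right is harmless in our context: the interior parabolic regularity estimates for \eqref{eq:linearmain} established in the continuity step provide $L^2$ bounds on the gradients on all of $\mathbb{R}^2$, so one simply absorbs the enlargement into $C(Z)$; alternatively one multiplies $h$ by a cutoff supported in $B_Z$, at the cost of an additive term $\lesssim W_1(f,g)^{1/2}$ controlled via the unit total mass of $f$ and $g$. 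Everything else — the duality bound, the mollification estimate, the optimisation — is routine, so this localisation bookkeeping is the main (and rather mild) obstacle.
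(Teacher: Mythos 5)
Your proof is correct and follows essentially the same route as the paper's: both use the $L^1$/$W_1$ duality pair, mollify at scale $\delta$ to split the test function into a Lipschitz part (paired against $W_1$) and a remainder (paired against the gradient), and then optimise over $\delta$ to produce the $1/2$ and $1/4$ exponents. The minor differences are cosmetic: you fix the extremal test function $\sgn(f-g)\mathbf 1_{B_Z}$ and pass through the $L^1$ mollification estimate followed by Cauchy--Schwarz on the bounded set, whereas the paper keeps the supremum over $\|\psi\|_\infty\le1$ and lands directly on the $L^2$ gradient via $\|\eta^\epsilon(z)|z|\|_2$. You are in fact more careful than the paper about the domain enlargement $B_Z\to B_{Z+\delta}$ caused by mollification (the paper silently writes $\int_{z\le Z}$ for the gradient even though $f(z-z')$ samples outside $B_Z$), and your remark on how to absorb this into $C(Z)$ or a cutoff is a legitimate fix for a genuine, if harmless, imprecision in the original.
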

\begin{proof}
Let $\eta$ be a smooth mollifying function. We have that
\[ \int_{|(\rho,R)| \leq Z} |f-g|\, \dr \dR = \sup_{\|\psi\|_\infty \leq 1} \int _{|(\rho,R)| \leq Z} (f-g)\psi \, \dr \dR.  \] We also have,
\[ W_1(f,g) = \sup_{\|\psi\|_{Lip} \leq 1} \int (f-g)\psi \, \dr \dR  \] Using these formulations, let us take $\psi$ to have $\|\psi\|_\infty \leq 1$. Furthermore, letting $\eta^\epsilon = \eta(  z/\epsilon)/\epsilon^2$, we can see
\begin{align*}
    &\int_{|(\rho,R)| \leq Z} (f-g) \psi\, \dr \dR \\ =& \int_{|(\rho,R)| \leq Z} f (\psi - \eta^\epsilon * \psi) \, \dr \dR+ \int_{|(\rho,R)| \leq Z} (f-g) \eta^\epsilon * \psi \, \dr \dR + \int_{|(\rho,R)| \leq Z} g( \psi - \eta^\epsilon * \psi) \, \dr \dR\\
     \leq& \|\eta^\epsilon * \psi\|_{Lip} W_1(f,g)  + \int_{|(\rho,R)| \leq Z} \psi \left(f - \eta^\epsilon * f + g - \eta^ \epsilon * g \right) \, \dr \dR\\
     \leq &\|\eta^\epsilon * \psi\|_{Lip} W_1(f,g)  + \int_{z \leq Z} \int_{z'} \psi(z) \eta^{\epsilon}(z') \left(f(z) + g(z) - f(z-z') - g(z-z') \right)\, \dr \dR\\
     \leq &\|\eta^\epsilon * \psi\|_{Lip} W_1(f,g) + \|\eta^\epsilon(z)|z|\|_2 \left(\int_{z\leq Z}\left(|\nabla f|^2 + |\nabla g|^2 \right)\, \dr \dR\right)^{1/2}\\
     \leq & C \epsilon^{-1} W_1(f,g) + C \epsilon \left(\int_{z \leq Z} \left( |\nabla f|^2 + |\nabla g|^2\right)\, \dr \dR\right)^{1/2}.
\end{align*} Optimising over $\epsilon$ gives
\[ \int_{|(\rho,R)| \leq Z} (f-g) \psi \, \dr \dR \leq C W_1(f,g)^{1/2}\left(\int_{z \leq Z} \left( |\nabla f|^2 + |\nabla g|^2\right)\, \dr \dR\right)^{1/4}.\]  
This implies our result.
\end{proof}
Combining these two lemmas we have the following lemma.
\begin{lem}
For any radially decreasing weight $w(z)$ and any $a>0$ we have,
\[ \| f - g\|_* \leq \left(\int (f+g)\phi^{1+a} \right)\sup_{|(\rho, R)|> Z} \phi(\rho, R)^{-a} + CW_1(f,g)^{1/2}\left( \int w(z)(|\nabla f|^2 +|\nabla g|^2)\right)^{1/4}\sup_{|(\rho, R)|<Z}\phi(\rho,R).   \] This implies
\[ \|f-g\|_\beta \leq C \|f+g\|_{\beta(1+a)}^{1/(1+a)}W_1(f,g)^{a/2(1+a)}\left( \int w(z)\left( |\nabla f|^2 + |\nabla g|^2\right)\right)^{a/4(1+a)}. \]
\end{lem}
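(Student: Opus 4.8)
The plan is to chain the two preceding lemmas and then optimise over the truncation radius $Z$.

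For the first inequality I would start from Lemma~\ref{lem:interpolation}, which already splits $\|f-g\|_*$ into a tail term $\bigl(\int(f+g)\phi^{1+a}\bigr)\sup_{|(\rho,R)|>Z}\phi(\rho,R)^{-a}$ and a bulk term $\bigl(\int_{|(\rho,R)|<Z}|f-g|\bigr)\sup_{|(\rho,R)|<Z}\phi(\rho,R)$. Into the bulk term I would insert the immediately preceding lemma, which bounds $\int_{|(\rho,R)|\le Z}|f-g|$ by $C\bigl(\int_{|(\rho,R)|\le Z}(|\nabla f|^2+|\nabla g|^2)\bigr)^{1/4}W_1(f,g)^{1/2}$; then, since $w$ is radially decreasing, replacing the ball integral of $|\nabla f|^2+|\nabla g|^2$ by $\intR w(z)(|\nabla f|^2+|\nabla g|^2)$ costs only a constant, and the first displayed inequality follows.

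For the second inequality I would specialise $\phi=\phi_\beta$, so that the left-hand side becomes $\|f-g\|_\beta$. The algebraic point is that the exponent of $\phi_\beta$ is linear in $\beta$, whence $\phi_\beta^{1+a}=\phi_{(1+a)\beta}$ and $\int(f+g)\phi_\beta^{1+a}=\|f+g\|_{(1+a)\beta}$. Next I would control the two suprema: the quadratic form $Q(\rho,R)=1+4\rho^2/\gamma+2\rho R+\gamma R^2$ under the square root is positive definite, so $c_-\langle(\rho,R)\rangle\le\sqrt{Q(\rho,R)}\le c_+\langle(\rho,R)\rangle$ for constants $0<c_-\le c_+$ depending only on $\gamma$; hence $\sup_{|(\rho,R)|>Z}\phi_\beta^{-a}\le e^{-a\beta c_-Z}$ and $\sup_{|(\rho,R)|<Z}\phi_\beta\le C e^{\beta c_+Z}$ (for $Z$ large; small $Z$ is trivial). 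Writing $A=\|f+g\|_{(1+a)\beta}$ and $B=C\,W_1(f,g)^{1/2}\bigl(\intR w(z)(|\nabla f|^2+|\nabla g|^2)\bigr)^{1/4}$, the first inequality reads $\|f-g\|_\beta\le Ae^{-a\beta c_-Z}+Be^{\beta c_+Z}$, and minimising over $Z\ge 0$ with the elementary bound $\min_{Z\ge0}\bigl(Ae^{-pZ}+Be^{qZ}\bigr)\le C_{p,q}\,A^{q/(p+q)}B^{p/(p+q)}$ (with $p=a\beta c_-$, $q=\beta c_+$) then gives a bound of the claimed form $\|f-g\|_\beta\le C\,\|f+g\|_{(1+a)\beta}^{\theta_0}W_1(f,g)^{\theta_1}\bigl(\intR w(|\nabla f|^2+|\nabla g|^2)\bigr)^{\theta_2}$ with $\theta_0=q/(p+q)\in(0,1)$, $\theta_1=p/(2(p+q))$ and $\theta_2=p/(4(p+q))$. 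With the cruder choice $c_-=c_+$ (legitimate after shrinking $\beta$) these exponents become exactly $1/(1+a)$, $a/2(1+a)$ and $a/4(1+a)$, as written.

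I expect the only delicate point to be the exponent bookkeeping in the $Z$-optimisation — in particular ensuring that the power of $\|f+g\|_{(1+a)\beta}$ stays strictly below $1$ (this is precisely what later turns Wasserstein-continuity of $\mu\mapsto S_t^\mu\nu$ into continuity in the $\|\cdot\|_\beta$-topology on the bounded set $\mathscr{C}_{M,\beta}$) and that the optimisation is carried out in the regime where it is used, namely $W_1(f,g)\to 0$ with $A$ and the gradient integral bounded, where the optimal $Z$ is indeed non-negative. The monotonicity-of-$w$ step, the identity $\phi_\beta^{1+a}=\phi_{(1+a)\beta}$ and the two-sided exponential control of $\phi_\beta$ are all routine. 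I would also record that the right-hand side is only useful because $\intR w(|\nabla f|^2+|\nabla g|^2)$ is finite for the measures to which the lemma is applied — namely $f=S_t^{\mu_1}\nu$ and $g=S_t^{\mu_2}\nu$ with $t$ fixed and positive — which is exactly where the hypoelliptic regularising estimates for \eqref{eq:linearmain} will enter.
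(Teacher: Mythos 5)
Your overall plan is the paper's intended one: the paper gives no proof beyond ``Combining these two lemmas we have the following lemma,'' so chaining Lemma~\ref{lem:interpolation} with the $W_1$-interpolation lemma and then optimising over the truncation radius is exactly the route. However, there are two gaps in your exponent bookkeeping, and the flag you raise at the end does not actually resolve them.

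First, the claim that $c_-=c_+$ becomes ``legitimate after shrinking $\beta$'' is not correct. The constants $c_\pm$ are the extremal eigenvalues of the fixed quadratic form $4\rho^2/\gamma+2\rho R+\gamma R^2$ (matrix $\bigl(\begin{smallmatrix}4/\gamma & 1\\ 1 & \gamma\end{smallmatrix}\bigr)$), which do not depend on $\beta$, and they are unequal except in degenerate cases. With Euclidean balls your $Z$-optimisation therefore produces exponents $c_+/(ac_-+c_+)$, $\,ac_-/(2(ac_-+c_+))$, $\,ac_-/(4(ac_-+c_+))$, not the stated $1/(1+a)$, $a/(2(1+a))$, $a/(4(1+a))$. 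The clean fix is to truncate on super-level sets of $\phi_\beta$ rather than Euclidean balls: write the splitting in Lemma~\ref{lem:interpolation} and the $W_1$-interpolation lemma over $\{\sqrt{Q}<Z\}$ and $\{\sqrt{Q}>Z\}$ (both proofs carry over verbatim, since they use no geometric property of the ball), so that $\sup_{\sqrt{Q}<Z}\phi_\beta=e^{\beta Z}$ and $\sup_{\sqrt{Q}>Z}\phi_\beta^{-a}=e^{-a\beta Z}$ exactly, and then the optimisation gives the stated exponents. Your derived bound with Euclidean balls is still of the right qualitative form (all three exponents lie in $(0,1)$), so it would serve the same purpose in Proposition~\ref{prop:gcontinuous}, but it is not the bound as written.

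Second, the step ``replacing the ball integral by $\int w(|\nabla f|^2+|\nabla g|^2)$ costs only a constant'' is not correct for a general radially decreasing $w$: since $w\geq w(Z)$ on $\{|z|\leq Z\}$, the cost is a factor $w(Z)^{-1/4}$, which is unbounded in $Z$ whenever $w\to 0$ at infinity. This is exactly the regime in which the lemma is applied (in Proposition~\ref{prop:gcontinuous} the weight is $m(\partial_\rho a_1)^2$, which decays), so the $Z$-dependence must be carried into the optimisation and changes the exponents (or one must add a hypothesis such as $w$ bounded below, or keep the ball-restricted integral $\int_{|z|\leq Z}(|\nabla f|^2+|\nabla g|^2)$ on the right-hand side). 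This issue is present in the paper's own statement of the lemma, not only in your reconstruction, but your proof should not assert the passage is free.
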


This section is inspired partly from \cite{Cao}. We extend the regularisation estimates from this paper by including the weight $\partial_\rho a_1[\mu](\rho)$ in front of the terms $\partial_R f$, this allows us to deal with the weak mixing term.

\begin{lem}
For any weight function $m$, with $|\nabla m| \leq Cm$, $\partial^2_{\rho^2}m \leq Cm$, we have
\begin{align*}
    \frac{\mathrm{d}}{\mathrm{d}t} \intR m f^2 \, \dr \dR &\leq C\intR m f^2 \, \dr \dR - \sigma^2 \intR m (\partial_\rho f)^2 \, \dr \dR,\\
    \frac{\mathrm{d}}{\mathrm{d}t}\intR m (\partial_\rho f)^2 \, \dr \dR &\leq C \intR m f^2 \dr \dR + C\intR  m (\partial_\rho f)^2 \, \dr \dR - \intR m \partial_\rho a_1 \partial_\rho f \partial_R f \, \dr \dR\\
    & \quad - \sigma^2 \intR m (\partial^2_{\rho^2} f)^2 \, \dr\dR,\\
    \frac{\mathrm{d}}{\mathrm{d}t} \intR m \partial_\rho f \partial_R f \dr \dR & \leq C \intR m f^2 \, \dr \dR + C\int m|\partial_\rho f \partial_R f| \, \dr \dR - \intR m \partial_\rho a_1 (\partial_R f)^2 \, \dr \dR \\
    & \quad - \sigma^2 \intR m(\partial^2_{\rho^2} f) (\partial^2_{\rho,R}f) \, \dr \dR,\\
    \frac{\mathrm{d}}{\mathrm{d}t} \intR m (\partial_R f)^2 \, \dr \dR &\leq C \intR m f^2 \dr \dR + C\intR m(\partial_R f)^2 \, \dr \dR - \sigma^2 \intR m (\partial^2_{\rho R}f)^2 \, \dr \dR.
\end{align*}
\end{lem}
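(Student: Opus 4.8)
These four bounds are formal a priori energy estimates for the linear equation \eqref{eq:linearmain} — they use no information about the steady state, only the equation itself — and I would obtain each of them by multiplying \eqref{eq:linearmain} (or a $\rho$- or $R$-derivative of it) by $2m$ times the relevant quantity, integrating over $\mathbb{R}^2$, and integrating by parts until every term left is either one of the listed diffusive ``good'' terms, one of the listed cross terms, or something dominated by $C\int m f^2$ and the corresponding lower-order energies. Three facts do all the work: (i) under Assumption~\ref{assumptionb} the function $b$ and its first two derivatives are bounded, so $a_1[\mu]$, $a_2[\mu]$ and all of $\partial_\rho^k a_1[\mu]$, $\partial_R^k a_2[\mu]$ with $k\le 2$ are bounded by constants that do not depend on $\mu$, being convolutions of bounded functions against a probability measure; (ii) $a[\mu](\rho,R)=a_1[\mu](\rho)-a_2[\mu](R)$, so $\partial_\rho a[\mu]=\partial_\rho a_1[\mu]$ is a function of $\rho$ only and $\partial_R a[\mu]=-\partial_R a_2[\mu]$ a function of $R$ only; and (iii) the weight hypotheses $|\nabla m|\le Cm$ and $\partial^2_{\rho^2}m\le Cm$. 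For the first line one multiplies \eqref{eq:linearmain} by $2mf$: the transport term $-2\int mf\,\partial_R(a[\mu]f)$ and the drift term $2\gamma\int mf\,\partial_\rho(a_1[\mu]f)$, after one integration by parts (in $R$, resp.\ in $\rho$), reduce to terms of type $\int (\partial_R m)\,a[\mu]\,f^2$, $\int m\,(\partial_R a[\mu])\,f^2$ and their $\rho$-analogues, all $\le C\int mf^2$ by (i)--(iii), while the diffusion term $\sigma^2\int mf\,\partial^2_{\rho^2}f$, integrated by parts in $\rho$, yields exactly $-\sigma^2\int m(\partial_\rho f)^2$ together with $\tfrac{\sigma^2}{2}\int(\partial^2_{\rho^2}m)f^2\le C\int mf^2$.

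For the remaining three lines I would apply the same scheme to the differentiated equations. Writing $g=\partial_\rho f$ and differentiating \eqref{eq:linearmain} in $\rho$, fact (ii) gives
\[ \partial_t g = -(\partial_\rho a_1[\mu])\,\partial_R f \;-\; \partial_R(a[\mu]\,g) \;+\; \tfrac{\sigma^2}{2}\,\partial^2_{\rho^2}g \;+\; \gamma\,\partial^2_{\rho^2}(a_1[\mu]\,f). \]
Testing against $2mg$, the first term on the right contributes exactly a negative multiple of $\int m\,(\partial_\rho a_1[\mu])\,\partial_\rho f\,\partial_R f$ — and this is the point of the whole computation: \emph{no} integration by parts is performed on it, because $\partial_\rho a_1[\mu]$ carries no $R$-dependence, so it is left as an explicit weight in front of $\partial_R f$ rather than being bounded by $\sup|\partial_\rho a_1[\mu]|$; this is precisely the modification of the argument of \cite{Cao} that will later allow the weak mixing term (which degenerates as $|\rho|\to\infty$) to be handled. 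The transport and drift terms are integrated by parts exactly as in the first line and absorbed into $C\int mf^2+C\int m(\partial_\rho f)^2$, keeping the pair $a[\mu]g$ together when integrating by parts so as not to generate stray $\partial_R f$-terms, and the second-order-in-$\rho$ remainders (such as $\int m\,a_1[\mu]\,f\,\partial^2_{\rho^2}f$, arising from $\gamma\partial^2_{\rho^2}(a_1[\mu]f)$) are put away by Young's inequality against the good term $-\sigma^2\int m(\partial^2_{\rho^2}f)^2$.

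The fourth line comes out the same way on differentiating \eqref{eq:linearmain} in $R$ and testing against $2m\,\partial_R f$: since $\partial_R a_2[\mu]$ depends only on $R$ all of its occurrences are just bounded, no cross term survives, and the diffusion produces the good term $-\sigma^2\int m(\partial^2_{\rho R}f)^2$. The mixed line I would get from the product rule
\[ \tfrac{d}{dt}\intR m\,\partial_\rho f\,\partial_R f \, \dr \dR = \intR m\,(\partial_t\partial_\rho f)\,\partial_R f \, \dr \dR + \intR m\,\partial_\rho f\,(\partial_t\partial_R f) \, \dr \dR, \]
substituting the two differentiated equations: the term $-(\partial_\rho a_1[\mu])\partial_R f$ in $\partial_t\partial_\rho f$, paired with $m\,\partial_R f$, yields the cross term $-\int m\,(\partial_\rho a_1[\mu])(\partial_R f)^2$, and pairing the two diffusion terms and integrating by parts in $\rho$ produces the mixed good term $-\sigma^2\int m(\partial^2_{\rho^2}f)(\partial^2_{\rho R}f)$, everything else being lower order and absorbed as before. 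Throughout, these are formal manipulations valid for sufficiently smooth, rapidly decaying solutions; in the application they will be used for the (smooth) semigroup $S_t^\mu$, after a routine approximation.

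I expect the main obstacle to be the bookkeeping rather than any single idea: one has to check that every term generated by the integrations by parts is either exactly one of the terms listed or is dominated by one of the negative diffusive terms — where, since the four lines are meant to be used jointly (the ``good'' term in the mixed line is itself indefinite and only becomes useful once combined with the square terms from the second and fourth lines via Young's inequality), a diffusive term from a neighbouring line may be borrowed — and one must be disciplined about not estimating the cross terms away, because the subsequent hypocoercivity argument combines these four inequalities into a single weighted functional of the shape $A\int mf^2 + B\int m(\partial_\rho f)^2 + \varepsilon\int m\,\partial_\rho f\,\partial_R f + D\int m(\partial_R f)^2$, in which the cross terms are exactly what carries the dissipation from the $\rho$-variable into the $R$-variable. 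The one genuinely delicate ingredient, and the novelty over \cite{Cao}, is the systematic retention of the weight $\partial_\rho a_1[\mu]$ in front of $\partial_R f$: it is what prevents the $R$-dissipation from being lost where the mixing coefficient $\partial_\rho a_1[\mu]=a_1'[\mu](\rho)$ decays, at the cost of having to track the spatially varying cross terms through the rest of the proof.
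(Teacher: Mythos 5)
Your proposal is correct and follows essentially the same approach as the paper: multiply the equation (and its $\rho$- and $R$-derivatives) by $2m$ times the relevant quantity, integrate by parts, exploit that $a_1$ depends on $\rho$ alone and $a_2$ on $R$ alone, and — crucially — refrain from integrating by parts on (or crudely bounding) the $\partial_\rho a_1$ cross term so that the degenerating mixing weight is carried explicitly into the later functional. You also correctly identify this last point as the refinement of the estimates of \cite{Cao}, which the paper itself flags. The only cosmetic divergence is that for the second-order-in-$\rho$ remainder $2\gamma\int m\,a_1\,\partial_\rho f\,\partial^2_{\rho^2}f$ (you wrote $\int m\,a_1\,f\,\partial^2_{\rho^2}f$, which is a slip) you propose Young's inequality against $-\sigma^2\int m(\partial^2_{\rho^2}f)^2$, whereas the paper simply integrates by parts, writing it as $-\gamma\int\partial_\rho(ma_1)(\partial_\rho f)^2$; the paper's route preserves the full coefficient $\sigma^2$ on the good term as asserted in the lemma, while yours would eat a small piece of it, so the integration-by-parts version is the one that gives the stated constants exactly.
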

\begin{proof}
These inequalities are all essentially calculations. Let us start with the first one,
\begin{align*}
    \frac{\mathrm{d}}{\mathrm{d}t} \intR m f^2 \, \dr \dR &= 2\intR mf \left(- \partial_R((a_1-a_2)f) + \frac{\sigma^2}{2} \partial_{\rho^2}^2 f + \gamma \partial_\rho(a_1 f) \right) \dr \dR\\
    & = 2 \intR \left((a_1-a_2)\partial_R(mf)f - \frac{\sigma^2}{2} \partial_\rho f \partial_\rho (mf) - \gamma a_1 f \partial_\rho (mf)\right) \dr \dR\\
    & = 2 \intR \left((a_1-a_2)\partial_R m - \gamma a_1 \partial_\rho m \right)f^2 \,\dr \dR \\
    & \quad + \intR \left((a_1-a_2)m \partial_R(f^2) - \gamma a_1 m \partial_\rho (f^2) \right) \dr \dR \\
    & \quad - \intR \frac{\sigma^2}{2} \partial_\rho m \partial_\rho (f^2) \dr \dR  - \sigma^2 \intR m (\partial_\rho f)^2 \,\dr \dR\\
    &= \intR \left((a_1-a_2)\partial_R m + \partial_R a_2 m - \gamma a_1 \partial_\rho m + \gamma \partial_\rho a_1 m + \frac{\sigma^2}{2} \partial^2_{\rho^2}m \right)f^2\, \dr \dR\\
    & \quad - \sigma^2 \intR m (\partial_\rho f)^2 \,\dr \dR,
\end{align*}
\begin{align*}
    \frac{\mathrm{d}}{\mathrm{d}t}\intR m (\partial_\rho f)^2 \, \dr \dR &= 2 \intR m \partial_\rho f \partial_\rho\left(-\partial_R((a_1-a_2)f) + \frac{\sigma^2}{2} \partial^2_{\rho^2}f + \gamma \partial_\rho(a_1 f) \right)\dr \dR\\
    &= 2 \intR m \partial_\rho f \left(-\partial_\rho a_1 \partial_R f + \partial_R a_2 \partial_\rho f +(a_2-a_1)\partial^2_{\rho R}f \right)\dr \dR\\
    & \quad + 2\intR m \partial_\rho f \left(\frac{\sigma^2}{2} \partial^3_{\rho^3} f + \gamma \partial^2_{\rho^2}a_1 f + 2 \gamma \partial_\rho a_1 \partial_\rho f + \gamma a_1 \partial^2_{\rho^2}f \right)\dr \dR\\
    &= -2 \intR m \partial_\rho a_1 (\partial_\rho f)(\partial_R f)\dr \dR + \intR \left(m \partial_R a_2 +(a_1-a_2)\partial_R m) \right)(\partial_\rho f)^2\,\dr \dR\\
    & \quad - \sigma^2 \intR m (\partial^2_{\rho^2} f)^2 \,\dr \dR + \frac{\sigma^2}{2} \intR (\partial^2_{\rho^2}) (\partial_\rho f)^2 - \gamma \intR \partial_\rho(m \partial^2_{\rho^2}a_1) f^2 \, \dr \dR \\
    & \quad + \intR (4 \gamma \partial_\rho a_1 m - \gamma \partial_\rho(a_1 m) ) (\partial_\rho f)^2 \,\dr \dR\\
    & = - 2 \intR m \partial_\rho a_1 \partial_\rho f \partial_R f \,\dr \dR - \sigma^2 \intR m(\partial^2_{\rho^2}f)^2 \, \dr \dR - \gamma \intR \partial_\rho(m \partial^2_{\rho^2} a_1) f^2 \, \dr \dR\\
    & \quad + \intR \left(m (\partial_R a_2 + 3\gamma \partial_\rho a_1) + (a_1-a_2)\partial_R m \dr \dR- \gamma a_1 \partial_\rho m + \frac{\sigma^2}{2}\partial^2_{\rho^2} m \right)(\partial_\rho f)^2\,\dr \dR,
\end{align*}
\begin{align*}
    \frac{\mathrm{d}}{\mathrm{d}t} \intR m \partial_\rho f \partial_R f \, \dr \dR & = \intR m \partial_\rho f \partial_R \left(-\partial_R ((a_1-a_2)f) + \frac{\sigma^2}{2} \partial^2_{\rho^2}f + \gamma \partial_\rho (a_1 f) \right) \dr \dR\\
    & \quad + \intR m \partial_R f \partial_\rho \left(-\partial_R((a_1-a_2)f) + \frac{\sigma^2}{2} \partial^2_{\rho^2} f + \gamma \partial_\rho (a_1 f) \right)\dr \dR\\
    &=\intR m \left(\partial^2_{R^2}a_2 f \partial_\rho f + 3 \partial_Ra_2 \partial_R f \partial_\rho f - \partial_\rho a_1 (\partial_R f)^2 - (a_1-a_2)(\partial_\rho f \partial^2_{R^2}f + \partial_R f \partial^2_{\rho R}f)  \right) \, \dr \dR\\
    & \quad + \frac{\sigma^2}{2} \intR m \left( \partial_\rho f \partial^2_{\rho^2 R}f + \partial_R f \partial^3_{\rho^3} f\right) \, \dr \dR\\
    & \quad + \gamma \intR m \left(\partial_\rho a_1 \partial_\rho f \partial_R f + a_1 \partial_\rho f \partial^2_{\rho R} f + \partial^2_{\rho^2}a_1 f \partial_R f + 2 \partial_\rho a_1 \partial_\rho f \partial_R f +a_1 \partial_R f \partial_{\rho^2} f \right)\,\dr \dR\\
    &=  \frac{1}{2}\intR \left(\partial_{R^2}^2 a_2 \partial_\rho m \dr \dR + \partial_{\rho^2}^2 a_1 \partial_R m \right)f^2 \, \dr \dR  \\
    &\quad  - \intR m \partial_\rho a_1 (\partial_R f)^2 \dr \dR - \sigma^2 \intR m \partial^2_{\rho^2}f \partial^2_{\rho R} f \, \dr \dR\\
    &\quad + \intR  \left(2m \partial_R a_2 + (a_1-a_2) \partial_R m + \frac{\sigma^2}{2} \partial^2_{\rho^2}m + 2 \gamma \partial_\rho a_1 m - \gamma a_1 \partial_\rho m\right)\partial_\rho f \partial_R f  \, \dr \dR,
\end{align*}
\begin{align*}
    \frac{\mathrm{d}}{\mathrm{d}t} \intR m (\partial_R f)^2 \dr \dR & = 2 \intR m \partial_R f \partial_R \left( - \partial_R ((a_1-a_2)f) + \frac{\sigma^2}{2} \partial^2_{\rho^2} f + \gamma \partial_\rho(a_1 f)\right)\, \dr \dR \\
    &= 2 \intR m (\partial_R f) \left( \partial^2_{R^2} a_2 f + 2 \partial_R a_2 \partial_R f - (a_1-a_2) \partial^2_{R^2}f \right)\, \dr \dR \\
    & \quad + \sigma^2 \intR m \partial_R f \partial^3_{R\rho^2}f \, \dr \dR + 2 \gamma \intR m \partial_R f (\partial_\rho a_1 \partial_R f + a_1 \partial^2_{\rho R} f )\, \dr \dR \\
    &= - \intR \partial_R(m \partial^2_{R^2}a_2)f^2 m \, \dr \dR - \sigma^2 \intR m (\partial^2_{\rho R} f)^2 \, \dr \dR \\
    & \quad  \intR \left(3m \partial_R a_2 + (a_1 -a_2) \partial_R m + \frac{\sigma^2}{2} \partial^2_{\rho^2} m + \gamma m \partial_\rho a_1 - \gamma a_1 \partial_\rho m \right)(\partial_R f)^2 \, \dr \dR.
\end{align*}
\end{proof}

Using this lemma we can prove a first regularisation result.
\begin{lem}
Suppose that $m$ satisfies the conditions from the previous lemma, then we can choose $A_1, A_2, A_3$ such that $\mathcal{F}(t,f)$ defined by
\begin{align*} \mathcal{F}(t,f):= \intR\left[m \left(f^2 + A_1 t^2 (\partial_\rho f)^2 + A_2 t^4 (\partial_\rho a_1) (\partial_\rho f)(\partial_R f) + A_3 t^6 (\partial\rho a_1)^2(\partial_R f)^2 \right)\right] \, \dr \dR, 
\end{align*}
is a decreasing quantity for $t$ sufficiently small. Specifically, that there exists a $t_*$ such that for $t \leq t_*$ there is some $\Lambda$ with 
\[ \frac{\mathrm{d}}{\mathrm{d}t} \mathcal{F}(t,f_t) \leq \intR f^2 m \, \dr \dR - \Lambda \intR m \left((\partial_\rho f)^2 \, \dr \dR + t^4 (\partial_\rho a_1)^2 (\partial_R f)^2 \right) \, \dr \dR. \]
\end{lem}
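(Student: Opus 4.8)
The plan is to differentiate $\mathcal{F}(t,f_t)$ along the flow of \eqref{eq:linearmain} and to establish the stated inequality by a careful accounting of the powers of $t$, in the spirit of the hypocoercive regularisation functionals of Villani and H\'erau. By the product rule, $\frac{\mathrm{d}}{\mathrm{d}t}\mathcal{F}(t,f_t)$ splits into (a) the contributions of the explicit $t$-dependence of the coefficients, namely $2A_1 t\intR m(\partial_\rho f)^2\,\dr\dR$, $4A_2 t^3\intR m\,\partial_\rho a_1\,\partial_\rho f\,\partial_R f\,\dr\dR$ and $6A_3 t^5\intR m(\partial_\rho a_1)^2(\partial_R f)^2\,\dr\dR$; and (b) the contributions of the evolution of $f$, for which we invoke the four identities of the previous lemma with the weights $m$, $m$, $m\partial_\rho a_1$, $m(\partial_\rho a_1)^2$ in turn. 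The last two are not literally admissible weights, but rerunning those computations with $\tilde m\in\{m\partial_\rho a_1,\,m(\partial_\rho a_1)^2\}$ produces only finitely many extra terms, in each of which a derivative has fallen on a factor $\partial_\rho a_1$; these equal a bounded function (since $b$ is smooth, so $a_1[\mu]$ and all its $\rho$-derivatives are bounded) times one of $f^2$, $(\partial_\rho f)^2$, $\partial_\rho f\,\partial_R f$, and will be absorbed below.

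From this one reads off the structurally decisive terms: the diffusive gain $-\sigma^2\intR m(\partial_\rho f)^2\,\dr\dR$ from the first identity; the indefinite cross term $-A_1 t^2\intR m\,\partial_\rho a_1\,\partial_\rho f\,\partial_R f\,\dr\dR$ from the dynamics of the $A_1 t^2$-block; and, crucially, the gain $-A_2 t^4\intR m(\partial_\rho a_1)^2(\partial_R f)^2\,\dr\dR$ from the dynamics of the $A_2 t^4$-block, which is how the (weak) mixing into the $R$-variable enters. Every other contribution involving at most first derivatives is either bounded by $C\intR mf^2\,\dr\dR$, or is of strictly lower order in $t$ than one of these two gains while carrying a compatible derivative structure. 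Collecting the terms that involve only $\partial_\rho f$ and $\partial_\rho a_1\,\partial_R f$ gives the quadratic form
\begin{align*}
Q := -(\sigma^2 - \mathcal{O}(t))\intR m(\partial_\rho f)^2\,\dr\dR &+ (A_1 t^2 + \mathcal{O}(t^3))\intR m\,\partial_\rho a_1\,\partial_\rho f\,\partial_R f\,\dr\dR\\
&- (A_2 t^4 - \mathcal{O}(t^5))\intR m(\partial_\rho a_1)^2(\partial_R f)^2\,\dr\dR
\end{align*}
in the pair $\bigl(\|\sqrt{m}\,\partial_\rho f\|_{L^2},\,\|\sqrt{m}\,\partial_\rho a_1\,\partial_R f\|_{L^2}\bigr)$, the cross integral being bounded by the product of the two norms via Cauchy--Schwarz. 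For $t$ small this form is negative definite as soon as the discriminant inequality $A_1^2<4\sigma^2 A_2$ holds --- the powers of $t$ cancel in the discriminant, leaving a condition on the constants only --- and then $Q\le-\Lambda\bigl(\intR m(\partial_\rho f)^2\,\dr\dR + t^4\intR m(\partial_\rho a_1)^2(\partial_R f)^2\,\dr\dR\bigr)$.

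The terms involving second derivatives, from the three weighted identities, assemble --- up to remainders of strictly lower order in $t$ than the terms already isolated --- into the block
\[ -\sigma^2\Bigl(A_1 t^2\intR m(\partial^2_{\rho^2}f)^2 + A_2 t^4\intR m\,\partial_\rho a_1\,(\partial^2_{\rho^2}f)(\partial^2_{\rho R}f) + A_3 t^6\intR m(\partial_\rho a_1)^2(\partial^2_{\rho R}f)^2\Bigr)\,\dr\dR, \]
which is $\le 0$ whenever $A_2^2\le 4A_1 A_3$; it is then discarded. The same inequality, taken strictly, makes $\mathcal{F}$ itself a pointwise nonnegative quadratic form, so $\mathcal{F}(t,f)\ge\intR mf^2\,\dr\dR$. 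The constants are therefore chosen in the order: fix $A_1$; pick $A_2$ with $A_1^2<4\sigma^2 A_2$; pick $A_3$ with $A_2^2<4A_1 A_3$; and finally pick $t_*$ small enough that every $\mathcal{O}(t),\mathcal{O}(t^3),\mathcal{O}(t^5)$ and $\mathcal{O}(t^2),\mathcal{O}(t^4),\mathcal{O}(t^6)$ remainder is at most half of its designated dominating term. Adding everything then gives $\frac{\mathrm{d}}{\mathrm{d}t}\mathcal{F}(t,f_t)\le C\intR mf^2\,\dr\dR-\Lambda\bigl(\intR m(\partial_\rho f)^2\,\dr\dR + t^4\intR m(\partial_\rho a_1)^2(\partial_R f)^2\,\dr\dR\bigr)$ for $t\le t_*$, as asserted; a harmless constant in front of $\intR mf^2$ can, if desired, be removed by a Gr\"onwall step using $\mathcal{F}\ge\intR mf^2$, which also gives the stated monotonicity up to the factor $e^{Ct}$.

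The main obstacle is exactly the handling of the indefinite cross terms, and it is what forces the exponents $t^2,t^4,t^6$. A naive Young splitting of $-A_1 t^2\intR m\,\partial_\rho a_1\,\partial_\rho f\,\partial_R f\,\dr\dR$ fails, because the $(\partial_R f)^2$-piece it produces carries a factor $t^2$, whereas the only negative term available to absorb it, $-A_2 t^4\intR m(\partial_\rho a_1)^2(\partial_R f)^2\,\dr\dR$, carries $t^4$ and is therefore smaller as $t\to 0$. The resolution is to keep the first-order block and the second-order block each intact and use their quadratic-form structure; the exponents are arranged so that $2\times 2=0+4$ (first order) and $2\times 4=2+6$ (second order), i.e.\ so that $t$ drops out of both discriminants, leaving only the $t$-independent inequalities $A_1^2\lesssim\sigma^2 A_2$ and $A_2^2\lesssim A_1 A_3$ between the constants. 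A secondary technical point, noted above, is justifying the use of the previous lemma with the weights $m\partial_\rho a_1$ and $m(\partial_\rho a_1)^2$ and checking that the extra terms so generated are genuinely of lower order.
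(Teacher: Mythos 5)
Your proof is correct and takes essentially the same approach as the paper: differentiate $\mathcal{F}$ along the flow via the four identities of the previous lemma, close the indefinite cross term by a quadratic-form (equivalently, Young's inequality) argument whose discriminant condition $A_1^2\lesssim\sigma^2 A_2$ is $t$-independent by design of the exponents $t^2,t^4,t^6$, and impose $A_2^2\le 4A_1A_3$ to discard the second-derivative block (the paper simply takes $A_1=A_2=A_3=\sigma^2$, which satisfies both). Your extra care about the effective weights $m\,\partial_\rho a_1$ and $m\,(\partial_\rho a_1)^2$, which the paper silently substitutes into the previous lemma, is a legitimate and correctly handled point, as is your spelling out of the discriminant form $A_2^2\le 4A_1A_3$ where the paper's text reads $A_2\le 4A_1A_3$.
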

\begin{proof}
We can differentiate $\mathcal{F}$ using the identities from the previous lemma. This gives 
\begin{align*}
    \frac{\mathrm{d}}{\mathrm{d}t}\mathcal{F}(t,f_t) &\leq C(1+A_1t^2+A_2t^4+A_3t^6)\intR f^2 m \, \dr \dR\\
    & \quad + \intR m(2A_1t  + A_1t^2C- \sigma^2 )(\partial_\rho f)^2 \, \dr \dR\\
    & \quad + \intR m(\partial_\rho a_1)(4A_2t^3 + A_2 t^4 C-A_1 t^2)|(\partial_\rho f)(\partial_R f)| \, \dr \dR\\
    & \quad + \intR m (\partial_\rho a_1)^2(6A_3 t^5  + CA_3 t^6  - A_2t^4)(\partial_R f)^2 \, \dr \dR\\
    & \quad -\sigma^2 \intR m \left(A_1t^2(\partial^2_{\rho^2}f)^2 +A_2t^4 \partial_\rho a_1 (\partial^2_{\rho^2}f)(\partial^2_{\rho R}f) + A_3t^6 (\partial_\rho a_1)^2 (\partial^2_{\rho R}f)^2 \right)\, \dr \dR .
\end{align*}
If $A_2 \leq 4A_1 A_3$, then the last line will vanish.
Splitting the third line up with Young's inequality will give
\begin{align*}
    \frac{\mathrm{d}}{\mathrm{d}t}\mathcal{F}(t,f_t) & \leq C(1+A_1 t^2 + A_2 t^4 + A_3 t^6) \intR f^2 m \, \dr \dR\\
    & \quad + \intR m \left(2A_1 t + A_1 t^2 C - \sigma^2 + \frac{1}{2\eta}\left|4A_2t + A_2 t^2C -A_1 \right| \right)(\partial_\rho f)^2 \, \dr \dR\\
    & \quad + \intR m \left(6A_3 t^5 + CA_3 t^6 - A_2t^4- + \frac{\eta}{2} \left|4A_tt^5 + A_2 C t^6 - A_1 t^4 \right| \right)(\partial_\rho a_1)^2(\partial_R f)^2 \, \dr \dR
\end{align*}
Now if we choose $\eta = A_1/A_2$ we have
\begin{align*} \frac{\mathrm{d}}{\mathrm{d}t}\mathcal{F}(t,f_t) &\leq C(t) \intR f^2 m  \, \dr \dR- ( \sigma^2 - A_1^2/2A_2 + o(t))\intR m (\partial_\rho f)^2 \, \dr \dR \\
&\quad - \left(\frac{A_2}{2}t^4+o(t^5) \right)\intR (\partial_\rho a_1)^2 (\partial_R f)^2 \, \dr \dR.  
\end{align*}
Therefore, if we set $A_1 =A_2=A_3 = \sigma^2$ we will have
\begin{align*}
\frac{\mathrm{d}}{\mathrm{d}t} \mathcal{F}(t,f_t) &\leq C(1+A_1t^2 + A_2 t^4 + A_3 t^6) \intR f^2m \, \dr \dR - (\sigma^2/2 + o(t)) \intR m (\partial_\rho f)^2 \, \dr \dR\\
& \quad - (\sigma^2/2 + o(t))t^3 \intR m (\partial_\rho a_1)^2 (\partial_R f)^2\, \dr \dR. 
\end{align*}
\end{proof}

\begin{proof}[Proof of proposition \ref{prop:gcontinuous}]
We know that 
\[ \intR f^2 m \, \dr \dR= \intR |f \phi| |fm \phi^{-1}| \, \dr \dR \leq \intR |f| \phi \|fm \phi^{-1}\|_\infty\, \dr \dR. \] Now, if $\mu \in \mathcal{C}_{M,\beta}$ then $G(\mu)$ is a $C^\infty$ density by H\"ormander's theorem; and, in particular, it is in $L^\infty$ and since $m\phi^{-1}$ is in $L^\infty$ this means that $\|G(\mu)m\phi^{-1}\|_\infty < \infty$, though we can't get uniform estimates on this over $\mathcal{C}_{M,\beta}$.

Integrating the result of the previous lemma gives
\[ \mathcal{F}(t,f_t) \leq C(t) \|f_t\sqrt{m}\|_2^2. \]
This implies that 
\[  \frac{\mathrm{d}}{\mathrm{d}t} \|f_t \sqrt{m}\|^2_2 \leq C \|f_t \sqrt{m}\|_2^2.\] 
Furthermore, if $\mu_1, \mu_2 \in \mathcal{C}_{M,a(1+\beta)}$ then 
\begin{align*}
    &\|S_t^{\mu_1}G(\mu_2) -S_t^{\mu_2}G(\mu_2)\|_\beta \\ 
    &\quad \leq C\|S_t^{\mu_1} G(\mu_2) + S_t^{\mu_1}G(\mu_2)\|^{1/(1+a)}  W_1(S_t^{\mu_1}G(\mu_2),S_t^{\mu_1}G(\mu_2))^{a/2(1+a)}\|m (\partial_\rho a_1)^2 (\nabla f + \nabla g)\|_2^{a/4(1+a)}\\
    & \quad \leq C(t)M^{1/(1+a)}\left( \|a_1[\mu_1]-a_1[\mu_2]\|_\infty + \|a_2[\mu_1]-a_2[\mu_2]\|_\infty \right)^{a/2(1+a)}\|G(\mu_2)m\|_2^{a/2(1+a)}
\end{align*}
We know that $\|G(\mu_2)m\|_2 < \infty$ as 
\[ \|G(\mu_2)m\|_2^2 \leq \|G(\mu_2)\phi_\beta\|_1\|G(\mu_2) m \phi_\beta^{-1}\|_\infty, \] and we know the second term is finite as $G(\mu_2)$ is a continuous probability density so is in $L^\infty.$
We also have that
\[ |a_1[\mu_1](\rho)-a_1[\mu_2](\rho)| \leq \intR |b(\rho - \rho')| |\mu_1(\rho,R) - \mu_2(\rho,R)|\, \mathrm{d}\rho \mathrm{d}R \leq \|b\|_\infty \|\mu_1 - \mu_2\|_{TV} \leq C \|\mu_1 - \mu_2\|_\beta. \]

Therefore, 
\[ \|S_t^{\mu_1}G(\mu_2) - S_t^{\mu_2}G(\mu_2)\|_\beta \leq C(t) \|\mu_1-\mu_2\|_\beta^{a/1(1+a)} \|G(\mu_2)m\|_2^{a/1(1+a)}. \]
Hence,
\[ \|G(\mu_1) - G(\mu_2)\|_\beta \leq C(t) \|\mu_1-\mu_2\|_\beta^{a/2(1+a)} \|G(\mu_2)m\|_2^{a/2(1+a)}.  \] This gives strong continuity of the map $G$ from $\mathcal{C}_{M, (1+a)\beta}$ to itself.

We also have that if $\mu_n \rightarrow \mu_\infty$ weakly then $a_i[\mu_n](\rho) \rightarrow a_i[\mu_\infty](\rho),$ so we also have that in this case
\[ \|G(\mu_n) - G(\mu_\infty)\|_\beta \rightarrow 0. \] Which implies convergence in total variation and hence that $G(\mu_n)$ converges weakly towards $G(\mu_\infty)$.

\end{proof}

\subsubsection{Putting the steps together}
We conclude by gathering the results of the previous section to prove our main Theorem~\ref{thm:main}. First we state Schauder fixed point theorem as can be found in \cite[Theorem 2.3.7]{Smar74}. 

\begin{theorem}[Schauder Fixed Point Theorem] \label{Theorem: Schauder}
Let $S$ be a non-empty, convex closed subset of a Hausdorff topological vector space and $F$ a mapping of $S$ into itself so that $F(S)$ is compact, then $F$ has a fixed point.
\end{theorem}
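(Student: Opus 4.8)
The plan is to deduce the statement from Brouwer's fixed point theorem by a finite-dimensional approximation argument — the \emph{Schauder projection}. In all of our applications the ambient space is a Banach space (a space of signed measures with a weighted total-variation norm, or the space of probability measures with the weak topology), so we may freely use the existence of convex neighbourhoods of the origin; in full generality the statement as worded is the Schauder--Tychonoff theorem, and it is local convexity, not merely being a Hausdorff topological vector space, that is really used. Set $K := F(S)$, which is compact by hypothesis and satisfies $K \subseteq S$, since $S$ is convex and $F(S) \subseteq S$.

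First I would build the approximation device. Fix a convex neighbourhood $U$ of $0$. Compactness of $K$ gives total boundedness in the canonical uniformity of the space, so there are $y_1, \dots, y_n \in K$ with $K \subseteq \bigcup_{i=1}^{n}(y_i + U)$. Let $q_U$ be the Minkowski functional of $U$ and put $\lambda_i(x) := \max\{0,\, 1 - q_U(x - y_i)\}$; these are continuous, non-negative on $K$, and $\sum_i \lambda_i(x) > 0$ for every $x \in K$, because each such $x$ lies in some $y_i + U$. Define
\[ P_U(x) := \frac{\sum_{i=1}^{n} \lambda_i(x)\, y_i}{\sum_{i=1}^{n} \lambda_i(x)}, \qquad x \in K. \]
This is a continuous map from $K$ into the compact convex set $\Delta_U := \operatorname{conv}\{y_1,\dots,y_n\}$, and $\Delta_U \subseteq S$ because $S$ is convex and the $y_i$ lie in $S$. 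The key property is that $P_U(x) - x \in U$ for all $x \in K$: indeed $P_U(x)$ is a convex combination of those $y_i$ for which $\lambda_i(x)\neq 0$, each of which satisfies $y_i - x \in U$, and $U$ is convex.

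Next I would run Brouwer on each finite-dimensional slice and then pass to the limit. The composition $P_U \circ F$ maps $\Delta_U$ continuously into $\Delta_U$, since $F(\Delta_U) \subseteq F(S) = K$ is exactly the domain of $P_U$; as $\Delta_U$ is a compact convex subset of a finite-dimensional subspace, Brouwer's theorem produces $x_U \in \Delta_U$ with $P_U(F(x_U)) = x_U$, whence $x_U - F(x_U) = P_U(F(x_U)) - F(x_U) \in U$. Now order the convex neighbourhoods of $0$ by reverse inclusion, so that $(x_U)$ is a net. The points $F(x_U)$ all lie in the compact set $K$, so along a subnet $F(x_U) \to z$ for some $z \in K \subseteq S$; since $x_U - F(x_U) \in U$ and the neighbourhoods shrink, $x_U \to z$ along that subnet as well. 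Continuity of $F$ gives $F(x_U) \to F(z)$, and, the space being Hausdorff, $F(z) = z$, which is the desired fixed point.

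The only genuine obstacle is the first step: the Schauder projection relies on a partition-of-unity-type construction assembled from Minkowski functionals of convex neighbourhoods of $0$, and this is precisely where local convexity is indispensable — without convex neighbourhoods the construction, and indeed the conclusion, can fail. Since the spaces relevant to Theorem~\ref{thm:main} are normed, this difficulty does not arise, and the rest (continuity of the $\lambda_i$, the inclusion $P_U(x) - x \in U$, and the net/subnet limit) is routine bookkeeping.
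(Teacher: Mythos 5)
The paper does not prove this statement at all: it is quoted verbatim from Smart \cite[Theorem 2.3.7]{Smar74} and used as a black box, so there is no in-paper argument to compare against. Your proposal is the standard Schauder--Tychonoff proof (Schauder projection onto a finite-dimensional simplex, Brouwer on each slice, pass to a limit along the neighbourhood filter), and it is essentially correct. You are also right to flag the one real issue with the statement as worded: for a general Hausdorff topological vector space without local convexity the projection construction is unavailable, and the result in that generality is Cauty's theorem, far beyond this method. Since every space used in the paper (weighted total-variation norms on $\mathcal{M}_\beta$, the weak topology on $\mathcal{P}_\beta$) is locally convex, your proof covers everything the paper needs; the missing word ``locally convex'' is best read as an imprecision in the paper's statement rather than a gap in your argument.

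Two small technical refinements would tighten the write-up. First, take $U$ to be an \emph{open, balanced} convex neighbourhood of $0$ (such a base exists in any locally convex space). Openness guarantees that $x \in y_i + U$ gives the strict inequality $q_U(x - y_i) < 1$, so that $\lambda_i(x) > 0$ and $\sum_i \lambda_i(x) > 0$ on all of $K$; with merely $q_U(x - y_i) \le 1$ the partition of unity could vanish. Second, your key inclusion as written produces $P_U(x) - x \in -U$ (each contributing $y_i$ satisfies $x - y_i \in U$, hence $y_i - x \in -U$), so you need $U = -U$, which is exactly what balancedness provides. With those adjustments the Brouwer step and the net/subnet limit are correct as you describe them.
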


\begin{proof}[Proof of Theorem \ref{thm:main}]
We apply Schauder's fixed point theorem to the map $G$, the set $\mathscr{C}_{M,\beta}$ with $M$ large enough and $\beta$ small enough so that Propositions \ref{prop:Gdefn}, \ref{prop:preservedset} and \ref{prop:gcontinuous} are valid. We work in the topology of weak convergence of measure so that the set $\mathscr{C}_{M,\beta}$ is clearly convex and is compact thanks to Lemma \ref{lem:compact}. The function $G$ is well-defined thanks to Proposition \ref{prop:Gdefn}, continuous thanks to Proposition \ref{prop:gcontinuous}. This allows us to verify all the conditions of Schauder's fixed point theorem. Lastly we note that as we have shown that $G(\mu)$ has $\|G(\mu)\|_\beta \leq M$ (Proposition \ref{prop:Gdefn}) and is smooth thanks to Malliavin's theorem then this properties are also true for our steady state.
\end{proof}

\section{Conclusion and future research directions}\label{s:outlook}

In this paper we investigated the existence of steady states to a nonlinear Fokker-Planck equation, describing the evolution of player ratings competing in zero sum games. The existence result is based on Schauder's fixed point theorem and investigating the behaviour of a corresponding linear problem using hypercoercivity techniques. To this point we are not able to prove uniqueness or say anything about the trend to equilibrium. Hence the most natural next steps in understanding the long-time behaviour of this equation are:
\begin{itemize}
    \item \textbf{Uniqueness of the steady state}. It seems likely that for the equation studied here the steady state will be unique. However, it is challenging to prove. The kinetic structure means that we cannot view the steady state as the minimiser of a convex energy. The non-explicitness for the function $G(\mu)$ makes it challenging to work with. Furthermore, it is difficult to see how we could convert our fixed point argument into a contraction mapping argument.
    \item \textbf{Linear and non-linear stability of the steady state.} The linear stability of the steady state found in this paper seems to be a much more tractable problem. The numerical results presented in Section~\ref{s:numerics} suggest that the solution to the non-linear equation is converging exponentially fast towards its steady state in spaces weighted by $\phi_\beta.$ This is also close to a typical situation for the application of hypocoercivity theory. However, significant challenges remain. These are essentially the same as the those for the linear equation \eqref{eq:linearmain}. We are not able to use Harris's theorem for the linearised equation as Harris's theorem requires the preservation of positivity.
\end{itemize}

\section*{Acknowledgements} JE acknowledges partial support from the Leverhulme Trust, Grant ECF-2021-134. The author(s) would like to thank the Isaac Newton Institute for Mathematical Sciences, Cambridge, for support and hospitality during the programme \textit{Frontiers in Kinetic Theory: connecting microscopic and macroscopic scales} where work on this paper was undertaken. 
\bibliographystyle{abbrv}
\bibliography{ref}

\end{document}